\newtheorem{theorem}{Theorem}[section]
\newtheorem{lemma}[theorem]{Lemma}
\newtheorem{remark}[theorem]{Remark}
\newtheorem{proposition}[theorem]{Proposition}
\newtheorem{definition}[theorem]{Definition}
\numberwithin{equation}{section}
\newcommand\norm[1]{\lVert#1\rVert}
\newcommand\abs[1]{\lvert#1\rvert}
\newcommand\eps{\varepsilon}
\def\p{\partial}
\def\R{\mathbb R}
\newcommand{\RR}{\ensuremath{\mathbb{R}}}
\def\R{\mathbb R}
\def\RR{\mathbb R}
\begin{document}
\title{Group Actions on Monotone Skew-Product Semiflows
with Applications}
\author {
Feng Cao \\
Department of Mathematics\\
Nanjing University of Aeronautics and Astronautics
\\ Nanjing, Jiangsu, 210016, P. R. China\\
E-mail: fcao@nuaa.edu.cn
\\
\\
Mats Gyllenberg\\
Department of
Mathematics and Statistics\\
University of Helsinki, FIN-00014, Finland\\
E-mail: mats.gyllenberg@helsinki.fi
\\
\\
Yi Wang\thanks{Corresponding Author. Partially supported by NSF of China No.10971208, and
  the Finnish Center of Excellence
in Analysis and Dynamics.}\\
$^a$Department of Mathematics\\
 University of Science and Technology of China
\\ Hefei, Anhui, 230026, P. R. China
\\
$^b$Department of
Mathematics and Statistics\\
University of Helsinki, FIN-00014, Finland
\\
E-mail: wangyi@ustc.edu.cn\\
}
\date{}
\maketitle
\begin{abstract}
We discuss a general framework
of monotone skew-product semiflows under a connected group action. In a prior work, a
compact connected group $G$-action has been considered on a strongly monotone
skew-product semiflow. Here we relax the requirement of strong monotonicity of the skew-product
semiflows and the compactness of $G$,
and establish a theory concerning
symmetry or monotonicity properties of uniformly stable $1$-cover minimal sets. We then apply this
theory to show rotational
symmetry of certain stable entire solutions for a class of non-autonomous reaction-diffusion equations on $\RR^n$, as well as monotonicity of stable travelling
waves of some nonlinear diffusion equations in time recurrent structures including almost periodicity and
almost automorphy.
\end{abstract}

\section{{\large{\bf Introduction}}}
In this article, we investigate monotone skew-product semiflows with certain symmetry such as ones with respect to
rotation or translation. We will
restrict our attention to solutions which are `stable' in
a certain sense and discuss the relation between stability and
symmetry.

Historically, stability is in many cases known to imply some sort of
symmetry. For autonomous (or time-periodic) parabolic equations, any
stable equilibrium (or time-periodic) solution inherits the
rotational symmetry of the domain $\Omega$ (see \cite{CH,Ma} for
bounded domain and \cite{OM1,OM2} for unbounded domain). In
\cite{OM1,OM2}, the symmetry of the stable solutions was also
obtained for degenerate diffusion equations and systems of
reaction-diffusion equations. Ni et al.\cite{NPY} showed the
spatially symmetric or monotonic structure of stable solutions in
shadow systems as a limit of reaction-diffusion systems. It is now
well known that parabolic equations and systems admitting the
comparison principle define (strongly) monotone dynamical systems,
whose concept was introduced in \cite{Hi} (see \cite{HS,P4} for a
comprehensive survey on the development of this theory). If the
domain and the coefficients in such an equation or system exhibit a
symmetry, then the dynamical system commutes with the action of some
topological group $G$.  Extensions and generalizations of group
actions to a general framework of (strongly) monotone systems were
given by \cite{LW,MP,OM1,OM2,Ta1}.

Non-periodic and non-autonomous equations have been attracting more
attention recently. A unified framework to study non-autonomous
equations is based on the so-called skew-product semiflows (see
\cite{Se,ShYi}). In \cite{WY},  a
compact connected group $G$-action was considered on a strongly monotone
skew-product semiflow $\Pi_t$. Assuming that a minimal set $K$ of
$\Pi_t$ is stable, it was proved in \cite{WY} that $K$ is residually symmetric, and
moreover, any uniformly stable orbit is asymptotically symmetric. In
this article, motivated by Ogiwara and Matano \cite{OM1,OM2}, we
relax the restriction of strong monotonicity of the skew-product
semiflow $\Pi_t$, as well as the compactness of the acting group
$G$. To formulate our results precisely, we let $K$ be a uniformly
stable $1$-cover of the base flow. Under the assumption that $\Pi_t$
is only monotone and $G$ is only connected, we establish the
globally topological structure of the group orbit $GK$ of $K$, where
$GK=\{g\cdot(x,\omega):g\in G \mbox{ and } (x,\omega)\in K\}$ (see
Theorem B). Roughly speaking, the group orbit $GK$ either coincides
with $K$ (which entails that $K$ is $G$-symmetric); or otherwise,
$GK$ is a $1$-dimensional continuous subbundle on the base, while
each fibre of such bundle being totally ordered and homeomorphic to
$\R$. In particular, when the second case holds, the uniform stability of
$K$ will imply the asymptotic uniform stability (see Theorem D).

Our main theorems are extensions of symmetry results in \cite{OM1,OM2} on stable equilibria (resp. fixed points) for continuous-time (resp. discrete-time) monotone systems. This enables us to investigate the symmetry of certain stable entire solutions of
nonlinear reaction-diffusion equations in {\it time recurrent
structures} (see Definition \ref{recurrent}) on a symmetric domain.
This is satisfied, for instance, when the reaction term is a {\it uniformly almost periodic} or, more
generally,  a {\it  uniformly almost automorphic} function in $t$
(see Section 2 for more details).

Since strong monotonicity of the skew-product semiflow is weakened,
we are able to deal with the time-recurrent parabolic equation on an
unbounded symmetric domain such as the entire space $\RR^n$. For
non-autonomous parabolic equations, radial symmetry has been shown
to be a consequence of positivity of the solutions (see, e.g.
\cite{BS,HP,P2,P3} and references therein). For non-autonomous
parabolic equations on $\RR^n$, we also refer to a series of very
recent work by Pol\'{a}\v{c}ik \cite{P1,P2,P5} on this topic and its
applications. In particular, he \cite{P2} proved that, under some
symmetric conditions, any positive bounded entire solution decaying
to zero at spatial infinity uniformly with respect to time is
radially symmetric. However, as far as we know, symmetry properties
of certain  stable entire (possibly sign-changing) solutions of
non-autonomous parabolic equations on $\RR^n$ have been hardly
studied. By applying our abstract results mentioned above, we shall
initiate our research on this aspect. More precisely, we show that
(see Theorem \ref{theorem 4.6}) any uniformly stable entire solution
is radially symmetric, provided that it satisfies certain module
containment (see Definition \ref{modulef}) and decays to zero at
spatial infinity uniformly with respect to time.

Note also that we have relaxed the requirement of compactness of the
acting group $G$. This will allow one to discuss symmetry or
monotonicity properties with respect to translation group. Based on
this, one can investigate monotonicity of the uniformly-stable
traveling waves for time-recurrent bistable reaction-diffusion
equations or systems. Traveling waves in time-almost periodic
nonlinear evolution equations governed by bistable nonlinearities
were first established in a series of pioneer work by  Shen
\cite{Sh1}-\cite{Sh3}. In \cite{Sh1,Sh2}, she
 proved
 the existence of such almost-periodic traveling waves, and showed that any such monotone traveling wave is
uniformly-stable. By using our abstract results, on the other hand, we give a converse theorem (see Theorem \ref{theorem 4.1})
  to that of Shen's, i.e., any uniformly-stable almost-periodic traveling wave is monotone. Moreover, we shall also
  show that any uniformly-stable almost-periodic traveling wave is uniformly stable with asymptotic phase (see Theorem \ref{theorem 4.2}). The same result as Theorem \ref{theorem 4.2} can also be found in
Shen \cite{Sh1}. But our approach (by Theorem D) was introduced in a very general framework,
 and hence, it can be applied in a rather general context and to wider classes of equations
with little modification.

This paper is organized as follows.  In section 2, we present some
basic concepts and preliminary results in the theory of skew-product
semiflows and almost periodic (automorphic) functions which will be
important to our proofs. We state our main results in Section 3,
where we also give standing assumptions characterizing our general
framework. Sections 4-6 contain the proofs of our main results. In
section 7, we apply our abstract theorems to obtain symmetry
properties of certain stable entire (possibly sign-changing)
solutions of non-autonomous parabolic equations on $\RR^n$, as well
as the monotonicity of stable almost-periodic traveling waves for
time-recurrent reaction-diffusion equations.

\section{Notation and preliminary results}
In this section, we summarize some preliminary materials to be used
in later sections. First,  we summarize some lifting properties of
compact dynamical systems. We then collect definitions and basic
facts concerning monotone skew-product semiflows and
order-preserving group actions. Finally, we give a brief review
about uniformly almost periodic (automorphic) functions and flows.

Let $\Omega$ be a compact metric space with metric $d_{\Omega}$, and
$\sigma: \Omega\times \mathbb{R} \rightarrow \Omega$ be a continuous flow on $\Omega$, denoted by
$(\Omega,\sigma)$ or $(\Omega,\mathbb{R})$. As has become customary, we
denote the value of $\sigma$ at $(\omega,t)$ alternatively by $\sigma_t(\omega)$
 or $\omega\cdot t$.  By definition, $\sigma_0(\omega)=\omega$  and
$\sigma_{t+s}(\omega)=\sigma_t(\sigma_s(\omega))$ for all $t,s\in
\mathbb{R}$ and $\omega\in \Omega$. A subset $S\subset \Omega$ is
\emph{invariant} if $\sigma_{t}(S)=S$ for every $t\in \mathbb{R}$. A
non-empty compact invariant set $S\subset \Omega$ is called
\emph{minimal} if it contains no non-empty, proper and invariant
subset. We say that the continuous flow $(\Omega,\mathbb{R})$ is
\emph{minimal} if $\Omega$ itself is a minimal set.
Let $(Z,\mathbb{R})$ be another continuous flow. A continuous map
$p: Z\rightarrow \Omega$ is called a \emph{flow homomorphism} if
$p(z\cdot t)=p(z)\cdot t$ for all $z\in Z$ and $t \in \mathbb{R}$. A
flow homomorphism which is onto is called a {\it flow epimorphism}
and a one-to-one flow epimorphism is referred as {\it a flow
isomorphism}. We note that a homomorphism of minimal flows is
already an epimorphism.

We say that a Banach space $(V,\norm{\cdot})$ is {\it ordered} if it
contains a closed convex cone, that is, a non-empty closed subset
$V_+\subset V$ satisfying $V_++V_+\subset V_+$, $\alpha V_+\subset
V_+$ for all $\alpha \geq 0$, and $V_+\cap (-V_+)=\{0\}$.  The cone
$V_+$ induces an \emph{ordering} on $V$ via $x_{1}\leq x_{2}$ if
$x_{2}- x_{1}\in V_+$. We write $x_{1}< x_{2}$ if $x_{2}- x_{1}\in
V_+\setminus \{0\}$.
Given $x_1,x_2\in V$, the set $[x_1,x_2]=\{x\in V:x_1\le x\le x_2\}$
is called a {\it closed order interval} in $V$, and we write
$(x_1,x_2)=\{x\in V:x_1< x< x_2\}$.

 A subset $U$ of $V$ is said to be {\it
order convex} if for any $a, b\in U$ with $a<b$, the segment
$\{a+s(b-a): s\in [0,1]\}$ is contained in $U$. And $U$ is called
{\it lower-bounded} (resp. {\it upper-bounded}) if there exists an
element $a\in V$ such that $a\leq U$ (resp. $a\geq U$). Such an $a$ is
said to be a {\it lower bound} (resp. {\it upper bound}) for $U$.
 A lower bound $a_0$ is said to be the {\it greatest
lower bound} (g.l.b.), if any other lower bound $a$ satisfies $a\leq
a_0$. Similarly, we can define the {\it least upper bound} (l.u.b.).

\vskip 2mm
Let $X=[a,b]_V$ with $a\ll b$ ($a,b\in V$) or $X=V_+$, or
furthermore, $X$ be a closed order convex subset of $V$. Throughout this paper,
we always assume that, for any $u,v\in X$, the
greatest lower bound of $\{u,v\}$, denoted by $u\wedge v$, exists
and that $(u,v)\mapsto u\wedge v$ is a continuous mapping from
$X\times X$ into $X$.

 Let $\mathbb{R}^{+}= \{t\in\mathbb{R}: t\geq 0 \}$. We
consider a continuous \emph{skew-product semiflow} $\Pi:
\mathbb{R}^{+}\times X\times \Omega  \rightarrow X\times \Omega$
defined by
\begin{equation}\label{s-p flow}
\Pi_{t}(x,\omega)=\left(u(t,x,\omega),\omega\cdot t\right),
\quad\forall (t,x,\omega)\in \mathbb{R}^{+}\times X\times \Omega,
\end{equation}
satisfying (1) $\Pi_{0}= \mathrm{Id}$; (2) the \emph{cocycle
property}: $u(t+s,x,\omega)=u\left(s,u(t,x,\omega),\omega\cdot
t\right)$, for each $(x,\omega)\in X\times \Omega$ and $s,t \in
\mathbb{R}^{+}$. A subset $A\subset X\times \Omega$ is
\emph{positively invariant} if $\Pi_{t}(A)\subset A$ for all $t\in
\mathbb{R}^{+}$; and {\it totally invariant} if $\Pi_{t}(A)=A$ for
all $t\in \mathbb{R}^{+}$. The \emph{forward orbit} of any
$(x,\omega)\in X\times \Omega$ is defined by $O^+(x,\omega)=
\left\{\Pi_{t}(x,\omega): t\geq 0 \right\}$, and the
\emph{omega-limit set} of $(x,\omega)$ is defined by
$\mathcal{O}(x,\omega)=\{(\hat{x},\hat{\omega}) \in X\times \Omega :
\Pi_{t_{n}}(x,\omega)\rightarrow (\hat{x},\hat{\omega})~
(n\rightarrow\infty) \text{\ for some sequence\ } t_{n}\rightarrow
\infty \}$. Clearly, if a forward orbit $O^+(x,\omega)$ is
relatively compact, then the omega-limit set $\mathcal{O}(x,\omega)$ is a
nonempty, compact and totally invariant subset in $X\times \Omega$
for $\Pi_t$.


Let $P:X\times \Omega\to \Omega$ be the natural projection. A
compact positively invariant set $K\subset X\times \Omega$ is called
a {\it $1$-cover} of the base flow if $P^{-1}(\omega)\cap K$
contains a unique element for every $\omega\in \Omega$. In this
case, we denote the unique element of $P^{-1}(\omega)\cap K$ by
$(c(\omega),\omega)$ and  write
$K=\{(c(\omega),\omega):\omega\in\Omega\}$, where
$c:\Omega\rightarrow X$ is continuous with
$$\Pi_t(c(\omega),\omega)=(c(\omega\cdot t),\omega\cdot t),~\forall
t\geq0,$$ and hence, $K\cap P^{-1}(\omega)=\{(c(\omega),\omega)\}$
for every $\omega\in \Omega$.

Next, we introduce some definition concerning the stability of the
skew-product semiflow $\Pi_t$. A forward orbit $O^+(x_0,\omega_0)$ of $\Pi_t$ is said to be
{\it uniformly stable} if for every $\varepsilon>0$ there is a
$\delta=\delta(\varepsilon)>0$ such that if $s\ge 0$ and
$\norm{u(s,x_0,\omega_0)-x}\le \delta(\varepsilon)$ for certain $x\in X$, then for
each $t\ge 0,$ $\norm{u(t+s,x_0,\omega_0)-u(t,x,\omega_0\cdot s)}<\varepsilon.$ The following definition is on
the uniform stability for a compact positively invariant set $K\subset X\times \Omega$:

\begin{definition}[Uniform stability for $K$]\label{stability}
A compact positively invariant set $K$ is said to be {\it uniformly
stable} if for any $\varepsilon>0$ there exists a
$\delta(\varepsilon)>0$, called the {\it modulus of uniform
stability}, such that, if $(x,\omega)\in K,~(y,\omega)\in
X\times\Omega$ are such that $\norm{x-y}\le \delta(\varepsilon)$,
then
$$\norm{u(t,x,\omega)-u(t,y,\omega)}<\varepsilon \textnormal{ for
all }t\ge 0.$$
\end{definition}

\begin{remark}\label{remark2.2}
\textnormal{It is easy to be expected that all the trajectories in a uniformly
stable set are uniformly stable. Conversely, if a trajectory has
uniformly stable property, its omega-limit set inherits it: that is,
if $O^+(x_0,\omega_0)$ is relatively compact and uniformly stable,
then the omega-limit set $\mathcal{O}(x_0,\omega_0)$ is a uniformly
stable set with
 the same modulus of uniform
stability as that of $O^+(x_0,\omega_0)$ (see \cite{NOS,Se}).}
\end{remark}

The following Lemma is due to Novo et al \cite[Proposition 3.6]{NOS}:

\begin{lemma}\label{Novo-Ob}
Assume that $(\Omega,\mathbb{R})$ is minimal. Let $O^+(x,\omega)$ be
a forward orbit of $\Pi_t$ which is relatively compact. If its
omega-limit set $\mathcal{O}(x,\omega)$ contains a minimal set $K$
which is uniformly stable, then $\mathcal{O}(x,\omega)=K$.
\end{lemma}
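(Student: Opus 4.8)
The plan is to show $\mathcal{O}(x,\omega) = K$ by exploiting the uniform stability of $K$ together with the minimality of the base flow $(\Omega,\mathbb{R})$. First I would note that, since $\mathcal{O}(x,\omega)$ is compact and totally invariant and $K \subset \mathcal{O}(x,\omega)$ is a $1$-cover-type object sitting over all of $\Omega$ (minimality of $\Omega$ forces the projection $P(K)$ to be all of $\Omega$), it suffices to prove that for each $\omega' \in \Omega$ the fibre $\mathcal{O}(x,\omega) \cap P^{-1}(\omega')$ reduces to the single point of $K$ over $\omega'$. The key mechanism is a contraction-towards-$K$ argument: given $\varepsilon > 0$, pick the modulus $\delta(\varepsilon)$ from Definition \ref{stability}. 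Along the forward orbit $O^+(x,\omega)$ one can find a sequence $t_n \to \infty$ with $\Pi_{t_n}(x,\omega) \to (z,\omega^*)$ for some $(z,\omega^*) \in \mathcal{O}(x,\omega)$; the aim is to show that $(z,\omega^*)$ must lie on $K$.

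The heart of the argument is the following. Fix any point $(\hat x, \hat \omega) \in \mathcal{O}(x,\omega)$; I want $\hat x = c(\hat\omega)$. Because $\mathcal{O}(x,\omega)$ is the omega-limit set and the orbit $O^+(x,\omega)$ is relatively compact, one can approximate: there exist $s_n \to \infty$ with $u(s_n,x,\omega) \to \hat x$ and $\omega \cdot s_n \to \hat\omega$. Now compare the point $u(s_n, x, \omega)$ with the point $c(\omega \cdot s_n)$ of $K$ lying in the same fibre. If we knew $\| u(s_n,x,\omega) - c(\omega\cdot s_n)\| \le \delta(\varepsilon)$ for some $n$, then uniform stability of $K$ (applied with the base point $(c(\omega\cdot s_n), \omega \cdot s_n) \in K$) would give $\| u(t + s_n, x,\omega) - c(\omega \cdot (t+s_n))\| < \varepsilon$ for all $t \ge 0$, i.e. the whole tail of the orbit stays within $\varepsilon$ of $K$; letting $t$ vary and $n \to \infty$ would then squeeze all of $\mathcal{O}(x,\omega)$ into the $\varepsilon$-neighbourhood of $K$, and since $\varepsilon$ is arbitrary we would get $\mathcal{O}(x,\omega) \subset K$, hence equality by minimality of $K$. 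So the real task is to produce, for each $\varepsilon > 0$, a time $s$ with $\| u(s,x,\omega) - c(\omega \cdot s)\| \le \delta(\varepsilon)$.

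To get such a time, I would use that $K$ is \emph{minimal} and contained in $\mathcal{O}(x,\omega)$: the orbit closure picks up points of $K$. Concretely, since $K \subset \mathcal{O}(x,\omega)$, there is a point of $K$, say $(c(\omega_0),\omega_0)$, that is an omega-limit point of $(x,\omega)$; so there are times $r_n \to \infty$ with $u(r_n,x,\omega) \to c(\omega_0)$ and $\omega \cdot r_n \to \omega_0$. By continuity of $c$, for $n$ large we get $\| u(r_n, x,\omega) - c(\omega \cdot r_n)\| \le \| u(r_n,x,\omega) - c(\omega_0)\| + \|c(\omega_0) - c(\omega \cdot r_n)\| \le \delta(\varepsilon)$. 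That is exactly the needed time $s = r_n$. Combining this with the squeezing step above yields $\mathcal{O}(x,\omega) \subseteq \overline{B_\varepsilon(K)}$ for every $\varepsilon$, hence $\mathcal{O}(x,\omega) \subseteq K$, and since $K$ is a nonempty invariant subset of the invariant set $\mathcal{O}(x,\omega)$ while $K$ is minimal, in fact one concludes $\mathcal{O}(x,\omega) = K$.

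The main obstacle I anticipate is the bookkeeping in the squeezing step: one must be careful that the estimate $\| u(t+s,x,\omega) - c(\omega\cdot(t+s))\| < \varepsilon$ — valid for all $t \ge 0$ once $s = r_n$ is chosen — genuinely covers \emph{all} of $\mathcal{O}(x,\omega)$, which requires observing that any omega-limit point is a limit of $\Pi_{t_k + r_n}(x,\omega)$ for suitable $t_k \to \infty$ (with $n$ fixed) and passing to the limit inside the inequality, using continuity of $u$, $c$ and the metric. A secondary subtlety is invoking relative compactness of $O^+(x,\omega)$ at the right moments to guarantee the limit points exist and lie in $\mathcal{O}(x,\omega)$; but all of these are standard once the comparison-with-$K$ idea is in place, so I expect no essential difficulty beyond this.
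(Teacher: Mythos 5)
The paper does not prove this lemma at all: it is quoted verbatim from Novo--Obaya--Sanz \cite[Proposition 3.6]{NOS}, so there is no in-paper argument to compare against. Your overall strategy is the standard one and is sound in outline: locate a time $s$ at which the orbit is within the stability modulus $\delta(\varepsilon)$ of $K$ \emph{inside the fibre} $P^{-1}(\omega\cdot s)$, use uniform stability to trap the entire tail of the orbit in the $\varepsilon$-neighbourhood of $K$, pass to the limit to get $\mathcal{O}(x,\omega)\subset K$, and conclude equality from $K\subset\mathcal{O}(x,\omega)$. The squeezing step and the final passage to the limit are handled correctly.

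There is, however, a genuine gap at the one step where all the work is concentrated. The lemma assumes only that $K$ is a uniformly stable \emph{minimal} set, but from the first paragraph onward you treat $K$ as a $1$-cover with a continuous section $c:\Omega\to X$ (``the single point of $K$ over $\omega'$'', ``the point $c(\omega\cdot s_n)$ of $K$'', ``by continuity of $c$''). Minimality of $\Omega$ does give $P(K)=\Omega$, but it does not make the fibres $K\cap P^{-1}(\omega')$ singletons, and without that there is no section $c$. This matters precisely where you produce the time $s=r_n$: you know $u(r_n,x,\omega)\to k_0\in K\cap P^{-1}(\omega_0)$ and $\omega\cdot r_n\to\omega_0$, but Definition \ref{stability} requires a comparison point of $K$ in the \emph{same} fibre $P^{-1}(\omega\cdot r_n)$, and for a general compact minimal set the fibre map $\omega'\mapsto K\cap P^{-1}(\omega')$ is only upper semicontinuous --- there need not be points of $K\cap P^{-1}(\omega\cdot r_n)$ converging to $k_0$. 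Supplying such points is exactly the nontrivial content of the cited result; it has to be extracted from uniform stability itself (e.g.\ via the equicontinuity, uniform in $t$, of the maps $y\mapsto u(t,y,\omega')$ restricted near $K$, as in \cite{NOS}), not assumed. Your argument is therefore a correct proof of the special case in which $K$ is a uniformly stable $1$-cover --- which happens to be the only case invoked in this paper, e.g.\ in the proof of Lemma \ref{lemma3.11} and of Theorem B --- but it does not prove the lemma as stated.
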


\vskip 1mm For skew-product semiflows, we always use the order
relation on each fiber $P^{-1}(\omega)$. We write
$(x_1,\omega)\le_\omega (<_\omega)\, (x_2,\omega)$ if $x_1\le x_2$
($x_1<x_2$). Without any confusion, we will drop the subscript
``$\omega$". One can also define similar definitions and notations
in $P^{-1}(\omega)$ as in $X$, such as order-intervals, the greatest
lower bound, the least upper bound, etc.

Let $A,B$ be two compact subsets of $X$. We define their Hausdorff
metric $$d_H(A, B)=\max\{\sup_{x\in A}d(x,B),\sup_{y\in
B}d(y,A)\},$$ where $d(x,B)=\inf\limits_{y\in B}\norm{x-y}$. We can also define
the Hausdorff metric $d_{H,\omega}(A(\omega), B(\omega))$ for any two compact subset $A(\omega)$, $B(\omega)$
of $P^{-1}(\omega)$. Again without any confusion, we drop the subscript
``$\omega$" and write $d_{H,\omega}(A(\omega), B(\omega))$ as $d_{H}(A(\omega), B(\omega))$ in the context.

Let
$K_1, K_2$ be two positively invariant compact subsets of $X\times
\Omega$. We write $K_1\prec_r K_2$ if and only if for any
$(x,\omega)\in K_1$, there exists some $(y,\omega)\in K_2$ such that
$(x,\omega)<_r (y,\omega)$, and for any $(y,\omega)\in K_2$, there
exists some $(x,\omega)\in K_1$ such that $(x,\omega)<_r
(y,\omega)$, where $\prec_r$ (resp. $<_r$) represents $\preceq$
(resp. $\leq$) or $\prec$ (resp. $<$). $K_1\succ_r K_2$ is similarly
defined.
For such $K_1,K_2\subset X\times \Omega$, the Hausdorff
distance between $K_1$ and $K_2$ is defined as
$$d(K_1,K_2)=\sup_{\omega\in \Omega}d_H(K_1(\omega),K_2(\omega)),$$ where $d_H$ is the
Hausdorff metric for compact subsets in $P^{-1}(\omega)$.

\begin{definition}\label{monotone}
{\upshape The skew-product semiflow $\Pi$ is {\it monotone} if
$$\Pi_{t}(x_1,\omega)\leq \Pi_{t}(x_2,\omega)$$ whenever $(x_1,\omega)\leq (x_2,\omega)$
and $t\geq0$.}
\end{definition}

Let $G$ be a metrizable topological group with unit element $e$. We
say that $G$ acts on the ordered space $X$ if there exists a
continuous mapping $\gamma:G\times X\rightarrow X$ such that
$a\mapsto\gamma(a,\cdot)$ is a group homomorphism of $G$ into
Hom($X$), the group of homeomorphisms of $X$ onto itself. For
brevity, we write $\gamma(a,x)=ax$ for $x\in X$ and identify the
element $a\in G$ with its action $\gamma(a,\cdot)$. A group action
$\gamma$ is said to be {\it order-preserving} if, for each $a\in G$,
the mapping $\gamma(a,\cdot): X\rightarrow X$ is increasing, i.e.
$x_1\leq x_2$ in $X$ implies $ax_1\leq ax_2$. We say that $\gamma$
{\it commutes} with the skew-product semiflow $\Pi$ if
$$au(t,x,\omega)=u(t,ax,\omega), \mbox{~for any~}(x,\omega)\in X\times \Omega,t\geq 0\mbox{ and }a\in G.$$
For $x\in X$ the {\it group orbit} of $x$ is the set $Gx=\{ax:a\in G\}$. A
point $(x,\omega)\in X\times \Omega$ is said to be {\it symmetric}
if $(Gx,\omega)=\{(x,\omega)\}$.

Due to the commutative property of $G$ with $\Pi_t$, one has the following direct lemma:
\begin{lemma}\label{lemma3.6} For any $(x_0,\omega_0)\in
X\times\Omega$ and $g\in G$, its omega-limit set
$\mathcal{O}(x_0,\omega_0)$ satisfies
$$g\mathcal{O}(x_0,\omega_0)=\mathcal{O}(gx_0,\omega_0),$$ where
$g\mathcal{O}(x_0,\omega_0)=\{(gx,\omega):(x,\omega)\in\mathcal{O}(x_0,\omega_0),\omega\in\Omega\}.$
\end{lemma}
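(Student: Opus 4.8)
The plan is to verify the two inclusions $g\mathcal{O}(x_0,\omega_0)\subseteq \mathcal{O}(gx_0,\omega_0)$ and $\mathcal{O}(gx_0,\omega_0)\subseteq g\mathcal{O}(x_0,\omega_0)$ directly from the definition of the omega-limit set, using only that $\gamma(g,\cdot)$ acts as a homeomorphism of $X$ that leaves the base point $\omega$ untouched and that this action commutes with $\Pi_t$.

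First I would take a point $(g\hat x,\hat\omega)\in g\mathcal{O}(x_0,\omega_0)$ with $(\hat x,\hat\omega)\in\mathcal{O}(x_0,\omega_0)$, so that there is a sequence $t_n\to\infty$ with $u(t_n,x_0,\omega_0)\to\hat x$ and $\omega_0\cdot t_n\to\hat\omega$. Since $\gamma(g,\cdot):X\to X$ is continuous, $g\,u(t_n,x_0,\omega_0)\to g\hat x$; and since $\gamma$ commutes with $\Pi$, $g\,u(t_n,x_0,\omega_0)=u(t_n,gx_0,\omega_0)$. As the base component $\omega_0\cdot t_n\to\hat\omega$ is unaffected by the $G$-action, we get $\Pi_{t_n}(gx_0,\omega_0)=(u(t_n,gx_0,\omega_0),\omega_0\cdot t_n)\to(g\hat x,\hat\omega)$, whence $(g\hat x,\hat\omega)\in\mathcal{O}(gx_0,\omega_0)$. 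This establishes the first inclusion.

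For the reverse inclusion, I would apply the inclusion just proved with $g$ replaced by $g^{-1}\in G$ (recall that $a\mapsto\gamma(a,\cdot)$ is a group homomorphism into $\mathrm{Hom}(X)$, so $g^{-1}$ again acts as a homeomorphism of $X$ and still commutes with $\Pi$) and with $x_0$ replaced by $gx_0$; this yields $g^{-1}\mathcal{O}(gx_0,\omega_0)\subseteq\mathcal{O}(g^{-1}gx_0,\omega_0)=\mathcal{O}(x_0,\omega_0)$. Applying the homeomorphism $g$ to both sides gives $\mathcal{O}(gx_0,\omega_0)\subseteq g\mathcal{O}(x_0,\omega_0)$, and combining the two inclusions completes the proof.

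There is essentially no serious obstacle here; the lemma is a bookkeeping consequence of the commutation relation. The only points requiring minor care are that the $G$-action is fibrewise and fixes the $\Omega$-coordinate, so convergence in the base is transported unchanged, and that each $g$ is invertible within $\mathrm{Hom}(X)$, which is precisely what allows the argument to be run symmetrically; note also that if $O^+(x_0,\omega_0)$ fails to be relatively compact both omega-limit sets may be empty, in which case the identity holds trivially.
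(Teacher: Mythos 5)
Your proof is correct and follows essentially the same route as the paper's: the forward inclusion $g\mathcal{O}(x_0,\omega_0)\subseteq\mathcal{O}(gx_0,\omega_0)$ is obtained by the identical sequence argument using the commutation $g\,u(t_n,x_0,\omega_0)=u(t_n,gx_0,\omega_0)$, the continuity of the action, and the fact that the base coordinate is untouched. The only (harmless and slightly cleaner) difference is in the reverse inclusion, where the paper reruns the sequence argument for a point of $\mathcal{O}(gx_0,\omega_0)$ and extracts the limit $x$ with $y=gx$ (implicitly via continuity of $g^{-1}$), whereas you deduce it formally by applying the already-proved inclusion with $g^{-1}$ in place of $g$ and $gx_0$ in place of $x_0$ and then acting by $g$.
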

\begin{proof} Fix any $\omega\in\Omega$. Then for any $(x,\omega)\in\mathcal{O}(x_0,\omega_0)$, there
exists a sequence $\{t_n\}\rightarrow\infty$ such that
$\Pi_{t_n}(x_0,\omega_0)=(u(t_n,x_0,\omega_0),\omega_0\cdot t_n)\rightarrow(x,\omega)$
 as $n\rightarrow\infty.$
So for any $g\in G$, we have
$u(t_n,gx_0,\omega_0)=gu(t_n,x_0,\omega_0)\rightarrow gx$ as
$n\rightarrow\infty,$ and hence
$(gx,\omega)\in\mathcal{O}(gx_0,\omega_0)\cap P^{-1}(\omega).$ Therefore,
$g\mathcal{O}(x_0,\omega_0)\subset\mathcal{O}(gx_0,\omega_0).$

Conversely, for any $(y,\omega)\in\mathcal{O}(gx_0,\omega_0)$,
choose a sequence $\{s_n\}\rightarrow\infty$ such that
$\Pi_{s_n}(gx_0,\omega_0)=(u(s_n,gx_0,\omega_0),\omega_0\cdot
s_n)\rightarrow(y,\omega)$ as $n\rightarrow\infty.$ Thus,
$gu(s_n,x_0,\omega_0)=u(s_n,gx_0,\omega_0)\rightarrow y$ as
$n\rightarrow\infty$. Without loss of generality, we may assume that
$u(s_n,x_0,\omega_0)\rightarrow x$ as $n\rightarrow\infty$.
Therefore, $(x,\omega)\in\mathcal{O}(x_0,\omega_0)~\mbox{ and
}y=gx,$ which implies that $(y,\omega)\in
g\mathcal{O}(x_0,\omega_0).$ So we have proved
$\mathcal{O}(gx_0,\omega_0)\subset g\mathcal{O}(x_0,\omega_0).$ By
the arbitrariness of $\omega\in\Omega$, we directly derive the
result.\end{proof}

We finish this section with the definitions of almost periodic
(automorphic) functions and flows.

A function $f\in C(\R,\R^n)$ is {\it almost periodic} if, for any
$\varepsilon>0$, the set
$T(\varepsilon):=\{\tau:\abs{f(t+\tau)-f(t)}<\varepsilon,\,\forall
t\in \RR\}$ is relatively dense in $\RR$. $f$ is {\it almost
automorphic} if for any $\{t_n'\}\subset \RR$ there is a subsequence
$\{t_n\}$ and a function $g:\RR\to\RR^{n}$ such that $f(t+t_n)\to
g(t)$ and $g(t-t_n)\to f(t)$ hold pointwise.

 Let $D$ be a subset of
$\RR^m$. A continuous function $f:\RR\times D\to\RR^n;(t,u)\mapsto
f(t,u),$ is said to be {\it admissible} if $f(t,u)$ is bounded and
uniformly continuous on $\RR\times K$ for any compact subset
$K\subset D$.  A function $f\in C(\RR\times D,\RR^n)(D\subset
\RR^m)$ is {\it uniformly almost periodic (automorphic) in $t$}, if
$f$ is both admissible and almost periodic (automorphic) in $t\in
\RR$.

Let $f\in C(\RR\times D,\RR^n) (D\subset \RR^m)$ be admissible. Then
$H(f)={\rm cl}\{f\cdot\tau:\tau\in \RR\}$ is called the {\it hull of
$f$}, where $f\cdot\tau(t,\cdot)=f(t+\tau,\cdot)$ and the closure is
taken under the compact open topology. Moreover, $H(f)$ is compact
and metrizable under the compact open topology. The time translation
$g\cdot t$ of $g\in H(f)$ induces a natural flow on $H(f)$.
\begin{definition}\label{recurrent}
{\rm An admissible function $f\in C(\RR\times D,\RR^n)$ is called
{\it time recurrent} if $H(f)$ is minimal.}
\end{definition}
$H(f)$ is always minimal if $f$ is uniformly almost periodic
(automorphic) in $t$. Moreover, $H(f)$ is an almost periodic
(automorphic) minimal flow when $f$ is a uniformly almost periodic
(automorphic) function in $t$ (see, e.g. \cite{Se,ShYi}).

Let $f\in C(\RR\times D,\RR^n)$ be uniformly almost periodic (automorphic),
and
\begin{equation}\label{hull}
f(t,x)\sim \sum_{\lambda\in\RR}a_\lambda(x)e^{i\lambda t}
\end{equation}
 be a Fourier series of $f$ (see \cite{ShYi,Ve} for the definition and the
existence of Fourier series). Then
$\mathcal{S}=\{\lambda:a_{\lambda}(x)\not\equiv 0\}$ is called the
Fourier spectrum of $f$ associated to the Fourier series
(\ref{hull}).
\begin{definition}\label{modulef}
 $\mathcal{M}(f)=$ the smallest additive subgroup
of $\RR$ containing $\mathcal{S}(f)$ is called the {\it frequency
module of $f$}.
\end{definition}
 Let $f,g\in C(\RR\times \RR^n,\RR^n)$ be two uniformly almost
periodic (automorphic) functions in $t$. We have the module
containment $\mathcal{M}(f)\subset \mathcal{M}(g)$ if and only if
there exists a flow epimorphism from $H(g)$ to $H(f)$ (see,
\cite{Fi} or \cite[Section 1.3.4]{ShYi}). In particular,
$\mathcal{M}(f)=\mathcal{M}(g)$ if and only if the flow $(H(g),\R)$
is isomorphic to the flow $(H(f),\R)$.

\section{Main results}

In this section our standing assumptions are as follows: \vskip 2mm
{\bf (A1)} $\Omega$ is minimal;

{\bf (A2)} $G$ is a connected group acting on $X$ in such a way that
its action is order-preserving;

{\bf (A3)} $G$ commutes with the monotone skew-product semiflow
$\Pi_t$.  \vskip 2mm

In what follows we will denote by
$K$ a minimal set of $\Pi_t$ in $X\times \Omega$, which is a uniformly stable
1-cover of $\Omega$. In the
 context, we also write $K=\{(\bar{u}_\omega,\omega):\omega\in \Omega\}$, and $gK=\{(g\bar{u}_\omega,\omega):\omega\in
\Omega\}$ if an element $g\in G$ acts on $K$. The {\it group orbit of $K$} is defined as $$GK=\{(g\bar{u}_\omega,\omega)\in X\times \Omega:g\in G\mbox{ and }\omega\in \Omega\}.$$
We will investigate the topological structure of $GK$ in this paper.

For $\delta>0$, we define a $\delta$-neighborhood of $K$ in $X\times \Omega$:
$$B_{\delta}(K)=\{(u,\omega)\in X\times \Omega:\norm{u-\bar{u}_\omega}<\delta\}.$$
 Hereafter, we impose the following additional condition on $K$:
 \vskip 3mm

{\bf (A4)} There exists a $\delta>0$ such that
\begin{description}
\item[{\rm (i)}] the forward orbit
$O^+(x_0,\omega_0)$ is relatively compact for any $(x_0,\omega_0)\in B_{\delta}(K)$; and moreover,
\item[{\rm (ii)}] if the $\omega$-limit set
$\mathcal{O}(x_0,\omega_0)\subset B_\delta(K)$ and $\mathcal{O}(x_0,\omega_0)\prec
hK$ (resp. $\mathcal{O}(x_0,\omega_0)\succ hK$) for some $h\in G$,  then there is
a neighborhood $B(e)\subset G$ of $e$ such that
$\mathcal{O}(x_0,\omega_0)\prec ghK$ (resp. $\mathcal{O}(x_0,\omega_0)\succ
ghK$) for any $g\in B(e)$.
\end{description}

\begin{remark}
\textnormal{In the case where $\Pi_t$ is strongly monotone, (A4-ii) is
automatically satisfied. Recall that $\Pi_t$ is strongly monotone if
$\Pi_t(x_1,\omega)\ll \Pi_t(x_2,\omega)$ whenever
$(x_1,\omega)<(x_2,\omega)$ and $t>0$ (see \cite{ShYi}). To derive
(ii) of (A4), note that the total invariance of
$\mathcal{O}(x_0,\omega_0)$ implies that, for any $(x,\omega)\in
\mathcal{O}(x_0,\omega_0)$, there exists a neighborhood
$B_{(x,\omega)}(e)\subset G$ of $e$ such that $(x,\omega)\prec ghK$
for any $g\in B_{(x,\omega)}(e)$. Considering that
$\mathcal{O}(x_0,\omega_0)$ is compact, one can find a neighborhood
$B(e)\subset G$ such that $\mathcal{O}(x_0,\omega_0)\prec ghK$ for
any $g\in B(e)$.}
\end{remark}
\begin{remark}
\textnormal{For continuous-time (discrete-time) monotone systems, assumption
(A4) was first imposed by Ogiwara and Matano \cite{OM1,OM2} to
investigate the monotonicity and convergence of the stable
equilibria (fixed points). We here give a general version in
non-autonomous cases. At first glance,
 one can observe that (A4) is just a local dynamical hypothesis nearby $K$. Accordingly, it should only yield a local total-ordering property of the group orbit $GK$ nearby $K$ (see Lemma A below). However, in what follows, we can see that it will surprisingly imply a globally topological characteristic of the whole group orbit $GK$ (see Theorem B below), which is our main result in this paper.}
\end{remark}

\noindent {\bf Lemma A}~(Local ordering-property of $GK$ nearby
$K$){\bf .} {\it Assume that {\rm (A1)-(A3)} hold. Let $K$ be a
uniformly stable $1$-cover of $\Omega$ and satisfies {\rm (A4)}.
Then there exists a neighborhood $B(e)\subset G$ of $e$ such that
$gK\preceq K\mbox{ or }gK\succeq K$, for any $g\in B(e)$.} \vskip
2mm

\noindent{\bf Theorem B}~(Global topological structure of $GK$){\bf .}
{\it Assume that {\rm (A1)-(A3)} hold and $G$
is locally compact. Let $K$ be a uniformly stable $1$-cover of $\Omega$ and satisfies {\rm (A4)}.
Then either of the following alternatives holds:
\begin{description}
\item[\rm (i)]  $GK=K$, i.e., $K$ is $G$-symmetric;

\item[\rm (ii)] There is a continuous bijective mapping $H: \Omega\times
\RR\to GK$ satisfying:

{\rm (a)} For each $\alpha\in \RR$, $H(\Omega,\alpha)=gK$
for some $g\in G$;

{\rm (b)} For each $\omega\in \Omega$, $H(\omega,\RR)=G\bar{u}_\omega$;

{\rm (c)} $H$ is
strictly order-preserving with respect to $\alpha\in \RR$, i.e.,
$$H(\omega,\alpha_1)\ll H(\omega,\alpha_2)$$ for any
 $\omega\in \Omega$ and any $\alpha_1,\alpha_2\in \RR$ with
 $\alpha_1<\alpha_2$.
 \end{description}}
\vskip 3mm
\begin{remark}
\textnormal{Roughly speaking, Theorem B implies the following dichotomy: either
$K$ is $G$-symmetric; or otherwise, its group orbit $GK$ is a
$1$-dimensional continuous subbundle on the base, while each fibre
of such bundle being totally ordered and homeomorphic to $\R$.}
\end{remark}

Based on Theorem B, one can further deduce the following two useful theorems on symmetry of $K$, as well as its uniform stability with asymptotic phase.
\vskip 2mm
\noindent {\bf Theorem C.} {\it Assume all the hypotheses in Theorem B are satisfied. If $G$ is a compact group, then $K$ is $G$-symmetric.}
\vskip 2mm

\vskip 2mm
\noindent {\bf Theorem D {\rm (Uniform stability of $K$ with asymptotic phase)}.} {\it Assume all the hypotheses in Theorem B are satisfied. If $GK\ne K$, then
there is a $\delta_*\in (0,\delta)$ such that, if $(u,\omega)\in B_{_{\delta_*}}(K)$, then its $\omega$-limit set
$\mathcal{O}(u,\omega)=hK$ for some $h\in G$. Moreover,
$$\norm{u(t,u,\omega)-h\bar{u}_{\omega\cdot t}}\to 0,\quad\textnormal{ as }t\to \infty.$$}

\section{Globally topological structure of $GK$}
In this section, we shall prove Theorems B and C under the assumption that the conclusion of Lemma A holds already. The proof of Lemma A will be given in Section 6. We first proceed to the following useful proposition.
\begin{proposition}\label{theorem 3.7}
For any $g\in G$, there exists a neighborhood $V_g\subset G$ of $g$
such that $V_gK$ is totally-ordered, i.e., $$g_1K\preceq g_2K~\mbox{
or }~g_1K\succeq g_2K,~\forall g_1,g_2\in V_g.$$
\end{proposition}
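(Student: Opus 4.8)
The plan is to deduce Proposition~\ref{theorem 3.7} from Lemma~A by a homogeneity/translation argument: Lemma~A gives local total-ordering of $GK$ only near the identity $e$, but the group structure together with the fact that $G$ commutes with $\Pi_t$ and acts in an order-preserving way should let us transport this property to a neighborhood of an arbitrary $g\in G$. First I would take the neighborhood $B(e)\subset G$ of $e$ furnished by Lemma~A, so that $hK\preceq K$ or $hK\succeq K$ for every $h\in B(e)$. Given $g\in G$, set $V_g=gB(e)$ (or, to get a symmetric neighborhood, $V_g=g\,(B(e)\cap B(e)^{-1})$); this is an open neighborhood of $g$. For $g_1,g_2\in V_g$ write $g_1=gh_1$, $g_2=gh_2$ with $h_1,h_2\in B(e)$, so that $h_2^{-1}h_1\in B(e)^{-1}B(e)$.

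The key step is then to compare $g_1K$ and $g_2K$ fiberwise. Because $g\colon X\to X$ is a homeomorphism which is order-preserving (assumption (A2)), we have $g_1K=g(h_1K)$ and $g_2K=g(h_2K)$, and $g$ preserves the relations $\preceq,\succeq$ on each fiber $P^{-1}(\omega)$ (note $g$ acts fiberwise, leaving $\omega$ fixed). Hence $g_1K\preceq g_2K$ iff $h_1K\preceq h_2K$, and similarly for $\succeq$. So it suffices to show $h_1K$ and $h_2K$ are comparable for $h_1,h_2\in B(e)$. Here I would use that $K$ itself is a $1$-cover, so $h_1K$ and $h_2K$ are $1$-covers too (images of a $1$-cover under the fiberwise homeomorphism $h_i$), and then relate $h_1K\ \mathrm{vs}\ h_2K$ to $K\ \mathrm{vs}\ h_2^{-1}h_1K$ by applying the homeomorphism $h_1^{-1}$ — but this requires $h_2^{-1}h_1\in B(e)$, which is not guaranteed merely from $h_1,h_2\in B(e)$. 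The clean fix is to choose $B(e)$ in Lemma~A (or shrink it) to be symmetric and small enough that $B(e)\cdot B(e)\subset B'(e)$ where $B'(e)$ is a neighborhood on which Lemma~A's dichotomy still holds — or, more simply, to redo the comparison directly: since Lemma~A gives $h_iK\preceq K$ or $h_iK\succeq K$ for $i=1,2$, and all of $h_1K,h_2K,K$ are $1$-covers with totally ordered fibers, the three sets are pairwise comparable on each fiber once we know each is comparable with $K$. Indeed on a fixed fiber $P^{-1}(\omega)$, which is totally ordered by assumption, the three points $h_1\bar u_\omega$, $h_2\bar u_\omega$, $\bar u_\omega$ are automatically linearly ordered — the content is that whichever way $h_1K$ sits relative to $K$ is consistent across all $\omega$, and likewise for $h_2K$, so comparing each to $K$ pins down their mutual order uniformly in $\omega$.

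Concretely, I would argue: by Lemma~A each of $h_1K,h_2K$ is $\preceq K$ or $\succeq K$. If $h_1K\preceq K\preceq h_2K$ then $h_1K\preceq h_2K$ (transitivity of $\preceq$ between $1$-covers is immediate since it is just the pointwise order on each fiber). If both satisfy $\preceq K$, then on each fiber $h_1\bar u_\omega\le\bar u_\omega$ and $h_2\bar u_\omega\le\bar u_\omega$; since $P^{-1}(\omega)$ is totally ordered, for each $\omega$ either $h_1\bar u_\omega\le h_2\bar u_\omega$ or $h_2\bar u_\omega\le h_1\bar u_\omega$, and I must rule out that the inequality flips sign with $\omega$. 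This is where continuity of $\omega\mapsto \bar u_\omega$ and of the $G$-action, plus minimality of $\Omega$ (assumption (A1)), enter: the set of $\omega$ where $h_1\bar u_\omega\le h_2\bar u_\omega$ is closed, as is its counterpart, they cover $\Omega$, and I would invoke the lattice structure (existence and continuity of $\wedge$) together with invariance under $\Pi_t$ to show one of them is all of $\Omega$ — this is the analogue of the standard fact that two ordered $1$-covers over a minimal base cannot ``cross.'' The remaining cases ($h_1K\succeq K$ with $h_2K\preceq K$, etc.) are handled symmetrically. The main obstacle I anticipate is precisely this non-crossing step for $1$-covers: making rigorous that fiberwise comparability plus continuity plus minimality forces global comparability. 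If that non-crossing lemma is not available verbatim from earlier in the paper, I would prove it inline using that $d(h_1K,h_1K\wedge h_2K)$ and $d(h_2K, h_1K\wedge h_2K)$ are continuous nonnegative functions on the minimal set $\Omega$ that are forced by invariance to be identically zero on their respective zero-sets' complements — essentially a clean application of the fact that a continuous function on a minimal flow vanishing on a nonempty invariant-ish set vanishes identically, after checking the relevant set is invariant under $\sigma_t$ using monotonicity of $\Pi_t$ and the commutation (A3).
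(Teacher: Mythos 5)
Your first route --- shrink $B(e)$ so that $B(e)^{-1}B(e)$ (equivalently, after symmetrizing, $B(e)\cdot B(e)$) lands inside the neighborhood furnished by Lemma~A, and then transport the dichotomy from $e$ to $g$ using that every element of $G$ acts as an order-preserving homeomorphism --- is exactly the paper's proof. The paper implements the shrinking step concretely with a right-invariant metric $\rho$ on $G$ (quoting Montgomery--Zippin), taking $V_g=\{h\in G:\rho(g^{-1},h^{-1})<\delta/2\}$ and using the triangle inequality together with right-invariance to conclude $\rho(g_1^{-1}g_2,e)<\delta$ for all $g_1,g_2\in V_g$, whence Lemma~A applies to $g_1^{-1}g_2$ and the order-preserving action of $g_1$ converts this into comparability of $g_1K$ and $g_2K$. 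Your topological version via continuity of multiplication achieves the same containment. Had you committed to this route, the proof would be complete.

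The route you actually elaborate (``more simply, redo the comparison directly'') is broken, and the flaw is in its first step: you assert that the fiber $P^{-1}(\omega)\cong X$ is ``totally ordered by assumption.'' It is not. The order on $X$ is the partial order induced by the cone $V_+$; the paper assumes only that greatest lower bounds $u\wedge v$ exist and depend continuously on $(u,v)$, which is a lattice-type hypothesis, not total ordering. Consequently, knowing $h_1\bar u_\omega\le\bar u_\omega$ and $h_2\bar u_\omega\le\bar u_\omega$ gives no comparability whatsoever between $h_1\bar u_\omega$ and $h_2\bar u_\omega$ (in $\RR^2$ with the standard cone, $(-1,0)$ and $(0,-1)$ both lie below the origin yet are incomparable). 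Your subsequent non-crossing discussion via minimality, continuity of $\wedge$, and invariance addresses only the secondary question of whether the \emph{direction} of an already-established fiberwise comparison is consistent in $\omega$; it cannot manufacture the fiberwise comparability the argument needs to get started. Delete the ``simpler'' alternative and keep the first route.
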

\begin{proof} Since the group $G$ is metrizable, one can write $B(e)$ in
Lemma A as $B(e)=\{g\in
G:\rho(g,e)<\delta\}$ for some $\delta>0$, where $\rho$ denotes the right-invariant
metric on $G$ (cf. \cite[Section 1.22]{MZ}) satisfying
$\rho(g\sigma,h\sigma)=\rho(g,h)$ for all $g,h,\sigma\in G.$ Thus
for any $g_1,g_2\in G$, it follows from (A2) and Lemma A
that
\begin{equation}\label{equ.3.14}g_2K\preceq g_1K\mbox~{ or
}~g_2K\succeq g_1K,~ \mbox{ whenever }~\rho(g_1^{-1}g_2,e)<\delta.
\end{equation} Now for any $g\in G$, let $V_g=\{h\in
G:\rho(g^{-1},h^{-1})<\frac{\delta}{2}\}$. It is not difficult to see that $V_g$ is a neighborhood of $g$.
 Hence if $g_1,g_2\in V_g$, then
\begin{eqnarray*}
\rho(g_1^{-1}g_2,e)&\leq& \rho(g_1^{-1}g_2,g^{-1}g_2)+\rho(g^{-1}g_2,e)\\
&= & \rho(g_1^{-1}g_2,g^{-1}g_2)+\rho(g^{-1}g_2,g_2^{-1}g_2)\\
&=&\rho(g_1^{-1},g^{-1})+\rho(g^{-1},g_2^{-1})<\delta,
\end{eqnarray*} because $\rho$ is right-invariant. As a consequence, (\ref{equ.3.14}) implies that
$$g_1K\preceq g_2K~\mbox{ or }~g_1K\succeq g_2K,~\forall
g_1,g_2\in V_g.$$ This completes the proof.
\end{proof}


%
%
%
%
%
%
Now we are in position to prove our main result Theorem B:
\vskip 2mm

\noindent {\it Proof of Theorem B: }  For any two $g_1,g_2\in G$, we
write $g_1\le g_2$ whenever $g_1K\preceq g_2K$. Then a partial order
``$\le$" is induced in $G$. A subset $S\subset G$ is called
totally-ordered if any two distinct elements of $S$ are related.

We first claim that $G$ is totally-ordered. To prove this, we
 define $$\mathcal{F}=\{S\subset G:S\mbox{ is connected and totally-ordered}\}.$$
By virtue of Lemma A,  $V_g\in \mathcal{F}\ne \emptyset$. Note that
$(\mathcal{F},\subset)$ is a partially-ordered set. It follows from
Zorn's lemma that $\mathcal{F}$ possesses a maximal element, say
$M$. We first show that $M$ is a closed subset of $G$. Consider the
closure $\bar{M}$ of $M$. Clearly, $\bar{M}$ is connected. Now, for
any $h_1,h_2\in\bar{M}$, there exist sequences
$\{g_n^{1}\},\{g_n^{2}\}\subset M$ such that $g_n^{1}\rightarrow
h_1,~g_n^{2}\rightarrow h_2$ as $n\rightarrow\infty.$ For each $n\in
\mathbb{N}$, $g_n^{1}\leq g_n^{2}$ or $g_n^{1}\geq g_n^{2},$ because
$M$ is totally-ordered. By taking a subsequence $\{n_k\}$, if
necessary, we obtain
$$g_{n_k}^{1}\leq g_{n_k}^{2},~\forall k\in\mathbb{N}
~\mbox{ or }~g_{n_k}^{1}\geq
g_{n_k}^{2},~\forall k\in\mathbb{N}.$$ Letting
$k\rightarrow\infty$ in the above, one has
$h_1\leq h_2$ or $h_1\geq h_2,$ because the order ``$\le$" is closed. Hence
$\bar{M}$ is totally-ordered. By the maximality of $M$, we get $M=\bar{M}$, which implies that
$M$ is closed.

In order to show that $M$ is also an open subset of $G$, we notice that for any $g\in M$,
by Proposition \ref{theorem 3.7}, there is a neighborhood $V_g\subset G$ of $g$ such that $V_g$ is
totally-ordered and connected. Suppose that $M$ is not open. Then one can find some $g\in M$ and
a sequence $\{g_n\}_{n=1}^\infty\subset V_g\setminus M$ such that $g_n\rightarrow g$ as $n\to \infty$.
Since $V_g$ is totally-ordered, we may also assume without loss of generality that
$g_n>g$ for all $n\in\mathbb{N}.$ Fix each $n\in\mathbb{N}$, we define $$W_n^+=\{h\in M\cap
V_g:h\geq g_n\}~\mbox{ and }~W_n^-=\{h\in M\cap V_g:h\leq g_n\}.$$
 A direct examination yields that (i) $M\cap V_g=W_n^+\cup W_n^-$; (ii) $W_n^+\cap W_n^-=\emptyset$ (Since $g_n\notin M$); (iii) $W_n^-\neq\emptyset$ (Since $g\in W_n^-$); and
(iv) $W_n^+,W_n^-$ are closed in $M\cap V_g$.
By the connectivity of $M\cap V_g$, we have $W_n^+=\emptyset$,
and hence $W_n^-=M\cap V_g$. Since $g_n\notin M$, it entails
that $M\cap V_g<g_n$ for each $n\in\mathbb{N}.$ Letting $n\rightarrow\infty$, we therefore
obtain
\begin{equation}\label{equ.3.16}M\cap V_g\leq g.\end{equation}
Furthermore, {\it we assert that $M\leq g.$} Otherwise, noticing
that $g\in M$ and $M$ is totally-ordered,  there is an $f\in M$ such
that $f>g$. Since $M$ is also connected and locally compact, it
follows from  \cite[Appendix, Proposition Y1, Page 434]{OM1} that
there is an order-preserving homeomorphism
$$\tilde{h}:[g,f]_M=\{h\in M:g\leq h\leq f\}\rightarrow[0,1]$$ with
$\tilde{h}(g)=0$ and $\tilde{h}(f)=1$. Thus by choosing $g_*\in\tilde{h}^{-1}(\delta)$
 with $0<\delta\ll1$, one has $g_*\in
(V_g\cap M)\setminus\{g\}$ and $g_*>g$, which is a contradiction to
(\ref{equ.3.16}). Thus we have proved the assertion.

 On the other
hand, recall that $g_n\in V_g$ and $g_n>g$ for every
$n\in\mathbb{N}$. Now we fix some $g_n$. Since $V_g$ is connected,
totally-ordered, and locally compact, \cite[Appendix, Proposition
Y1, Page 434]{OM1} again implies that there is an order-preserving
homeomorphism
$$\hat{h}:[g,g_n]_{_{V_g}}=\{h\in V_g:g\leq h\leq g_n\}\rightarrow[0,1]$$ with
$\hat{h}(g)=0$ and $\hat{h}(g_n)=1$. Let
$\hat{M}=M\cup[g,g_n]_{_{V_g}}$. Then  $\hat{M}\supsetneq M$. Due to the assertion in the above paragraph,
we obtain that $\hat{M}$ is connected and totally-ordered. This
contradicts the maximality of $M$. Accordingly, $M$
is an open subset of $G$.

Since $M$ is both open and closed in $G$, it follows from the connectivity of $G$ that
$G=M$. Thus we have proved the claim that $G$ is totally-ordered.

\vskip 3mm
Based on this claim, precisely one of
the following three alternatives must occur:

$\qquad$(Alt$_a$) The least upper bound (l.u.b.) of $G$ exists;

$\qquad$(Alt$_b$) The greatest lower bound (g.l.b.) of $G$ exists;

$\qquad$(Alt$_c$) Neither l.u.b. nor g.l.b. of $G$ exists.

\vskip 2mm
If (Alt$_a$) holds, then one can find a $g_0\in G$ such that
$$g\bar{u}_\omega\leq g_0\bar{u}_\omega~~\mbox{ for any }~\omega\in \Omega\mbox{ and }g\in G.$$ In particular,
$g_0^{2}\bar{u}_\omega\leq g_0\bar{u}_\omega$, and hence
$g_0\bar{u}_\omega=g_0^{-1}(g_0^{2}\bar{u}_\omega)\leq g_0^{-1}(g_0\bar{u}_\omega)=\bar{u}_\omega\leq g_0\bar{u}_\omega,$
which entails that $g_0\bar{u}_\omega=\bar{u}_\omega$ for any $\omega\in \Omega$. Consequently,
$g^{-1}\bar{u}_\omega\leq \bar{u}_\omega$, and hence
$\bar{u}_\omega=g(g^{-1}\bar{u}_\omega)\leq g\bar{u}_\omega\leq \bar{u}_\omega,$ for any $g\in G$ and $\omega\in \Omega$. This implies that
$GK=K$.

Similarly, one can obtain $GK=K$ provided that (Alt$_b$) is satisfied.  Thus we have concluded the statement (i) of Theorem B.

Finally we assume that (Alt$_c$) holds. Then fix any $\omega\in
\Omega$, $G\bar{u}_{\omega}$ is a connected, locally compact and
totally ordered set in $X$. Moreover, $G\bar{u}_{\omega}$  has
neither the l.u.b. nor the g.l.b. in $X$. It then follows from
\cite[Appendix, Proposition Y2, Page 434]{OM1} that
$G\bar{u}_{\omega}$ coincides with the image of a strictly
order-preserving continuous path in $X$:
\begin{equation}\label{increase-path}
J_\omega: \RR\rightarrow G\bar{u}_{\omega}\subset X. \end{equation}
\vskip 1mm Motivated by \cite[Section 3]{CGW}, we choose an $\omega_0\in \Omega$ and define the mapping
\begin{equation}\label{defini-h}
H: \Omega\times \RR\to GK;\, (\omega,\alpha)\mapsto
\mathcal{O}(J_{\omega_0}(\alpha)) \cap P^{-1}(\omega),\end{equation}
where $J_{\omega_0}$ comes from \eqref{increase-path} with $\omega$
replaced by $\omega_0$. Then it is not hard to check (a)-(c) for $H$ in the statement
(ii) in Theorem B. We only need to show that $H$ is a bijective continuous map.

To end this, we first note that $H$ is surjective. Indeed, for any
$(g\bar{u}_{\omega},\omega)\in GK$, let the real number
$\hat{\alpha}\in \RR$ be such that
$J_{\omega_0}(\hat{\alpha})=g\bar{u}_{\omega_0}$. Then it is easy to
see that $\mathcal{O}(J_{\omega_0}(\hat{\alpha})) \cap
P^{-1}(\omega)=(g\bar{u}_{\omega},\omega),$ because $gK$ is a
uniformly stable $1$-cover of the base $\Omega$. Consequently,
$H(\omega,\hat{\alpha})=(g\bar{u}_{\omega},\omega)$, which implies
that $H$ is surjective.

Next we choose any
$(\omega_i,\alpha_i)\in \Omega\times \RR, i=1,2,$ with
$H(\omega_1,\alpha_1)=H(\omega_2,\alpha_2)$. For each $\alpha_i$, there is a $g_i\in G$
such that $J_{\omega_0}(\alpha_i)=g_i\bar{u}_{\omega_0}$ for $i=1,2$. Again by the $1$-cover property of $g_iK$,
$$(g_1\bar{u}_{\omega_1},\omega_1)=H(\omega_1,\alpha_1)=H(\omega_2,\alpha_2)=
(g_2\bar{u}_{\omega_2},\omega_2).$$  Combining with
\eqref{increase-path}, we obtain that $\omega_1=\omega_2$ and
$g_1=g_2$, which implies that $\alpha_1=\alpha_2$. Thus $H$ is
injective.

In order to prove $H$ is continuous, we choose any sequence
$\{(\omega_k,\alpha_k)\}_{k=1}^{\infty}\subset\Omega\times \RR$ with
$(\omega_k,\alpha_k)\to (\omega_\infty,\alpha_\infty)$ as $k\to
\infty$. Accordingly, for each $k=1,2,\cdots, \infty$, we can find
$g_k\in G$ such that $J_{\omega_0}(\alpha_k)=g_k\bar{u}_{\omega_0}$.
Similarly as above, one can further obtain that
\begin{equation}\label{H-expre}
H(\omega_k,\alpha_k)=(g_k\bar{u}_{\omega_k},\omega_k),\end{equation}
for $k=1,2,\cdots, \infty$. Since $\alpha_k\to \alpha_\infty$, we
have $g_k\bar{u}_{\omega_0}\to g_\infty\bar{u}_{\omega_0}$ as $k\to
\infty$. Note also that $g_\infty K$ is uniformly stable. Then for
any $\eps>0$, there exists an integer $N=N(\eps)>0$ such that
$\norm{u(t,g_k\bar{u}_{\omega_0},\omega_0)-u(t,g_\infty\bar{u}_{\omega_0},\omega_0)}\le
\eps/3$ for all $k\ge N$ and $t\ge 0$. By letting $t\to \infty$, it
yields that, if $k\ge N$ then
\begin{equation}\label{aa1}
\norm{g_k\bar{u}_{\omega}-g_\infty\bar{u}_{\omega}}\le \eps/3,
\end{equation}
uniformly for all $\omega\in \Omega.$
 Moreover, for such
$\varepsilon$ and $N$ (choose $N$ larger if necessary), it is easy
to see that
\begin{equation}\label{a1}
\norm{\omega_k-\omega_\infty}<\varepsilon/3 \quad \textnormal{ and }
\quad
\norm{g_\infty\bar{u}_{\omega_k}-g_\infty\bar{u}_{\omega_\infty}}<\varepsilon/3,
\end{equation}
for all $k\ge N$.
 By virtue
of \eqref{H-expre}-\eqref{a1}, we have
\begin{eqnarray*}
\norm{H(\omega_k,\alpha_k)-H(\omega_\infty,\alpha_\infty)}&=&
\norm{(g_k\bar{u}_{\omega_k},\omega_k)-(g_\infty\bar{u}_{\omega_\infty},\omega_\infty)}\\
&\le &
\norm{\omega_k-\omega_\infty}+\norm{g_k\bar{u}_{\omega_k}-g_\infty\bar{u}_{\omega_k}}+
\norm{g_\infty\bar{u}_{\omega_k}-g_\infty\bar{u}_{\omega_\infty}}\\
&<& \varepsilon/3+\varepsilon/3+\varepsilon/3=\varepsilon,
\end{eqnarray*}
for all $k\ge N$. We have proved that $H$ is continuous.  $\qquad\qquad\qquad\qquad\qquad\qquad\qquad\square$

\vskip 3mm
\begin{proof}[Proof of Theorem C]
Since $G$ is compact,  both (Alt$_a$) and (Alt$_b$) are satisfied. Then we directly deduce that
$GK=K$ from the proof above.
\end{proof}

\section{Uniformly stability of $K$ with asymptotic phase}
In this section, we will prove the asymptotic phase of the uniformly stable minimal set $K$, i.e., Theorem D in
Section 3. We first present the following useful lemma:

\begin{lemma}\label{lemma3.11} Assume all the hypotheses in Theorem B are satisfied. Assume also that
$GK\ne K$. Then there exists a $\delta_0>0$ such that, if
$(u,\omega)\in B_{\delta_0}(K)$ satisfies
 $\mathcal{O}(u,\omega)\preceq g_1K$
  for some $g_1\in G$, then
$\mathcal{O}(u,\omega)=g_2K$ for some $g_2\in G$. The same
conclusion also holds if $(u,\omega)\in B_{\delta_0}(K)$ satisfies
 $\mathcal{O}(u,\omega)\succeq g_1K$.
\end{lemma}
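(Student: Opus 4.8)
The plan is to leverage the global structure provided by Theorem B. Since $GK\ne K$, we are in alternative (ii) of Theorem B: the covers $\{gK:g\in G\}$ form a totally ordered family of minimal $1$-covers, any two distinct members being related by $\ll$ on every fibre, and — as recorded in the proof of Theorem B — for a fixed $\omega_0\in\Omega$ the set $G\bar u_{\omega_0}$ has no greatest lower bound in $X$. From this I would first extract a quantitative gap: $\rho_*:=\mathrm{dist}\big(\{x\in X:x\le g\bar u_{\omega_0}\text{ for all }g\in G\},\,\bar u_{\omega_0}\big)>0$. Indeed, were there $x_n\to\bar u_{\omega_0}$ with $x_n\le g\bar u_{\omega_0}$ for all $g$, then picking one fixed $h\in G$ with $h\bar u_{\omega_0}\ll\bar u_{\omega_0}$ we would have $x_n\le h\bar u_{\omega_0}$ for all $n$, and closedness of $V_+$ would give $\bar u_{\omega_0}\le h\bar u_{\omega_0}$, a contradiction. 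Then I would fix $\delta_0\in(0,\delta)$ small enough — using the modulus of uniform stability of $K$, condition (A4-i), and Remark \ref{remark2.2} — that for every $(u,\omega)\in B_{\delta_0}(K)$ the orbit $O^+(u,\omega)$ is relatively compact and $M:=\mathcal O(u,\omega)$ is a uniformly stable, totally invariant set satisfying $M\subset B_\delta(K)$ and $d(M,K)<\rho_*$.

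The heart of the proof is then the assertion that $M$ intersects some cover $g_2K$. Granting this, if $(p,\omega_1)\in M\cap g_2K$, then computing the $\omega$-limit set of $(p,\omega_1)$ inside the minimal set $g_2K$ gives $\mathcal O(p,\omega_1)=g_2K$, while total invariance of $M$ gives $\mathcal O(p,\omega_1)\subset M$; hence $g_2K\subset M$. As $g_2K$ is now a uniformly stable (being contained in $M$) minimal set inside $M=\mathcal O(u,\omega)$ and $O^+(u,\omega)$ is relatively compact, Lemma \ref{Novo-Ob} forces $M=g_2K$, which is exactly the conclusion. To establish the assertion I would argue by contradiction: assume $M\cap gK=\varnothing$ for every $g\in G$. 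Then $M\preceq g_1K$ upgrades to $M\prec g_1K$, and likewise $M\prec gK$ whenever $gK\succeq g_1K$; put $\mathcal S:=\{g\in G:M\prec gK\}$. By (A4-ii) the set $\mathcal S$ is open, it is upward closed for the order $g\le g'\Leftrightarrow gK\preceq g'K$, and it is nonempty. If $\mathcal S=G$, then $M\prec gK$ for all $g$, so every $(x,\omega_0)\in M$ has $x\le g\bar u_{\omega_0}$ for all $g$; hence $d(M,K)\ge d_H(M(\omega_0),K(\omega_0))\ge\rho_*$, contradicting the choice of $\delta_0$. Thus $\mathcal S$ is a proper nonempty open subset of the connected group $G$, so $\partial\mathcal S\ne\varnothing$; choose $g_*\in\partial\mathcal S$ and $g_n\in\mathcal S$ with $g_n\to g_*$. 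Then $g_*\notin\mathcal S$, i.e. $M\not\prec g_*K$; on the other hand $g_nK\to g_*K$ (the map $g\mapsto gK$ is continuous and $\Omega$ is compact), so passing to the limit in $M\prec g_nK$ and invoking closedness of $V_+$ yields $M\preceq g_*K$. Combining $M\preceq g_*K$ with $M\not\prec g_*K$ forces $(g_*\bar u_{\omega_1},\omega_1)\in M$ for some $\omega_1\in\Omega$, i.e. $M\cap g_*K\ne\varnothing$ — the desired contradiction. This proves the lemma for the relation $\preceq$; the statement involving $\succeq$ follows by the symmetric argument (interchanging $\prec$ with $\succ$, and the greatest lower bound with the least upper bound of $G\bar u_{\omega_0}$), or simply by reversing the order of $V$.

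The step I expect to be the main obstacle is ruling out $\mathcal S=G$: one must turn the qualitative fact from Theorem B (``$G\bar u_{\omega_0}$ has no greatest lower bound in $X$'') into the genuine gap $\rho_*>0$, and then carry out the bookkeeping needed to choose the single radius $\delta_0$ uniformly so that, for all $(u,\omega)\in B_{\delta_0}(K)$ at once, $O^+(u,\omega)$ is relatively compact, $M$ is uniformly stable, $M\subset B_\delta(K)$ (so that (A4-ii) may be applied), and $d(M,K)<\rho_*$.
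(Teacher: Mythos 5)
Your argument is correct in substance, and it reaches the conclusion by a genuinely different route from the paper's. The paper argues by contradiction on a sequence: assuming no $\delta_0$ works, it produces $(u_m,\omega_m)\in B_{1/m}(K)$ whose omega-limit sets are comparable to some cover but equal to none, shows $d(\mathcal{O}(u_m,\omega_m),K)\to 0$ by uniform stability of $K$, proves that $A_m=\{g\in G:\mathcal{O}(u_m,\omega_m)\preceq gK\}$ is nonempty, closed and (via (A4-ii), after upgrading $\preceq$ to $\prec$ essentially as you do) open, hence $A_m=G$ by connectedness, and finally lets $m\to\infty$ to obtain $K\preceq gK$ for all $g$ and hence $GK=K$, a contradiction. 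You instead work with a single orbit and replace the limiting step by the quantitative gap $\rho_*>0$, which you correctly extract from alternative (ii) of Theorem B (the existence of some $h$ with $h\bar{u}_{\omega_0}$ strictly below $\bar{u}_{\omega_0}$, together with closedness of the cone, is all that is needed). Your use of connectedness also differs: where the paper shows the relevant set of group elements is clopen, you show it is open and either equal to $G$ (excluded by $d(M,K)<\rho_*$) or proper, in which case a boundary point $g_*$ yields an actual intersection point of $M$ with $g_*K$. Both devices do the same job; yours makes explicit which constant $\delta_0$ must beat, while the paper's sequential version avoids introducing $\rho_*$ at all.

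One step you should repair: you justify the uniform stability of $M=\mathcal{O}(u,\omega)$ by Remark \ref{remark2.2}, but that remark requires the forward orbit $O^+(u,\omega)$ itself to be uniformly stable, which is not automatic for an arbitrary $(u,\omega)\in B_{\delta_0}(K)$; uniform stability of $K$ traps such an orbit near $K$, but the resulting estimate degrades once $\varepsilon$ is smaller than the scale set by $\delta_0$. Fortunately you only need this to conclude that the minimal set $g_2K\subset M$ is uniformly stable so that Lemma \ref{Novo-Ob} applies, and that fact is available directly: $g_2K$ is the image of the uniformly stable $1$-cover $K$ under the homeomorphism $g_2$, which commutes with the semiflow --- exactly the property the paper itself invokes when it applies Lemma \ref{Novo-Ob} to $gK$ in its own proof. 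With that substitution your argument goes through.
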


\begin{proof} Without loss of generality, we only prove the first statement.
Suppose that there exists a sequence
$\{(u_m,\omega_m)\}_{m=1}^{\infty}\subset X\times \Omega$ such that, for each $m\ge 1$,

(i) $(u_m,\omega_m)\in B_{1/m}(K)$;

(ii) $\mathcal{O}(u_m,\omega_m)\preceq g_{m}^1K,$  for some $g_{m}^1\in G$; and

(iii) $\mathcal{O}(u_m,\omega_m)\neq gK, ~\mbox{ for any }~g\in G.$

\noindent By virtue of Lemma \ref{Novo-Ob}, (iii) implies that
\begin{equation}\label{equ.3.22}gK\nsubseteq\mathcal{O}(u_m,\omega_m)
\quad \textnormal{ for all } m\ge 1 \textnormal{ and } g\in G.
\end{equation} Now
we claim that
\begin{equation}\label{close-to-K}
d(\mathcal{O}(u_m,\omega_m), K)\to 0,\qquad \mbox{ as }m\to \infty.
\end{equation}
In fact, since $K$ is uniformly stable, for any
$\varepsilon>0$ there exists a $\tilde{\delta}(\varepsilon)$ such
that, if
$\norm{(y,\omega)-(\bar{u}_\omega,\omega)}<\tilde{\delta}(\varepsilon)$
then $\norm{u(t,y,\omega)-u(t,\bar{u}_\omega,\omega)}<\varepsilon$
for all $t\geq0$. Then, for $(u_m,\omega_m)\in B_{1/m}(K)$ with
$m$ sufficiently large, one has
$\norm{(u_m,\omega_m)-(\bar{u}_{\omega_m},\omega_m)}<\frac{1}{m}<\tilde{\delta}(\varepsilon)$,
and hence,
$\norm{u(t,u_m,\omega_m)-u(t,\bar{u}_{\omega_m},\omega_m)}<\varepsilon$
for all $t\geq0$. By the minimality of
$\Omega$, it then follows that
$\norm{(y,\omega)-(\bar{u}_\omega,\omega)}\leq\varepsilon$ whenever
$(y,\omega)\in\mathcal{O}(u_m,\omega_m)$. Thus we
have proved the claim.

Now fix $m\in\mathbb{N}$. We define $A_m=\{g\in
G:\mathcal{O}(u_m,\omega_m)\preceq gK\}$. Clearly, $A_m$ is nonempty (because
$g_{m}^1\in A_m$ by (ii)) and closed in $G$. By virtue of
(\ref{equ.3.22}) and \eqref{close-to-K}, one obtains that $A_m=\{g\in G:\mathcal{O}(u_m,\omega_m)\prec gK\}$,
 and moreover, $\mathcal{O}(u_m,\omega_m)\subset B_\delta(K)$ as long as
 $m$ is
sufficiently large. Here the $\delta$ is adopted from condition (A4) in Section 3.

As a consequence, (A4) entails that
$A_m$ is also open for all $m$ sufficiently large. Since $G$ is
connected,  $A_m=G$ for all $m$ sufficiently large.
This then implies that
$$\mathcal{O}(u_m,\omega_m)\preceq gK,~\forall g\in G,$$
for all $m$ sufficiently large.  By letting
$m\rightarrow\infty$ in the above inequality, \eqref{close-to-K} yields that
$K\preceq gK,~\forall g\in G.$
Replacing $g$ with $g^{-1}$ and applying $g$ on both sides, we get
$g K\preceq K.$ Hence $gK=K$ for all $g\in G$, a contraction. We have completed
the proof of the lemma.
\end{proof}

\vskip 2mm
\begin{proof}[Proof of Theorem D] Let $\delta_0>0$ be defined in
Lemma \ref{lemma3.11}. We take a $\delta_*\in
(0,\min\{\delta,\delta_0\})$
 such that $(u\wedge\bar{u}_\omega,\omega)\in B_{\delta_0}(K)$ whenever $(u,\omega)\in B_{{\delta_*}}(K)$.
 Since $u\wedge\bar{u}_\omega\leq\bar{u}_\omega$, one has $\mathcal{O}(u\wedge\bar{u}_\omega,\omega)\preceq
K.$ It then follows from Lemma \ref{lemma3.11} that
$\mathcal{O}(u\wedge\bar{u}_\omega,\omega)=g_*K$ for some $g_*\in G$. Note also that
$u\wedge\bar{u}_\omega\leq u.$ Then $g_*K\preceq\mathcal{O}(u,\omega)$. Applying Lemma \ref{lemma3.11} again, we
obtain that $\mathcal{O}(u,\omega)=gK$ for some $g\in G$. This
completes the proof.
\end{proof}

\section{Proof of Lemma A}

\begin{proof}[Proof of Lemma A] First we
shall show that {\it there exists a neighborhood $B(e)\subset G$ of $e$
such that for any $g\in B(e)$, one has
$g\bar{u}_{\omega_0}\leq\bar{u}_{\omega_0}$ or
$g\bar{u}_{\omega_0}\geq\bar{u}_{\omega_0}$ for some
$\omega_0\in\Omega$}. Otherwise, one can find a sequence $\{g_n\}_{n=0}^{\infty}\subset G$ with
$g_n\rightarrow e$ as $n\rightarrow\infty$ such that
\begin{equation}\label{unorder-sequ}
g_n\bar{u}_\omega\nleq\bar{u}_\omega~\mbox{ and }~g_n\bar{u}_\omega\ngeq\bar{u}_\omega,~\mbox{ for all }n\geq0\mbox{ and }\omega\in\Omega.
\end{equation}

In what follows, we will deduce a contradiction from
\eqref{unorder-sequ}. For this purpose,  we fix an
$\omega_0\in\Omega$, and due to (A4-i), we define
$K_n=\mathcal{O}(g_n\bar{u}_{\omega_0}\wedge
\bar{u}_{\omega_0},\omega_0)$ for all $n$ sufficiently large.
Without loss of generality, one may also assume that $K_n$ is
defined for all $n\in\mathbb{N}$. Clearly,
$K=\mathcal{O}(\bar{u}_{\omega_0},\omega_0)$. Then one can obtain
the following three facts, the proof of which will be presented in
the end of this section (see Propositions
\ref{prop.3.3}-\ref{lemma3.4}):

{\bf (F1)} $K_n\prec K$ and $K_n\prec g_nK$ for all
$n\in\mathbb{N}$.

{\bf (F2)}~$d(K_n,K)\rightarrow 0$, as $n\rightarrow\infty$.

{\bf (F3)} Given the $\delta>0$ in (A4), there exists a neighborhood
$\hat{B}(e)\subset G$ of $e$ and $N_0\in\mathbb{N}$ such that
$$d(gK_n,K)\leq\delta~\mbox{ and }~d(g_n^{-1}gK_n,K)\leq\delta,$$
for all $g\in\hat{B}(e)$ and $n\geq N_0$.

\vskip 2mm
For such $\hat{B}(e)$ and $N_0\in \mathbb{N}$ in (F3), we take a
neighborhood $B(e)\subset G$ of $e$ with
$B(e)\subset\overline{B(e)}\subset\hat{B}(e)$, and
define
$$A_n=\{g\in\overline{B(e)}:gK_n\preceq K\mbox{ and }g_n^{-1}gK_n\preceq K\}$$ for each $n\ge N_0$.
By (F1), it is easy to see that
$e\in A_n\neq\emptyset$. Moreover, $A_n$ is
closed in $\overline{B(e)}$. We assert that
\begin{equation}\label{equ.3.6}A_n=\{g\in\overline{B(e)}:gK_n\prec K\mbox{ and
}g_n^{-1}gK_n\prec K\}.\end{equation}  Indeed, for $g\in A_n$,
suppose that there exists some $(y,\tilde{\omega})\in K_n$ such that
$gy=\bar{u}_{\tilde{\omega}}$. Then by $g_n^{-1}gK_n\preceq K$ we
have $g_n^{-1}gy\leq \bar{u}_{\tilde{\omega}}$. It entails that
$g_n^{-1}\bar{u}_{\tilde{\omega}}\leq \bar{u}_{\tilde{\omega}}$, and
hence $\bar{u}_{\tilde{\omega}}\leq g_n\bar{u}_{\tilde{\omega}}$,
contradicting to \eqref{unorder-sequ}. Similarly, for such $g\in
A_n$, suppose that there exists $(z,\hat{\omega})\in K_n$ such that
$g_n^{-1}gz=\bar{u}_{\hat{\omega}}$. Then by $gK_n\preceq K$ we have
$gz\leq \bar{u}_{\hat{\omega}}$, which yields
$g_n^{-1}\bar{u}_{\hat{\omega}}\geq
g_n^{-1}gz=\bar{u}_{\hat{\omega}}$, and hence
$\bar{u}_{\hat{\omega}}\geq g_n\bar{u}_{\hat{\omega}}$,
contradicting to \eqref{unorder-sequ} again. So we have proved the
assertion (\ref{equ.3.6}).

Now fix $n\ge N_0$ and let $g\in A_n$, we write
$v_{g,n}^0:=g(g_n\bar{u}_{\omega_0}\wedge\bar{u}_{\omega_0})$ and
$w_{g,n}^0:=g_n^{-1}g(g_n\bar{u}_{\omega_0}\wedge\bar{u}_{\omega_0})$. Then by (F3) and Lemma \ref{lemma3.6},
one obtains that
\begin{equation*}
d(\mathcal{O}(v_{g,n}^0,\omega_0), K)\leq
\delta~\mbox{ with }~
\mathcal{O}(v_{g,n}^0,\omega_0)=gK_n\prec K,
\end{equation*}
and
\begin{equation*}
d(\mathcal{O}(w_{g,n}^0,\omega_0), K)\leq
\delta~\mbox{ with }~
\mathcal{O}(w_{g,n}^0,\omega_0)=g_n^{-1}gK_n\prec K.
\end{equation*} Accordingly, the condition (A4) implies that
there exist neighborhoods $B_1(e)$, $B_2(e)\subset G$ of $e$ such
that
$gK_n=\mathcal{O}(v_{g,n}^0,\omega_0)\prec B_1(e)K$
and $g_n^{-1}gK_n=\mathcal{O}(w_{g,n}^0,\omega_0)\prec
B_2(e)K$, where $B_i(e)K=\{gK:g\in B_i(e)\}$ for $i=1,2$. As a consequence,
\begin{equation}\label{equ.3.12}
(B_1(e))^{-1}gK_n\prec K~~\mbox{ and }~~(B_2(e))^{-1}g_n^{-1}gK_n\prec K.
\end{equation}
Clearly, $(B_1(e))^{-1}g$ and $(B_2(e))^{-1}g_n^{-1}g$ are neighborhoods of $g$ and $g_n^{-1}g$, respectively.
Moreover, by the continuity of $g\mapsto g_n^{-1}g$,
one can find a neighborhood $V_g$ of $g$ in $G$, such that
$g_n^{-1}V_g\subset(B_2(e))^{-1}g_n^{-1}g$. Thus by (\ref{equ.3.12})
we have $g_n^{-1}V_gK_n\prec K$. Now let
$W_g:=\overline{B(e)}\cap V_g\cap(B_1(e))^{-1}g$.
Then by (\ref{equ.3.12}) again, $W_g$ is a neighborhood of $g$ in $\overline{B(e)}$ satisfying
$$W_gK_n\prec K~\mbox{ and }~g_n^{-1}W_gK_n\prec K.$$ Therefore, $W_g\subset A_n$, which implies that
 $A_n$ is also open in
$\overline{B(e)}$. Thus by the connectivity of
$G$ (and hence the connectivity of $\overline{B(e)}$), one has
$$A_n=\overline{B(e)},~\forall n\geq N_0.$$
Consequently,
$$\overline{B(e)}K_n\preceq K ~\mbox{ and }~g_n^{-1}\overline{B(e)}K_n\preceq K$$  for all $n\geq N_0$.
Letting $n\rightarrow\infty$ in the above, by (F2), we then have
\begin{equation}\label{equ.3.13}\overline{B(e)}K\preceq K.\end{equation} Since $g_n\rightarrow e$ as
$n\rightarrow\infty$, (\ref{equ.3.13}) implies that $g_n
\bar{u}_\omega\leq \bar{u}_\omega$ for all $\omega\in\Omega$ and $n$
sufficiently large, which is a contradiction to \eqref{unorder-sequ}.

Therefore, we have proved  that there exists a neighborhood $B(e)\subset G$
of $e$ such that for any $g\in B(e)$, one has
$g\bar{u}_{\omega_0}\leq\bar{u}_{\omega_0}$ or
$g\bar{u}_{\omega_0}\geq\bar{u}_{\omega_0}$ for some
$\omega_0\in\Omega$.

Without loss of generality, we assume that
$g\bar{u}_{\omega_0}\leq\bar{u}_{\omega_0}$. Then the monotonicity of $\Pi_t$
implies
$g\bar{u}_{\omega_0\cdot t}\leq\bar{u}_{\omega_0\cdot t}$ for any $t\ge 0$. Now for any $\omega\in\Omega$, we choose
a sequence $\{t_n\}\rightarrow\infty$  such that
$\omega_0\cdot t_n\rightarrow\omega$ as $n\rightarrow\infty$. By the 1-cover property of $K$, one has
$\bar{u}_{\omega_0\cdot t_n}\rightarrow\bar{u}_{\omega}$ as
$n\rightarrow\infty$.
Thus, by letting $n\rightarrow\infty$, we obtain that
$g\bar{u}_\omega\leq\bar{u}_\omega$ for any $\omega\in\Omega$. This implies that
$gK\preceq K$ for any $g\in B(e).$ Similarly, one can obtain that $K\preceq gK$ for any $g\in B(e)$ provided that $\bar{u}_{\omega_0}\leq g\bar{u}_{\omega_0}.$ Accordingly, we conclude that for
$K=\{(\bar{u}_\omega,\omega):\omega\in \Omega\}$, there holds
$$gK\preceq K\mbox{ or }gK\succeq K,~\forall g\in B(e).$$ This is the exact statement of Lemma A.
\end{proof}

Finally, it only left to
check (F1)-(F3) above. This will be done in the following three propositions.

\begin{proposition}\label{prop.3.3}
{\rm (F1)} holds, i.e., $K_n\prec K$ and $K_n\prec g_nK$ for all
$n\in\mathbb{N}$.
\end{proposition}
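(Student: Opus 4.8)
The plan is to obtain the non-strict relations $K_n\preceq K$ and $K_n\preceq g_nK$ from monotonicity alone, and then to upgrade both to the strict relations $\prec$ by exploiting \eqref{unorder-sequ}.

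Set $z_n:=g_n\bar u_{\omega_0}\wedge\bar u_{\omega_0}$, so that $z_n\le\bar u_{\omega_0}$ and $z_n\le g_n\bar u_{\omega_0}$ by definition of the greatest lower bound. (Here $K_n=\mathcal O(z_n,\omega_0)$ is well defined for all large $n$: since $g_n\to e$ and the $G$-action and $\wedge$ are continuous, $z_n\to\bar u_{\omega_0}$, hence $(z_n,\omega_0)\in B_\delta(K)$ and (A4-i) applies; as in the proof of Lemma A we assume this for all $n$.) First I would record that $K_n$ is a non-empty compact set with $P(K_n)=\Omega$, by minimality of $\Omega$; in particular $K_n\cap P^{-1}(\omega)\ne\emptyset$ for every $\omega$. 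Monotonicity of $\Pi_t$ then yields $u(t,z_n,\omega_0)\le u(t,\bar u_{\omega_0},\omega_0)$ and $u(t,z_n,\omega_0)\le u(t,g_n\bar u_{\omega_0},\omega_0)$ for all $t\ge0$. Passing to $\omega$-limits along any sequence $t_k\to\infty$ and extracting, thanks to relative compactness of the orbits, a subsequence along which all the relevant terms converge, these order relations persist in the limit (the cone $V_+$ being closed); hence $K_n\preceq K$ and, using Lemma \ref{lemma3.6} to identify $\mathcal O(g_n\bar u_{\omega_0},\omega_0)=g_n\mathcal O(\bar u_{\omega_0},\omega_0)=g_nK$, also $K_n\preceq g_nK$. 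Since $K$ and $g_nK$ are $1$-covers, this precisely says that every $(x,\omega)\in K_n$ satisfies $x\le\bar u_\omega$ and $x\le g_n\bar u_\omega$.

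Next I would rule out equality in either of these. If some $(x,\omega)\in K_n$ satisfied $x=\bar u_\omega$, then combining with $x\le g_n\bar u_\omega$ gives $\bar u_\omega\le g_n\bar u_\omega$, contradicting \eqref{unorder-sequ}; if instead $x=g_n\bar u_\omega$, then $x\le\bar u_\omega$ gives $g_n\bar u_\omega\le\bar u_\omega$, again contradicting \eqref{unorder-sequ}. Therefore $x<\bar u_\omega$ and $x<g_n\bar u_\omega$ for every $(x,\omega)\in K_n$. Finally, since $P(K_n)=\Omega$, for each $\omega$ we may pick some $(x,\omega)\in K_n$, which then satisfies $x<\bar u_\omega$ and $x<g_n\bar u_\omega$; as $(\bar u_\omega,\omega)$ (resp. $(g_n\bar u_\omega,\omega)$) is the unique point of $K$ (resp. $g_nK$) over $\omega$, this supplies the ``converse'' half of the definition of $\prec$. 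Hence $K_n\prec K$ and $K_n\prec g_nK$.

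There is no serious obstacle here: the only two points that require a word of care are (i) choosing subsequences that make all relevant orbits converge simultaneously, so that $\le$ is inherited by the $\omega$-limit sets, and (ii) the identity $P(K_n)=\Omega$ — both standard consequences of relative compactness together with minimality of $(\Omega,\mathbb R)$. (One could alternatively exclude $K_n=K$ via Lemma \ref{Novo-Ob} and the minimality of $K$, but the argument above is shorter and, unlike that route, does not require knowing that $g_nK$ is uniformly stable.)
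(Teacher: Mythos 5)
Your proof is correct, and its first half is exactly the paper's: you derive $K_n\preceq K$ and $K_n\preceq g_nK$ from $g_n\bar{u}_{\omega_0}\wedge\bar{u}_{\omega_0}\le\bar{u}_{\omega_0}$ and $g_n\bar{u}_{\omega_0}\wedge\bar{u}_{\omega_0}\le g_n\bar{u}_{\omega_0}$ via monotonicity, closedness of the order under limits, and the $1$-cover property of $K$ and $g_nK$ (your explicit remark that $P(K_n)=\Omega$, needed for the second clause in the definition of $\preceq$, is a point the paper leaves implicit). Where you genuinely diverge is in upgrading $\preceq$ to $\prec$. The paper propagates the meet along the flow, using $\Pi_{t}(g_n\bar{u}_{\omega}\wedge\bar{u}_{\omega},\omega)\leq(g_n\bar{u}_{\omega\cdot t}\wedge\bar{u}_{\omega\cdot t},\omega\cdot t)$ to conclude that every point of $K_n$ over $\omega$ lies below $g_n\bar{u}_{\omega}\wedge\bar{u}_{\omega}$, which is strictly below both $\bar{u}_{\omega}$ and $g_n\bar{u}_{\omega}$ by \eqref{unorder-sequ}. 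You instead combine the two non-strict bounds already in hand: $x=\bar{u}_{\omega}$ together with $x\le g_n\bar{u}_{\omega}$ would force $\bar{u}_{\omega}\le g_n\bar{u}_{\omega}$, contradicting \eqref{unorder-sequ}, and symmetrically for $x=g_n\bar{u}_{\omega}$. Your version is shorter and dispenses with the auxiliary inequality $\Pi_t(u\wedge v)\le \Pi_t(u)\wedge\Pi_t(v)$; the paper's version yields the slightly stronger fiberwise bound by the meet, which it does not subsequently use. One small caveat: your closing aside that one could ``alternatively exclude $K_n=K$ via Lemma \ref{Novo-Ob}'' would not by itself give $K_n\prec K$, since $\prec$ requires strictness on every fibre rather than merely $K_n\ne K$; but as you do not rely on that route, this does not affect your argument.
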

\begin{proof}
Note that $g_n\bar{u}_{\omega_0}\wedge
\bar{u}_{\omega_0}<\bar{u}_{\omega_0}$ (resp.
$<g_n\bar{u}_{\omega_0}$).  It then follows from the monotonicity of
$\Pi_t$ that
\begin{equation}\label{equ.3.2}\Pi_{t}(g_n\bar{u}_{\omega_0}\wedge
\bar{u}_{\omega_0},\omega_0)\leq\Pi_{t}(\bar{u}_{\omega_0},\omega_0)~\mbox{
(resp. }\le \Pi_{t}(g_n\bar{u}_{\omega_0},\omega_0)),\end{equation} for all $t\ge 0$. So,
for any $(x,\omega)\in K_n$, one can find a sequence
$\{t_k\}\rightarrow\infty$ ($k\rightarrow\infty$) such that
$\Pi_{t_k}(g_n\bar{u}_{\omega_0}\wedge
\bar{u}_{\omega_0},\omega_0)\rightarrow(x,\omega)$ as
$k\rightarrow\infty$. Since $K$ is a $1$-cover, one has
$\Pi_{t_k}(\bar{u}_{\omega_0},\omega_0)\to (\bar{u}_{\omega},\omega).$ Then
\eqref{equ.3.2} implies that $(x,\omega)\le (\bar{u}_{\omega},\omega)$. As a consequence,
$K_n\preceq K$. Similarly, we can also obtain $K_n\preceq g_nK$ for every $n\in \mathbb{N}$.

Now we claim that $K_n\prec K$ (resp.  $\prec g_nK$) for all $n\in
\mathbb{N}$. Otherwise, there
exist some $N\in \mathbb{N}$ and $(x,\tilde{\omega})\in K_N$
such that
\begin{equation}\label{equ.3.3}(x,\tilde{\omega})=(\bar{u}_{\tilde{\omega}},\tilde{\omega})
~~(\mbox{resp.}~
(=g_N\bar{u}_{\tilde{\omega}},\tilde{\omega})).\end{equation} Choose
a sequence $\{s_k\}\rightarrow\infty$ ($k\rightarrow\infty$) such
that $\Pi_{s_k}(g_N\bar{u}_{\omega_0}\wedge
\bar{u}_{\omega_0},\omega_0)\rightarrow(x,\tilde{\omega})$ as
$k\rightarrow\infty$. Since
\begin{equation*}
\begin{aligned} \Pi_{t}(g_n\bar{u}_{\omega}\wedge
\bar{u}_{\omega},\omega)&\leq\Pi_{t}(g_n\bar{u}_{\omega},\omega)\wedge
\Pi_{t}( \bar{u}_{\omega},\omega) \\&=(g_n\bar{u}_{\omega\cdot
t},\omega\cdot t)\wedge (\bar{u}_{\omega\cdot t},\omega\cdot t)=
(g_n\bar{u}_{\omega\cdot t}\wedge \bar{u}_{\omega\cdot
t},\omega\cdot t)\end{aligned}
\end{equation*} for all $\omega\in\Omega$, $t\geq0$ and $n\in\mathbb{N}$,  it follows
that
$$\Pi_{s_k}(g_N\bar{u}_{\omega_0}\wedge
\bar{u}_{\omega_0},\omega_0)\leq(g_N\bar{u}_{\omega_0\cdot
s_k}\wedge \bar{u}_{\omega_0\cdot s_k},\omega_0\cdot s_k).$$ Letting
$k\rightarrow\infty$ in the above,  by the continuity of
$\bar{u}_\omega$ w.r.t. $\omega\in\Omega$, we then get
$$(x,\tilde{\omega})\leq (g_N\bar{u}_{\tilde{\omega}}\wedge \bar{u}_{\tilde{\omega}},\tilde{\omega})<(\bar{u}_{\tilde{\omega}},\tilde{\omega})
~(\mbox{resp. }(<g_N\bar{u}_{\tilde{\omega}},\tilde{\omega})),$$ where the last inequality
is from \eqref{unorder-sequ}. Accordingly, a
contradiction to (\ref{equ.3.3}) is obtained. Thus we have proved $K_n\prec K$ (resp.  $\prec g_nK$) for all $n\in
\mathbb{N}$.
\end{proof}

\begin{proposition}\label{prop.3.2}
{\rm (F2)} holds, i.e., $d(K_n,K)\rightarrow 0$, as $n\rightarrow\infty$.
\end{proposition}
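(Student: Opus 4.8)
\noindent\emph{Proof proposal.} The plan is to deduce (F2) directly from the uniform stability of $K$, exactly as \eqref{close-to-K} was obtained in the proof of Lemma \ref{lemma3.11}, the only difference being that here we have an explicit base point. First I would note that, since $g_n\to e$ and $\gamma:G\times X\to X$ is continuous, $g_n\bar{u}_{\omega_0}\to\bar{u}_{\omega_0}$; then the assumed continuity of the meet map $(u,v)\mapsto u\wedge v$ gives
$$g_n\bar{u}_{\omega_0}\wedge\bar{u}_{\omega_0}\longrightarrow \bar{u}_{\omega_0}\wedge\bar{u}_{\omega_0}=\bar{u}_{\omega_0},\qquad n\to\infty.$$
Now fix $\eps>0$ and let $\tilde{\delta}(\eps)$ be the modulus of uniform stability of $K$. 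For all $n$ large enough we have $\norm{g_n\bar{u}_{\omega_0}\wedge\bar{u}_{\omega_0}-\bar{u}_{\omega_0}}\le\tilde{\delta}(\eps)$, and since $(\bar{u}_{\omega_0},\omega_0)\in K$, Definition \ref{stability} yields
$$\norm{u(t,g_n\bar{u}_{\omega_0}\wedge\bar{u}_{\omega_0},\omega_0)-\bar{u}_{\omega_0\cdot t}}<\eps\qquad\text{for all }t\ge 0.$$

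Next I would push this estimate onto $K_n=\mathcal{O}(g_n\bar{u}_{\omega_0}\wedge\bar{u}_{\omega_0},\omega_0)$. Given any $(x,\omega)\in K_n$, choose $t_k\to\infty$ with $\Pi_{t_k}(g_n\bar{u}_{\omega_0}\wedge\bar{u}_{\omega_0},\omega_0)\to(x,\omega)$; by the $1$-cover property and continuity of $c(\cdot)$ we have $\bar{u}_{\omega_0\cdot t_k}\to\bar{u}_\omega$, so letting $k\to\infty$ in the displayed inequality gives $\norm{x-\bar{u}_\omega}\le\eps$. In particular $\sup_{x\in K_n(\omega)}d(x,K(\omega))\le\eps$ for every $\omega$. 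For the reverse Hausdorff estimate I would use that $K_n$ is a nonempty, compact, totally invariant set, so $P(K_n)$ is a nonempty closed invariant subset of $\Omega$, whence $P(K_n)=\Omega$ by minimality of $\Omega$; thus $K_n(\omega)\ne\emptyset$ for all $\omega$, and since $K(\omega)=\{\bar{u}_\omega\}$ is a singleton, $d(\bar{u}_\omega,K_n(\omega))\le\eps$ as well. Combining the two directions, $d_H(K_n(\omega),K(\omega))\le\eps$ for every $\omega\in\Omega$, i.e. $d(K_n,K)=\sup_{\omega\in\Omega}d_H(K_n(\omega),K(\omega))\le\eps$ for all $n$ sufficiently large. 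Since $\eps>0$ is arbitrary, $d(K_n,K)\to 0$.

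I do not expect a substantive obstacle in this proposition; it is essentially a continuity-plus-uniform-stability argument. The only point that needs a word of care is the claim $P(K_n)=\Omega$, which guarantees the Hausdorff metric is being computed between nonempty fibres and which follows from the total invariance of $K_n$ under $\Pi_t$ together with (A1). (The relative compactness needed for $K_n$ to be a well-defined, nonempty $\omega$-limit set is already available, since for $n$ large $(g_n\bar{u}_{\omega_0}\wedge\bar{u}_{\omega_0},\omega_0)\in B_\delta(K)$ and (A4-i) applies.)
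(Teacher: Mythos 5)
Your proposal is correct and follows essentially the same route as the paper: continuity of the group action and of $\wedge$ gives $g_n\bar{u}_{\omega_0}\wedge\bar{u}_{\omega_0}\to\bar{u}_{\omega_0}$, uniform stability of $K$ propagates this to all $t\ge 0$, and passing to the omega-limit set yields $\norm{x-\bar{u}_\omega}\le\eps$ on $K_n$. Your additional observation that $P(K_n)=\Omega$ (so the fibres $K_n(\omega)$ are nonempty and the Hausdorff distance reduces to the one-sided supremum) is a point the paper leaves implicit when it writes $d(K_n,K)=\sup_{(x,\omega)\in K_n}\norm{x-\bar{u}_\omega}$, and it is a welcome clarification rather than a deviation.
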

\begin{proof}
Note that $g_n\bar{u}_{\omega_0}\wedge\bar{u}_{\omega_0}\rightarrow\bar{u}_{\omega_0}$
 as $n\rightarrow\infty.$ Since $K$ is a uniformly
stable 1-cover of $\Omega$, it entails that, for any $\varepsilon>0$, there is some
$N_1\in \mathbb{N}$ such that \begin{equation}\label{equ.3.1}\norm{u(t,g_n\bar{u}_{\omega_0}
\wedge\bar{u}_{\omega_0},\omega_0)-\bar{u}_{\omega_0\cdot t}}<\eps
\end{equation} for all $n\ge N_1$ and $t\ge 0$. Choose any $(x,\omega)\in K_n$, there exists a sequence
$\{t_k\}\rightarrow\infty$ ($k\rightarrow\infty$) such that
$\Pi_{t_k}(g_n\bar{u}_{\omega_0}\wedge
\bar{u}_{\omega_0},\omega_0)\rightarrow(x,\omega)$ as
$k\rightarrow\infty$. By taking a subsequence, if necessary, we get
that
$\Pi_{t_k}(\bar{u}_{\omega_0},\omega_0)\rightarrow(\bar{u}_{\omega},\omega)$
as $k\rightarrow\infty$. Hence by (\ref{equ.3.1}), we have that
$\norm{x-\bar{u}_{\omega}}\leq\varepsilon$ for all $(x,\omega)\in
K_n$ and $n\geq N_1.$ Recall that $d(K_n,K)=\sup_{(x,\omega)\in
K_n}\norm{x-\bar{u}_\omega}$. Consequently, $d(K_n,K)\le \eps$ for
all $n\ge N_1$, which implies that $d(K_n,K)\to 0$ as $n\to \infty$.
\end{proof}

\begin{proposition}\label{lemma3.4}
{\rm (F3)} holds, i.e., for the $\delta>0$ in ${\rm (A4)}$, there
exists a neighborhood $\hat{B}(e)\subset G$ of $e$ and
$N_0\in\mathbb{N}$ such that $$d(gK_n,K)\leq\delta~\mbox{ and
}~d(g_n^{-1}gK_n,K)\leq\delta,$$ for all $g\in\hat{B}(e)$ and $n\geq
N_0$.
\end{proposition}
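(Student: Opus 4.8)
The plan is to reduce (F3) to a single continuity-plus-compactness statement about the group action near the identity, and then to feed in $d(K_n,K)\to 0$ from Proposition~\ref{prop.3.2} together with $g_n\to e$. The point to keep in mind throughout is that neither $X$ nor $G$ is assumed locally compact, so one cannot simply invoke uniform continuity of the action $\gamma$ on a compact neighbourhood of $K$, nor take a compact closure of a neighbourhood of $e$ in $G$.

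\emph{Step 1: a uniform estimate for the action near $e$.} First I would prove that there are an open neighbourhood $B_0(e)\subset G$ of $e$ and a radius $\rho_0>0$ such that $\norm{gx-\bar u_\omega}<\delta$ whenever $g\in B_0(e)$, $\omega\in\Omega$ and $\norm{x-\bar u_\omega}<\rho_0$. For this I consider the continuous map $\Phi\colon G\times\Omega\times X\to\RR$, $\Phi(g,\omega,x)=\norm{gx-\bar u_\omega}$, which is continuous because $\gamma$ is continuous and $\omega\mapsto\bar u_\omega$ is continuous ($K$ being a $1$-cover). For each $\omega^*\in\Omega$ we have $\Phi(e,\omega^*,\bar u_{\omega^*})=0<\delta$, so there is a basic open neighbourhood $B_{\omega^*}(e)\times N_{\omega^*}\times U_{\omega^*}$ of $(e,\omega^*,\bar u_{\omega^*})$ on which $\Phi<\delta$; using continuity of $\omega\mapsto\bar u_\omega$ I shrink $N_{\omega^*}$ and pick $\rho_{\omega^*}>0$ so that $\omega\in N_{\omega^*}$ together with $\norm{x-\bar u_\omega}<\rho_{\omega^*}$ forces $x\in U_{\omega^*}$. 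Since $\Omega$ is compact, finitely many $N_{\omega^*_1},\dots,N_{\omega^*_k}$ cover $\Omega$, and then $B_0(e):=\bigcap_{i}B_{\omega^*_i}(e)$ and $\rho_0:=\min_i\rho_{\omega^*_i}$ do the job.

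\emph{Step 2: absorbing $g_n^{-1}$, and the conclusion.} By continuity of multiplication in $G$ at $(e,e)$ I would pick an open neighbourhood $\hat B(e)\subset B_0(e)$ of $e$ with $\hat B(e)\cdot\hat B(e)\subset B_0(e)$. Since $g_n\to e$, also $g_n^{-1}\to e$, so there is $N_0\in\NN$ with $g_n^{-1}\in\hat B(e)$ and, by Proposition~\ref{prop.3.2}, $d(K_n,K)<\rho_0$ for all $n\ge N_0$. Now fix $n\ge N_0$ and $g\in\hat B(e)$. For any $(x,\omega)\in K_n$ we have $\norm{x-\bar u_\omega}\le d(K_n,K)<\rho_0$, and $g\in B_0(e)$, so Step~1 gives $\norm{gx-\bar u_\omega}<\delta$; since $gK_n=\{(gx,\omega):(x,\omega)\in K_n\}$ and $K$ is a $1$-cover, this yields $d(gK_n,K)\le\delta$. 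Similarly $g_n^{-1}g\in g_n^{-1}\hat B(e)\subset\hat B(e)\hat B(e)\subset B_0(e)$, so $\norm{g_n^{-1}gx-\bar u_\omega}<\delta$ for every $(x,\omega)\in K_n$, whence $d(g_n^{-1}gK_n,K)\le\delta$. This is exactly (F3).

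\emph{Main obstacle.} I expect no real analytic difficulty; the only thing to be careful about is the absence of local compactness, which rules out the naive uniform-continuity argument. Step~1 circumvents this using only compactness of $\Omega$ and continuity of $\omega\mapsto\bar u_\omega$, and Step~2 replaces a compactness argument for $\overline{\hat B(e)}$ by the purely algebraic inclusion $g_n^{-1}\hat B(e)\subset\hat B(e)\hat B(e)\subset B_0(e)$.
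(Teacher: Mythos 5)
Your proof is correct. It rests on the same two ingredients as the paper's argument — the convergence $d(K_n,K)\to 0$ from Proposition \ref{prop.3.2} and the joint continuity of the group action at the points $(e,\bar u_\omega)$, made uniform over the compact base $\Omega$ — but the execution is genuinely different. The paper argues by contradiction, twice: it negates each of the two estimates separately, extracts sequences $\tilde g_n\to e$ (resp.\ $g_{j_n}^{-1}h_n\to e$) and points $(y_n,\omega_n)\in K_n$ violating the bound, passes to a convergent subsequence $\omega_n\to\omega$, shows $y_n\to\bar u_\omega$ via (F2), and derives $\norm{\bar u_\omega-\bar u_\omega}\ge\delta$. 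You instead prove a single direct ``uniform continuity'' statement (Step~1) by a finite-subcover argument on $\Omega$, producing an explicit pair $(B_0(e),\rho_0)$ with $\norm{gx-\bar u_\omega}<\delta$ whenever $g\in B_0(e)$ and $\norm{x-\bar u_\omega}<\rho_0$, and then obtain both estimates at once from the algebraic inclusion $g_n^{-1}\hat B(e)\subset\hat B(e)\hat B(e)\subset B_0(e)$. What your route buys is economy (one lemma covers both claims, no subsequence bookkeeping) and an explicit quantitative neighbourhood; what the paper's route buys is brevity of setup, since it never needs to formulate the uniform estimate of your Step~1 and simply lets compactness of $\Omega$ do the work at the sequential level. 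Both proofs correctly avoid any appeal to local compactness of $X$ or $G$.
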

\begin{proof}
Firstly, suppose that there exist a sequence
$\{\tilde{g}_n\}_{n=0}^{\infty}\subset G$ with
$\tilde{g}_n\rightarrow e$ and a subsequence of $\{K_n\}_{n=0}^{\infty}$,
still denoted by $\{K_n\}_{n=0}^{\infty}$, such that
 $$d(\tilde{g}_nK_n,K)=\sup_{(y,\omega)\in
K_n}\norm{\tilde{g}_ny-\bar{u}_{\omega}}>\delta$$ for all $n\in \mathbb{N}$. Then one can
choose some $(y_n,\omega_n)\in K_n$ such that
\begin{equation}\label{equ.3.4}\norm{\tilde{g}_ny_n-\bar{u}_{\omega_n}}>\delta.\end{equation}
Without loss of generality we assume that
$\omega_n\rightarrow\omega$ in $\Omega$ as $n\rightarrow\infty$. Now
we claim that $y_n\rightarrow\bar{u}_\omega$ as
$n\rightarrow\infty$. Indeed, Proposition \ref{prop.3.2} suggests that,
for any $\varepsilon>0$,
there exists a positive integer $N\in\mathbb{N}$ such that
$$\norm{z-\bar{u}_{\omega}}<\varepsilon,~~\mbox{ for all }(z,\omega)\in
K_n~\mbox{ and }n> N.$$ So $\norm{y_n-\bar{u}_{\omega_n}}<\varepsilon\mbox{ for all }n>N,$ because
 $(y_n,\omega_n)\in K_n$.
Due to the continuity of $\bar{u}_\omega$ w.r.t.
$\omega\in\Omega$, one has
$$\norm{y_n-\bar{u}_{\omega}}\leq\norm{y_n-\bar{u}_{\omega_n}}+\norm{\bar{u}_{\omega_n}-\bar{u}_{\omega}}
<\varepsilon+\varepsilon=2\varepsilon,~~\forall n>\bar{N}$$ for some
positive integer $\bar{N}>N$. Thus, we have proved the claim. Then
by letting $n\rightarrow\infty$ in (\ref{equ.3.4}), we obtain
$\norm{\bar{u}_\omega-\bar{u}_\omega}=\norm{e\bar{u}_\omega-\bar{u}_\omega}\geq\delta,$
a contradiction. Such contradiction implies that one can find  a
neighborhood $B_1(e)$ of $e$ and some $N_1\in \mathbb{N}$ such that
$d(gK_n,K)\leq\delta$ for all $g\in B_1(e)$ and $n\ge N_1$.

Secondly, suppose that there exist a sequence
$\{h_n\}_{n=0}^{\infty}\subset G$ with $h_n\rightarrow e$ and a subsequence $\{K_{j_n}\}_{n=0}^{\infty}$ of
$\{K_n\}_{n=0}^{\infty}$
such that
$$d(g_{j_n}^{-1}h_nK_{j_n}, K)=\sup_{(y,\omega)\in
K_{j_{_n}}}\norm{g_{j_n}^{-1}h_ny-\bar{u}_{\omega}}>\delta~~\mbox{ for all
}n\in \mathbb{N}.$$ Then there exists some $(y_{j_n},\omega_{j_{_n}})\in
K_{j_n}$ such that
$\norm{g_{j_n}^{-1}h_ny_{j_n}-\bar{u}_{\omega_{j_{_n}}}}>\delta.$
Noticing $g_{j_n}^{-1}\to e$, one can
repeat the same argument above to deduce a contradiction. Thus, again one can find a neighborhood $B_2(e)$ of $e$ and some
$N_2\in \mathbb{N}$ such that $d(g_n^{-1}gK_n,K)\leq\delta$ for all $g\in B_2(e)$ and $n\ge N_2$.

Finally, let $\hat{B}(e)=B_1(e)\cap B_2(e)$ and $N_0=\max\{N_1,N_2\}$. We have completed the proof of (F3).
\end{proof}

\section{Applications to parabolic equations}

In this section we give some examples of second order parabolic
equations in time-recurrent structures which generate monotone
skew-product semiflows satisfying (A1)-(A4).

\subsection{Rotational symmetry}

Assume that $\Omega\subset\mathbb{R}^n$ is a (possibly unbounded)
rotationally symmetric domain with smooth boundary $\partial
\Omega$. Let $G$ be a connected closed subgroup of the rotation
group $SO(n)$. $\Omega$ is called $G$-symmetric if it is
$G$-invariant in the sense that $gx\in\Omega$ whenever $x\in\Omega$
and $g\in G$. A typical example of such a bounded domain is a ball,
a spherical shell, a solid torus or any other body of rotation.
While, typical unbounded domains include cylindrical domain or
$\RR^n$ itself. In \cite{WY}, asymptotic symmetry has been investigated for the bounded domains. In
this section, we focus on unbounded domains and, for brevity, we
will present the following example on $\RR^n$. As a matter of fact, general
unbounded $G$-symmetric  domains can be dealt with as well.

\vskip 3mm
Consider the following initial value problem on $\RR^n$:
\begin{equation}\label{IVP-sys}
\left\{\begin{split} &\dfrac{\partial u}{\partial t}= \Delta u
+f(t,x,u), \quad x\in \R^n,
\, t>0,\\
&u(0,x)=u_{0}(x), \quad\quad\quad x\in \R^n.
 \end{split}
 \right.
\end{equation}
Here the nonlinearity $f: \mathbb{R}\times \mathbb{R}^n\times
\mathbb{R}\to \mathbb{R}$ is assumed to be a $C^1$-admissible (with
$D=\mathbb{R}^{n+1}$) and uniformly almost periodic in $t$,
real-valued function.

In what follows we assume that

\vskip 2mm {\bf (f 1)} $f(t,gx,u)=f(t,x,u)$ for all
$x\in\mathbb{R}^n,\,u\in\mathbb{R},\,g\in G$ and $t\in\mathbb{R}$;

\vskip 2mm {\bf (f 2)} $f(t,x,0)=0$ for all $x\in \RR^n$ and $t\in\mathbb{R}$;

\vskip 2mm {\bf (f 3)} there exist positive numbers $\epsilon_0,
R_0,\alpha$ such that $\frac{\partial f}{\partial u}(t,x,u)\leq
-\alpha$ for all $|x|\geq R_0$, $|u|\leq\epsilon_0$ and
$t\in\mathbb{R}$.

\vskip 2mm Let $X$ be defined by
$$C_{{\rm unif}}(\RR^n)=\{u(x):u~ \mbox{ is bounded and uniformly continuous on }\RR^n\}$$
with the $L^\infty$-topology.
Let $Y=H(f)$ be the hull of the nonlinearity
$f$. Then, for any $g\in Y$, the function $g$ is uniformly almost
periodic in $t$ and satisfies all the above assumptions (f 1)-(f 3).
As a consequence, \eqref{IVP-sys} gives rise to a family of
equations associated to each $g\in Y$:
\begin{equation}\tag{\ref{IVP-sys}$_g$}
\left\{\begin{split} &\dfrac{\partial u}{\partial t}= \Delta u
+g(t,x,u), \quad x\in \R^n,
\, t>0,\\
&u(0,x)=u_{0}(x), \quad\quad\quad x\in \R^n.
 \end{split}
 \right.
\end{equation}
By standard theory for parabolic equations (see \cite{Fri,Hen}),
for every $u_0\in X$ and $g\in H(f)$, equation $(\ref{IVP-sys}_g)$ admits a
(locally) unique classical solution $u(t,\cdot,u_0,g)$ in $X$ with
$u(0,\cdot,u_0,g)=u_0$. This solution also continuously depends on
$g\in Y$ and $u_0\in X$ (see, e.g. \cite{Hen, MS}). Therefore, $(\ref{IVP-sys}_g)$ defines a
(local) skew-product semiflow $\Pi_t$ on $X\times Y$ with
$$\Pi_t(u_0,g)=(u(t,\cdot,u_0,g),g\cdot t),\quad \forall\,
(u_0,g)\in X\times Y, \,t\ge 0.$$ We define an order relation in $X$
by $$u\leq v~\mbox{ if }~u(x)\leq v(x),~\forall x\in\RR^n.$$ The
action of $G$ on $\RR^n$ induces a group action on $X$ by
$$a:u(x)\mapsto u(a^{-1}x).$$ Clearly, (A1)-(A3) in Section 3 are fulfilled.

\begin{theorem}[Rotational symmetry]\label{theorem 4.6} Any
uniformly $L^\infty$-stable entire (possibly sign-changing) solution $\bar{u}_f(t,x)$ of
{\rm (\ref{IVP-sys})} (with
$\mathcal{M}(\bar{u}_f)\subset\mathcal{M}(f)$) satisfying
\begin{equation}\label{equ.4.21} \sup_{t\in\RR}|\bar{u}_f(t,x)|\to
0,\quad\mbox{ as
}|x|\to\infty
\end{equation} is G-symmetric, i.e., $\bar{u}_f(t,gx)=\bar{u}_f(t,x)$
for all $t\in\RR$, $x\in\RR^n$ and $g\in G$.
\end{theorem}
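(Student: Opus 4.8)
The plan is to recast $(\ref{IVP-sys})$ as a monotone skew-product semiflow on $X\times Y$ with $X=C_{{\rm unif}}(\RR^n)$ and $Y=H(f)$, to identify the entire solution $\bar u_f$ with a uniformly stable $1$-cover $K$ of the base $Y$, to verify the standing hypotheses (A1)--(A4), and then to invoke Theorem C; since $G$ is a closed connected subgroup of $SO(n)$ it is compact, hence locally compact, so Theorems B and C are available. The pointwise order on $X$ admits $u\wedge v=\min\{u,v\}$, which is Lipschitz in $(u,v)$, so $X$ satisfies the standing assumption on the phase space; the action $a\colon u(\cdot)\mapsto u(a^{-1}\cdot)$ is order-preserving and, by (f 1), commutes with $\Pi_t$, so (A1)--(A3) hold, as already noted in the text.

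First I would produce the $1$-cover. Since $\bar u_f$ is bounded and uniformly $L^\infty$-stable, parabolic smoothing together with $(\ref{equ.4.21})$ and (f 2)--(f 3) (which, as explained below, force the orbit to decay uniformly in time at spatial infinity) shows that $\{(\bar u_f(t,\cdot),f\cdot t):t\in\RR\}$ is relatively compact in $X\times Y$; its closure $K$ is thus compact, and since $Y$ is minimal, $K$ is a minimal set. The module containment $\mathcal M(\bar u_f)\subset\mathcal M(f)$ yields, by the flow-epimorphism characterization recalled in Section 2, a flow epimorphism $\pi\colon H(f)\to H(\bar u_f)$; composing $\pi$ with evaluation at $t=0$ and using $(\ref{equ.4.21})$ to pass from local to $L^\infty$-convergence gives a continuous $c\colon Y\to X$ with $\bar u_f(t,\cdot)=c(f\cdot t)$, so $K=\{(c(\omega),\omega):\omega\in Y\}$ is a $1$-cover of $Y$; write $K=\{(\bar u_\omega,\omega):\omega\in Y\}$. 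Uniform $L^\infty$-stability of $\bar u_f$ says the trajectory through $(c(f),f)$ is uniformly stable, so by Remark \ref{remark2.2} its $\omega$-limit set --- which equals $K$ by minimality --- is a uniformly stable set.

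The heart of the matter is (A4). Fix $R_1\ge R_0$ with $\sup_{\omega\in Y}|\bar u_\omega(y)|<\epsilon_0/2$ for $|y|\ge R_1$ (possible by $(\ref{equ.4.21})$ and continuity of $\omega\mapsto\bar u_\omega$) and choose $\delta\in(0,\epsilon_0/4)$ smaller than the modulus of uniform stability of $K$ at level $\epsilon_0/4$. For (A4-i): an orbit started in $B_\delta(K)$ stays in $B_{\epsilon_0/4}(K)$, hence is bounded; interior parabolic estimates give equicontinuity for $t\ge1$; and on $\{|y|\ge R_1\}$ the solution has $L^\infty$-norm $<\epsilon_0$, so (f 2)--(f 3) turn the equation there into $v_t=\Delta v+b(t,y)v$ with $b\le-\alpha$, whence comparison with the supersolution $\epsilon_0\bigl(e^{-\sqrt\alpha(|y|-R_1)}+e^{-\alpha t}\bigr)$ makes the spatial tails uniformly small for large $t$; a standard compactness argument then gives relative compactness of $O^+(x_0,\omega_0)$. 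For (A4-ii) assume $\mathcal O(x_0,\omega_0)\subset B_\delta(K)$ and $\mathcal O(x_0,\omega_0)\prec hK$ (the $\succ$ case being symmetric); as $hK$ is a $1$-cover this says $x<h\bar u_\omega$ for every $(x,\omega)\in\mathcal O(x_0,\omega_0)$. Applying the parabolic strong maximum principle on $\RR^n\times[0,1]$ together with the total invariance $\Pi_1\bigl(\mathcal O(x_0,\omega_0)\bigr)=\mathcal O(x_0,\omega_0)$ and the compactness of $\mathcal O(x_0,\omega_0)$, I would extract a uniform gap $\kappa_0>0$ with $h\bar u_\omega(y)-x(y)\ge\kappa_0$ for all $(x,\omega)\in\mathcal O(x_0,\omega_0)$ and all $|y|\le R_1$. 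Because $g\mapsto g\bar u_\omega$ is continuous at $e$ uniformly in $\omega$ on $\{|y|\le R_1\}$, for $g$ in a small neighbourhood $B(e)$ of $e$ we get $gh\bar u_\omega-x\ge\kappa_0/2>0$ on $\{|y|\le R_1\}$; and on $\{|y|\ge R_1\}$, where by (f 1) $gh\bar u_{\omega\cdot t}$ is again a solution and both it and the orbit of $x$ have $L^\infty$-norm $<\epsilon_0$ there, the maximum principle at spatial infinity coming from (f 3) propagates the positivity from $|y|=R_1$ outward, giving $gh\bar u_\omega-x\ge0$ on all of $\RR^n$. Hence $x<gh\bar u_\omega$ for every $(x,\omega)\in\mathcal O(x_0,\omega_0)$, i.e.\ $\mathcal O(x_0,\omega_0)\prec ghK$ for all $g\in B(e)$, which is (A4-ii).

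With (A1)--(A4) verified and $G$ compact, Theorem C gives $GK=K$, that is $g\bar u_\omega=\bar u_\omega$ for all $g\in G$, $\omega\in Y$. Taking $\omega=f\cdot t$ and recalling $\bar u_{f\cdot t}=\bar u_f(t,\cdot)$ and the definition of the action, this reads $\bar u_f(t,g^{-1}x)=\bar u_f(t,x)$ for all $t\in\RR$, $x\in\RR^n$, $g\in G$, which --- $G$ being a group --- is exactly the asserted $G$-symmetry. The main obstacle is (A4-ii): $\Pi_t$ is merely monotone, and in $C_{{\rm unif}}(\RR^n)$ two strictly ordered solutions can have their difference tend to $0$ at spatial infinity, so strict ordering is not automatically robust under a small rotation; its persistence has to be assembled from two separate mechanisms --- the strong maximum principle plus total invariance of $\mathcal O(x_0,\omega_0)$ on bounded balls, and the dissipativity (f 2)--(f 3) near $u=0$ at spatial infinity --- matched along the sphere $|y|=R_1$, the same dissipativity also underpinning the uniform-in-time spatial decay used in (A4-i).
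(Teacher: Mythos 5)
Your overall route is the same as the paper's: set up the skew-product semiflow on $C_{\rm unif}(\RR^n)\times H(f)$, use the module containment and uniform stability to realize $\bar u_f$ as a uniformly stable $1$-cover $K$, verify (A4) by splitting $\RR^n$ into a ball $\overline{B_R}$ (strong maximum principle, compactness of the omega-limit set, continuity of the rotation action) and its exterior (dissipativity from (f\,2)--(f\,3)), and conclude by Theorem~C since $G\subset SO(n)$ is compact. Your verification of (A4-i) via an explicit exponential barrier is a legitimate variant of Proposition~\ref{compactness-or}, and your interior treatment of (A4-ii) matches steps (A)--(C) of Proposition~\ref{lemma 4.8}.

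The gap is in the exterior half of (A4-ii). You claim that on $\{|y|\ge R_1\}$ ``the maximum principle \dots\ propagates the positivity from $|y|=R_1$ outward, giving $gh\bar u_\omega-x\ge 0$.'' For a parabolic problem on the exterior domain, positivity of $w=gh\bar u_{\omega\cdot t}-u(t,\cdot)$ on the lateral boundary $|y|=R_1$ does \emph{not} by itself yield $w\ge 0$ in the interior: you also need control of $w$ on the initial time slice, and nonnegativity there is precisely what you are trying to prove. What the dissipativity actually gives (this is Lemma~\ref{lemma 4.9}) is only the one-sided estimate $w(t,\cdot)\ge -2\epsilon_0 e^{-\alpha t}$ from the crude bound $w(0,\cdot)\ge -2\epsilon_0$. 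The missing step is the one the paper performs at the end of Proposition~\ref{lemma 4.8}: use the \emph{total invariance} of $\mathcal O(x_0,\omega_0)$ to write each $(v,g)\in\mathcal O(x_0,\omega_0)$ as $\Pi_\tau(v_{-\tau},g_{-\tau})$ for arbitrarily large $\tau$, apply the exterior comparison over $[0,\tau]$ to get $gh\bar u_g(0,\cdot)-av\ge -2\epsilon_0 e^{-\alpha\tau}$ outside $B_R$, and then let $\tau\to\infty$. You do invoke total invariance, but only to produce the uniform gap on the bounded ball; without redeploying it in the exterior comparison in this way, the step ``$\ge 0$ on all of $\RR^n$'' does not follow. (A smaller omission in the same step: to run the comparison on the unbounded exterior you also need $\liminf_{|y|\to\infty}w\ge 0$, the paper's condition (D), which here follows from $gh\bar u_\omega-x\ge gh\bar u_\omega-h\bar u_\omega\to 0$.)
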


For the entire solution $\bar{u}_f(t,x)$ given in Theorem
\ref{theorem 4.6}, clearly, $E:={\rm cl}\{\bar{u}_f(t,\cdot)\in
X:t\in\RR\}$ is a 1-cover of $H(f)$, because $\bar{u}_f$ is uniformly stable.
 Thus one can write
$E=\{\bar{u}_g(0,\cdot)\in X:g\in H(f)\}$ with
$\bar{u}_f(t,\cdot)=\bar{u}_{f\cdot t}(0,\cdot)$ for all $t\in\RR$.
Let $K:=\{(\bar{u}_g(0,\cdot),g):g\in H(f)\}$.

Recall that the rotation group $G$ is compact, in order to obtain
the rotational symmetry of $\bar{u}_f(t,x)$, we only need to check
(A4) in view of our abstract Theorem~C. This will be done in
Propositions \ref{lemma 4.8} and \ref{compactness-or} below. We
first proceed to present the following useful lemma.

\begin{lemma}\label{lemma 4.7}
$$\sup_{g\in H(f)}\sup_{t\in\RR}|\bar{u}_g(t,x)|\to 0,
\quad\mbox{ as }|x|\to\infty.$$
\end{lemma}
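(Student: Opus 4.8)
The plan is to reduce this assertion, which is uniform over the whole hull $H(f)$, to the decay hypothesis \eqref{equ.4.21}, which concerns only the single entire solution $\bar u_f$. The bridge between the two is the $1$-cover structure of $K=\{(\bar u_g(0,\cdot),g):g\in H(f)\}$ (available since $\bar u_f$ is uniformly stable, as already noted above) together with the minimality of $H(f)$.

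First I would record the identity $\bar u_g(t,\cdot)=\bar u_{g\cdot t}(0,\cdot)$ for every $g\in H(f)$ and $t\in\RR$, which is immediate from the $1$-cover property of $K$. Hence, for a fixed $g$ and a fixed $x\in\RR^n$,
\[
\sup_{t\in\RR}|\bar u_g(t,x)|=\sup_{t\in\RR}|\bar u_{g\cdot t}(0,x)|=\sup_{h\in\overline{\{g\cdot t:\,t\in\RR\}}}|\bar u_h(0,x)|,
\]
where the last equality uses that the section map $c:H(f)\to X$, $c(h)=\bar u_h(0,\cdot)$, of the $1$-cover $K$ is continuous into $C_{\rm unif}(\RR^n)$ with the $L^\infty$ topology; in particular $h\mapsto|\bar u_h(0,x)|$ is continuous for each fixed $x$, so the supremum over the (dense) forward orbit of $g$ coincides with the supremum over its closure.

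Next, since $(H(f),\RR)$ is minimal, $\overline{\{g\cdot t:\,t\in\RR\}}=H(f)$ for every $g\in H(f)$. Therefore $\sup_{t\in\RR}|\bar u_g(t,x)|$ does not depend on $g$ and equals $M(x):=\sup_{h\in H(f)}|\bar u_h(0,x)|$. Taking the supremum over $g\in H(f)$ changes nothing, so the left-hand side of the lemma is exactly $M(x)$; and applying the identity once more with $g=f$ gives $M(x)=\sup_{t\in\RR}|\bar u_f(t,x)|$. By \eqref{equ.4.21} this tends to $0$ as $|x|\to\infty$, which is the claim.

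There is essentially no hard step here: the only point requiring care is the passage from the supremum over the merely dense forward orbit of a point of $H(f)$ to the supremum over all of $H(f)$, and this is precisely what the continuity of the $1$-cover section map in the $L^\infty$ topology — itself a consequence of the uniform stability of $\bar u_f$ — delivers. Everything else is just a rewriting of the quantities at hand.
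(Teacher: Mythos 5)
Your proof is correct and uses the same ingredients as the paper's: minimality of $H(f)$ (density of orbits), continuity of the $1$-cover section $h\mapsto\bar u_h(0,\cdot)$, and the decay hypothesis \eqref{equ.4.21}. Your packaging --- observing that $\sup_{t\in\RR}|\bar u_g(t,x)|$ equals the $g$-independent quantity $\sup_{h\in H(f)}|\bar u_h(0,x)|$ because the supremum of a continuous function over a dense orbit equals its supremum over the closure --- is a slightly cleaner rendering of the paper's $\varepsilon/2+\varepsilon/2$ approximation of $\bar u_g$ by translates $\bar u_{f\cdot t_n}$, but it is essentially the same argument.
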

\begin{proof} Since $K$ is a 1-cover of $H(f)$, for any $g\in H(f)$
there exists a sequence $\{t_n\}\to \infty$ such
that
$$\lim_{n\to\infty}\bar{u}_{f\cdot t_n}(t,x)=\lim_{n\to\infty}\bar{u}_{(f\cdot t_n)\cdot t}(0,x)=\bar{u}_{g\cdot t}(0,x)=\bar{u}_g(t,x)$$
uniformly in $t\in\RR$ and $x\in\RR^n$. Then for any
$\varepsilon>0$, it follows from (\ref{equ.4.21}) that there exists
some $R_\varepsilon>0$ such that
\begin{equation*}\begin{aligned}|\bar{u}_g(t,x)|&\leq|\bar{u}_g(t,x)-\bar{u}_{f\cdot
t_n}(t,x)|+|\bar{u}_{f\cdot
t_n}(t,x)|\\&=|\bar{u}_g(t,x)-\bar{u}_{f\cdot
t_n}(t,x)|+|\bar{u}_{f}(t+t_n,x)|\\&<\frac{\varepsilon}{2}+\frac{\varepsilon}{2}=\varepsilon
\end{aligned}
\end{equation*} for all $t\in\RR$, $|x|>R_\varepsilon$, $g\in H(f)$ and $n$ sufficiently large. This
implies that $$\sup_{g\in H(f)}\sup_{t\in\RR}|\bar{u}_g(t,x)|\to
0\quad\mbox{ as }|x|\to\infty.$$
\end{proof}

\begin{proposition}\label{lemma 4.8}
Let $\epsilon_0$ be given in {\rm (f 3)}. Let also $(u_0,g_0)\in X\times H(f)$ be such that its omega limit
set $\mathcal{O}(u_0,g_0)$ exists and satisfies
$$
\norm{v(\cdot)-\bar{u}_g(0,\cdot)}_{L^\infty}<\frac{\epsilon_0}{2},\quad\mbox{for
all }(v,g)\in\mathcal{O}(u_0,g_0), $$ with
$$(v(x),g)\leq (\bar{u}_g(0,x),g),~v(x)\not\equiv
\bar{u}_g(0,x),\quad\quad x\in\RR^n,\,(v,g)\in\mathcal{O}(u_0,g_0).$$
Then there is a neighborhood $B(e)\subset G$ of $e$ such that
$$(av(x),g)\leq (\bar{u}_g(0,x),g),~av(x)\not\equiv
\bar{u}_g(0,x),$$ for all $x\in\RR^n$, $a\in B(e)$ and $(v,g)\in\mathcal{O}(u_0,g_0)$.
 The assertion remains true if the inequality sign
$\leq$ is replaced by $\geq$.
\end{proposition}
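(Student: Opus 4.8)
This proposition is the concrete verification, for the skew-product semiflow generated by $(\ref{IVP-sys})$, of hypothesis (ii) of (A4) with $h=e$; I would follow the Ogiwara--Matano scheme — a strong maximum principle on a large ball together with the dissipativity furnished by (f3) outside it. Write $\mathcal{O}=\mathcal{O}(u_0,g_0)$; it is a nonempty compact totally invariant set, so each $(v,g)\in\mathcal{O}$ lies on an entire orbit, still written $v(t,\cdot)$, with $(v(t,\cdot),g\cdot t)\in\mathcal{O}$ for all $t\in\RR$, and $\bar u_g(t,\cdot)$ denotes the entire orbit through $(\bar u_g(0,\cdot),g)\in K$. Since the two hypotheses on $\mathcal{O}$ hold at every point of the invariant set $\mathcal{O}$, they propagate: $v(t,\cdot)\le\bar u_g(t,\cdot)$ and $v(t,\cdot)\not\equiv\bar u_g(t,\cdot)$ for all $t\in\RR$. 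Because $a$ commutes with $\Pi_t$ (assumption (A3)) and $a\mathcal{O}=\mathcal{O}(au_0,g_0)$ is again compact totally invariant (Lemma \ref{lemma3.6}), the map $x\mapsto v(t,a^{-1}x)$ is the entire orbit through $(av,g)$, and by (f1) together with rotation invariance of $\Delta$ it solves the same equation as $\bar u_g(t,\cdot)$. Hence $w(t,x):=\bar u_g(t,x)-v(t,a^{-1}x)$ is a bounded entire solution of a linear parabolic equation $\partial_t w=\Delta w+c(t,x)w$ with $c$ bounded, and it suffices to show that, for $a$ close enough to $e$, $w(0,\cdot)\ge0$ and $w(0,\cdot)\not\equiv0$.

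\emph{Step 1: strict positivity and a uniform core bound.} The difference $z_{v,g}(t,x):=\bar u_g(t,x)-v(t,x)$ solves a linear parabolic equation with bounded coefficient, and $z_{v,g}(t,\cdot)\ge0$, $\not\equiv0$ for every $t$; by the strong maximum principle $z_{v,g}(t,x)>0$ for all $t\in\RR$, $x\in\RR^n$. Thus $\mathcal{Z}:=\{\bar u_{g'}(0,\cdot)-v':(v',g')\in\mathcal{O}\}$, the continuous image of the compact set $\mathcal{O}$, is a compact subset of $X$ whose members are everywhere positive on $\RR^n$. Using Lemma \ref{lemma 4.7}, fix $R_1\ge R_0$ with $\sup_{g}\sup_{t}|\bar u_g(t,x)|<\epsilon_0/4$ for $|x|\ge R_1$; then $\norm{v-\bar u_g(0,\cdot)}_{L^\infty}<\epsilon_0/2$ forces $\sup_t|v(t,x)|<3\epsilon_0/4$ for $|x|\ge R_1$, all $(v,g)\in\mathcal{O}$. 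Since $\{\zeta(x):\zeta\in\mathcal{Z},\,|x|\le R_1\}$ is a compact set of strictly positive numbers, $\rho_0:=\min\{\zeta(x):\zeta\in\mathcal{Z},\,|x|\le R_1\}>0$; that is, $\bar u_g(t,x)-v(t,x)\ge\rho_0$ for all $t\in\RR$, $|x|\le R_1$, $(v,g)\in\mathcal{O}$.

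\emph{Step 2: choice of $B(e)$.} The projection $\pi_X(\mathcal{O})\subset C_{\mathrm{unif}}(\RR^n)$ is compact, hence uniformly equicontinuous, so there is a neighbourhood $B(e)\subset G$ of $e$ with $|v(t,a^{-1}x)-v(t,x)|<\rho_0/2$ for all $a\in B(e)$, $t\in\RR$, $(v,g)\in\mathcal{O}$, $|x|\le R_1$ (here $|a^{-1}x|=|x|$, and $|a^{-1}x-x|$ is small on $\{|x|\le R_1\}$ for $a$ near $e$). Fix $a\in B(e)$, $(v,g)\in\mathcal{O}$. On $\{|x|\le R_1\}$, $w(t,x)\ge(\bar u_g(t,x)-v(t,x))-|v(t,x)-v(t,a^{-1}x)|\ge\rho_0-\rho_0/2=\rho_0/2>0$ for all $t$. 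On $\{|x|>R_1\}$, $|\bar u_g(t,x)|<\epsilon_0/4$ and $|v(t,a^{-1}x)|<3\epsilon_0/4$ (as $|a^{-1}x|=|x|>R_1$), so $|w|<\epsilon_0$ there and the interpolated argument of $\partial g/\partial u$ defining $c$ has modulus $<\epsilon_0$; hence, by (f3), $c(t,x)\le-\alpha$ on $\{|x|>R_1\}$ for all $t$.

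\emph{Step 3: exterior maximum principle.} It remains to show $w\ge0$ on $\{|x|>R_1\}$; combined with $w\ge\rho_0/2>0$ on $\{|x|\le R_1\}$ this gives $w(0,\cdot)\ge0$ with $w(0,\cdot)\not\equiv0$, i.e. $av\le\bar u_g(0,\cdot)$ and $av\not\equiv\bar u_g(0,\cdot)$. Suppose not, and set $-m:=\inf\{w(t,x):t\in\RR,\,|x|\ge R_1\}<0$; pick $(t_k,x_k)$ with $|x_k|\ge R_1$, $w(t_k,x_k)\to-m$. Since $w\ge\rho_0/2>0$ on $\{|x|\le R_1\}$ uniformly in $t$, while $w(t_k,x_k)\to-m<0$, we may assume $|x_k|\ge R_1+\delta_1$ for some $\delta_1>0$. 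The translates $w_k(t,x):=w(t+t_k,x+x_k)$ solve $\partial_t w_k=\Delta w_k+c_k w_k$ with $|w_k|\le\epsilon_0$ and $c_k\le-\alpha$ on $\{|x+x_k|>R_1\}$; by interior parabolic estimates a subsequence converges in $C^{1,2}_{\mathrm{loc}}$ to a bounded entire solution $w_\infty$ of $\partial_t w_\infty=\Delta w_\infty+c_\infty w_\infty$, with $c_\infty\le-\alpha$ on a limiting open domain containing the origin in its interior, $w_\infty\ge-m$, $w_\infty(0,0)=-m$. Then $\zeta:=w_\infty+m\ge0$ vanishes at the interior point $(0,0)$ and satisfies $\partial_t\zeta-\Delta\zeta-c_\infty\zeta=-c_\infty m\ge\alpha m>0$; the strong minimum principle forces $\zeta\equiv0$ for $t\le0$, whence $0=\partial_t\zeta=\Delta\zeta+c_\infty\zeta-c_\infty m=-c_\infty m$ there, contradicting $c_\infty\le-\alpha<0$, $m>0$. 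The version with ``$\le$'' replaced by ``$\ge$'' is identical after interchanging $v(t,\cdot)$ and $\bar u_g(t,\cdot)$. I expect Step 3 to be the main obstacle — making the translation--compactness limit rigorous with merely bounded (not Hölder) $c$ and invoking the strong minimum principle for the limit equation; the uniform-in-$t$ bookkeeping in Steps 1--2 (equicontinuity of $\pi_X(\mathcal{O})$, the positive lower bound $\rho_0$) also requires care.
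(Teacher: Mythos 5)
Your proposal is correct in outline and its first two steps coincide with the paper's: the paper likewise uses the strong maximum principle plus total invariance and compactness of $\mathcal{O}(u_0,g_0)$ to get a uniform gap $\tilde{\epsilon}$ between $v$ and $\bar u_g(0,\cdot)$ on a large ball $\overline{B_R}$ (your $\rho_0$), then shrinks the neighborhood of $e$ so that the strict inequality survives the rotation there (its condition (C) is your Step 2 bound on $\{|x|\le R_1\}$), and uses Lemma \ref{lemma 4.7} together with the $\eps_0/2$-closeness hypothesis to force the interpolated argument of $\partial g/\partial u$ below $\eps_0$ outside the ball so that (f3) yields a zeroth-order coefficient $\le-\alpha$ there (its conditions (A), (B), (D)). Where you genuinely diverge is the exterior region. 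The paper runs an explicit barrier: for each $\tau>0$ it picks $(v_{-\tau},g_{-\tau})\in\mathcal{O}(u_0,g_0)$ with $\Pi_\tau(v_{-\tau},g_{-\tau})=(v,g)$, compares $w$ on $(\RR^n\setminus B_R)\times(0,\tau]$ with the subsolution $\tilde r(t)=-2\eps_0e^{-\alpha t}$ (Lemma \ref{lemma 4.9}), obtains $\bar u_g(0,x)-av(x)\ge-2\eps_0e^{-\alpha\tau}$, and lets $\tau\to\infty$. You instead argue by contradiction: assume a negative infimum $-m$ of $w$ over $\{|x|\ge R_1\}\times\RR$, translate along an almost-minimizing sequence, extract a limiting bounded entire solution, and kill it with the strong minimum principle using $c_\infty\le-\alpha$. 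Both routes are sound and both exploit the same two ingredients (the entire orbit through $(v,g)$ and the strict negativity of the coefficient at infinity); the paper's barrier is more elementary and entirely sidesteps the issues you flag yourself --- convergence of the merely bounded coefficients $c_k$, regularity of $w_\infty$ needed to evaluate $\partial_t\zeta-\Delta\zeta$ at the touching point, and the strong minimum principle for strong solutions --- at the cost of having to guess the explicit subsolution, whereas your Liouville-type argument is more robust (it would survive, e.g., replacing the exponential decay structure by any situation where the coefficient is uniformly negative near the minimizing sequence) but requires Krylov--Safonov-type machinery to make the limit rigorous. One small bookkeeping point in your Step 3: the claim that the minimizing sequence satisfies $|x_k|\ge R_1+\delta_1$ needs the uniform-in-$t$ equicontinuity of $x\mapsto w(t,x)$, which does hold here because $K$ and $\mathcal{O}(u_0,g_0)$ are compact in $C_{\mathrm{unif}}(\RR^n)\times H(f)$, but it should be said explicitly.
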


\begin{proof}  We only prove the first assertion of the Proposition. The last assertion is similar.
Motivated by \cite[Lemma 5.8]{OM1,OM2}, we let
$\alpha,\,\epsilon_0,\,R_0$
 be such that (f 3) holds. By virtue of Lemma \ref{lemma 4.7},
 we choose some $R\geq R_0>0$ such that
\begin{equation}\tag{{\bf A}}|\bar{u}_g(t,x)|<\frac{\epsilon_0}{4},\quad\mbox{ for all
}x\in\RR^n\backslash B_R,~g\in H(f)\mbox{ and }t\in\RR,
\end{equation} where $B_R=\{x\in\RR^n:|x|<R\}$. Moreover,
for such $\epsilon_0>0$, there exists a neighborhood $B_0(e)\subset G$ of $e$
such that
\begin{equation}\label{equ.4.27}|a\bar{u}_g(0,x)-\bar{u}_g(0,x)|<\frac{\epsilon_0}{4}, ~\mbox{
for all } x\in\RR^n, a\in B_0(e) \mbox{ and }g\in H(f).
\end{equation} Recall that
\begin{equation}\label{equ.4.25}
\norm{v(\cdot)-\bar{u}_g(0,\cdot)}_{L^\infty}<\frac{\epsilon_0}{2},\quad\mbox{for
all }(v,g)\in\mathcal{O}(u_0,g_0).\end{equation}
It then follows from (\ref{equ.4.27})-(\ref{equ.4.25}) and (A) that
\begin{eqnarray*}
|av(x)|&\le&|av(x)-a\bar{u}_g(0,x)|+|a\bar{u}_g(0,x)|
\\&\le&|v(a^{-1}x)-\bar{u}_g(0,a^{-1}x)|+|a\bar{u}_g(0,x)-\bar{u}_g(0,x)|+
|\bar{u}_g(0,x)|\\&<&\frac{\epsilon_0}{2}+\frac{\epsilon_0}{4}+\frac{\epsilon_0}{4}=\epsilon_0,
\end{eqnarray*} for all $a\in B_0(e)$, $x\in\RR^n\backslash B_R$ and $(v,g)\in\mathcal{O}(u_0,g_0)$. That is,
\begin{equation}\tag{{\bf B}} |av(x)|<\epsilon_0~\mbox{ for all }
a\in B_0(e),\,x\in\RR^n\backslash B_R\mbox{ and
}(v,g)\in\mathcal{O}(u_0,g_0).
\end{equation}

Noticing that $(v(x),g)\leq
(\bar{u}_g(0,x),g),~v(x)\not\equiv \bar{u}_g(0,x)$ for $x\in\RR^n$
and $(v,g)\in\mathcal{O}(u_0,g_0),$  the
strong maximum principle yields that
$$(u(t,x,v,g),g\cdot t)<(\bar{u}_{g\cdot t}(0,x),g\cdot t),\quad\forall  x\in\RR^n,\,(v,g)\in\mathcal{O}(u_0,g_0),\,t>0.$$
So, by the invariance of $\mathcal{O}(u_0,g_0)$, we obtain that
$(v(x),g)< (\bar{u}_g(0,x),g)$, for $x\in\RR^n$ and
$(v,g)\in\mathcal{O}(u_0,g_0)$. Since $\mathcal{O}(u_0,g_0)$ is
compact in $X\times H(f)$, the continuity of $\bar{u}_g(0,\cdot)$ on
$g$ implies that there is an $\tilde{\epsilon}>0$ such that
$$(v(x),g)<(\bar{u}_g(0,x)-\tilde{\epsilon},g),\quad\mbox{for all }x\in\overline{B_R}\mbox{ and }(v,g)\in\mathcal{O}(u_0,g_0).$$
As a consequence, there exists a smaller neighborhood $B(e)\subset
B_0(e)$ of $e$ such that
\begin{equation}\tag{{\bf C}}(av(x),g)<(\bar{u}_g(0,x),g)\quad\mbox{for all
}\,a\in B(e),\,x\in\overline{B_R}~\mbox{ and
}(v,g)\in\mathcal{O}(u_0,g_0).\end{equation}  Note also that
$\bar{u}_g(0,\cdot)-av(\cdot)\geq
\bar{u}_g(0,\cdot)-a\bar{u}_g(0,\cdot)$, for all
$(v,g)\in\mathcal{O}(u_0,g_0)$. Then one further obtains that
\begin{equation}\tag{{\bf D}}
\liminf\limits_{|x|\rightarrow\infty}(\bar{u}_g(0,x)-av(x))
\geq\liminf\limits_{|x|\rightarrow\infty}(\bar{u}_g(0,x)-a\bar{u}_g(0,x))=0
\end{equation} for all $(v,g)\in\mathcal{O}(u_0,g_0)$ and $a\in B(e)$.

Now we claim that the Proposition follows immediately from (A)-(D).
Indeed, for any $(v,g)\in\mathcal{O}(u_0,g_0)$ and $\tau>0$, one can
find some $(v_{-\tau},g_{-\tau})\in\mathcal{O}(u_0,g_0)$ such that
$\Pi_\tau(v_{-\tau},g_{-\tau})=(v,g).$ Then for any $a\in B(e)$, by
(A)-(D) and the invariance of $\mathcal{O}(u_0,g_0)$, we have that
\begin{equation*}\begin{aligned}{\rm (i)}\quad &|\bar{u}_g(t,x)|<\epsilon_0,~\mbox{ for all
}x\in\RR^n\backslash B_R, \,g\in H(f)\mbox{ and }t\in\R,\\
{\rm (ii)}\quad
&|au(t,x,v_{-\tau},g_{-\tau})|<\epsilon_0,\quad\mbox{ for all
}t>0\mbox{ and
}x\in\RR^n\backslash B_R,\\
{\rm (iii)}\quad
&au(t,x,v_{-\tau},g_{-\tau})<\bar{u}_{g_{-\tau}\cdot
t}(0,x),\quad\mbox{ for all }t>0\mbox{ and
}x\in\partial B_R,\\
{\rm (iv)}\quad
&\liminf\limits_{|x|\rightarrow\infty}(\bar{u}_{g_{-\tau}\cdot
t}(0,x)-au(t,x,v_{-\tau},g_{-\tau}))\geq 0, \quad\mbox{ for all
}t>0.\end{aligned}
\end{equation*} Therefore, Lemma \ref{lemma 4.9} below implies that
\begin{equation*}\bar{u}_{g_{-\tau}\cdot t}(0,x)-au(t,x,v_{-\tau},g_{-\tau})=\bar{u}_{g_{-\tau}\cdot t}(0,x)-u(t,x,av_{-\tau},g_{-\tau})\geq-2\epsilon_0 e^{-\alpha t}
\end{equation*} for all $x\in\RR^n\backslash B_R$ and
$t>0$. In particular (let $t=\tau$),
\begin{equation*}\bar{u}_{g_{-\tau}\cdot \tau}(0,x)-au(\tau,x,v_{-\tau},g_{-\tau})\geq-2\epsilon_0 e^{-\alpha \tau},\quad\mbox{for all }x\in\RR^n\backslash B_R,
\end{equation*} and hence
\begin{equation*}\bar{u}_g(0,x)-av(x)\geq-2\epsilon_0 e^{-\alpha \tau},\quad\mbox{for all
}x\in\RR^n\backslash B_R.
\end{equation*} Since $\tau>0$ is arbitrarily chosen, by letting
$\tau\rightarrow\infty$ we have $\bar{u}_g(0,x)\geq av(x)$, for all
$x\in\RR^n\backslash B_R$, $(v,g)\in\mathcal{O}(u_0,g_0)$ and $a\in
B(e)$. Combining with (C), we have completed the proof.
\end{proof}

\begin{lemma}\label{lemma 4.9} Let $\alpha,\,\epsilon_0,\,R_0$ be such that {\rm (f 3)} holds. Let $R\geq R_0$ be such that
$$|\bar{u}_g(t,x)|<\epsilon_0, \quad\mbox{ for all
}x\in\RR^n\backslash B_R,~g\in H(f)\mbox{ and }t\in\R.$$ Let also
$u(t,x,v_0,g)$ be a solution of ($\ref{IVP-sys}_g$) satisfying
\begin{equation*}|u(t,x,v_0,g)|<\epsilon_0,\quad\forall
t>0,~x\in\RR^n\backslash B_R.
\end{equation*}  Assume that
$$\bar{u}_g(t,x)\geq u(t,x,v_0,g),\quad\mbox{for }x\in\partial B_R,\,t>0$$
and
$$\liminf\limits_{|x|\rightarrow\infty}(\bar{u}_g(t,x)- u(t,x,v_0,g))\geq0,\quad\forall t>0.$$
Then
$$\bar{u}_g(t,x)- u(t,x,v_0,g)\geq-2\epsilon_0 e^{-\alpha t}\quad\mbox{for all }x\in\RR^n\backslash B_R\mbox{ and }t>0.$$
\end{lemma}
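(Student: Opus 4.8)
The plan is to reduce the statement to a comparison (maximum) principle for a linear parabolic equation on the exterior domain $\{|x|>R\}$. Set $w(t,x):=\bar{u}_g(t,x)-u(t,x,v_0,g)$. Subtracting the equation for $u(\cdot,\cdot,v_0,g)$ from that for $\bar{u}_g$ and using the fundamental theorem of calculus in the $u$-variable, $w$ solves the linear equation
\[
\partial_t w=\Delta w+c(t,x)\,w,\qquad c(t,x):=\int_0^1\frac{\partial g}{\partial u}\bigl(t,x,\,(1-s)u(t,x,v_0,g)+s\bar{u}_g(t,x)\bigr)\,ds,
\]
valid for all $x\in\RR^n$ and $t>0$. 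For $|x|\ge R\,(\ge R_0)$ and $t>0$ the hypotheses give $|\bar{u}_g(t,x)|<\epsilon_0$ and $|u(t,x,v_0,g)|<\epsilon_0$, so the argument $(1-s)u+s\bar{u}_g$ of $\partial g/\partial u$ has modulus $<\epsilon_0$; hence (f 3) yields $c(t,x)\le-\alpha$ on $\{|x|\ge R\}\times(0,\infty)$.

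Next I would introduce the barrier $\psi(t,x):=w(t,x)+2\epsilon_0 e^{-\alpha t}$ and observe that on $\{|x|>R\}\times(0,\infty)$ it is a supersolution of the same operator: a direct computation gives
\[
\partial_t\psi-\Delta\psi-c\,\psi=-2\epsilon_0 e^{-\alpha t}\,(c+\alpha)\ge 0,
\]
since $c+\alpha\le 0$. On the lateral boundary $|x|=R$ one has $\psi=w+2\epsilon_0 e^{-\alpha t}\ge 2\epsilon_0 e^{-\alpha t}>0$ because $w\ge0$ there by hypothesis, and $\liminf_{|x|\to\infty}\psi(t,\cdot)\ge 2\epsilon_0 e^{-\alpha t}>0$ for every $t>0$ by the $\liminf$ hypothesis; moreover $\psi$ is bounded on $\{|x|\ge R\}\times[\tau,T]$ for any $0<\tau<T<\infty$, since $\bar{u}_g$ is bounded and $t\mapsto u(t,\cdot,v_0,g)$ is a continuous curve in $C_{\rm unif}(\RR^n)$.

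Then I would fix $0<\tau<T<\infty$ and run the parabolic maximum principle on $\{|x|>R\}\times(\tau,T]$. Since $|\bar{u}_g|<\epsilon_0$ and $|u(\cdot,\cdot,v_0,g)|<\epsilon_0$ on this set, $w(\tau,\cdot)>-2\epsilon_0$, hence $\psi(\tau,\cdot)>-2\epsilon_0(1-e^{-\alpha\tau})=:-M_\tau$; since $M_\tau\ge0$ and $c\le0$, the constant $-M_\tau$ is a subsolution. Comparing $\psi$ with $-M_\tau$ — with $\psi>-M_\tau$ on the bottom $t=\tau$, $\psi>0\ge-M_\tau$ on the lateral boundary, and $\liminf_{|x|\to\infty}\psi\ge0\ge-M_\tau$ — and using the bound on $\psi$ to rule out escape to spatial infinity (a standard Phragm\'en--Lindel\"of barrier $\eta(|x|^2+2nt)$ does this, the term $c\le-\alpha<0$ being exactly what excludes a negative interior minimum), one gets $\psi\ge-M_\tau$ on $\{|x|\ge R\}\times[\tau,T]$. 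Finally, for a fixed $(t,x)$ with $t>0$ and $x\in\RR^n\setminus B_R$, choosing $\tau<t<T$ and letting $\tau\to0^+$ (so $M_\tau\to0$) yields $\psi(t,x)\ge0$, i.e. $\bar{u}_g(t,x)-u(t,x,v_0,g)\ge-2\epsilon_0 e^{-\alpha t}$, which is the claim. (Alternatively, by continuity of the solutions up to $t=0$ one may argue directly on $(0,T]$ with $\psi(0,\cdot)\ge0$.)

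I expect the only delicate point to be the last step: making the maximum principle rigorous on the \emph{unbounded} domain $\{|x|>R\}$, where one needs the $\liminf$ hypothesis together with the uniform bound on $w$ to guarantee (via the barrier) that the minimum of the perturbed function is attained at a finite point and hence on the parabolic boundary. Everything else — the derivation of the linear equation for $w$, the sign of the coefficient $c$, and the supersolution identity for $\psi$ — is routine.
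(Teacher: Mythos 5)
Your proposal is correct and follows essentially the same route as the paper: write $w=\bar{u}_g-u$, derive the linear equation $\partial_t w=\Delta w+\xi w$ with $\xi\le-\alpha$ on $\{|x|\ge R\}$ via (f 3), and compare $w$ with the subsolution $-2\epsilon_0 e^{-\alpha t}$ (the paper invokes the comparison theorem directly where you spell out the Phragm\'en--Lindel\"of justification on the unbounded domain, and it handles $t=0$ by continuity of $w(0,\cdot)=\bar u_g(0,\cdot)-v_0$ where you take $\tau\to0^+$).
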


\begin{proof} The proof is similar as \cite[Lemma 5.9]{OM1}, we here give the detail for completeness. For any $g\in H(f)$, the function $w(t,x)=\bar{u}_g(t,x)-
u(t,x,v_0,g)$ is a solution of the linear parabolic equation
\begin{equation}\label{equ.4.22} \dfrac{\partial w}{\partial t}= \Delta
w +\xi(t,x)w,
\quad x\in \RR^n\backslash\overline{B_R}, \, t>0
\end{equation} under the boundary condition $w=\bar{u}_g- u\ge 0 $ on $\partial B_R$,
where
$$\xi(t,x)=\int_0^1g'_u(t,x,\theta\bar{u}_g(t,x)+(1-\theta)u(t,x,v_0,g))d\theta.$$  In view of our assumptions, it is easy to see that
$$|\theta\bar{u}_g(t,x)+(1-\theta)u(t,x,v_0,g)|<\epsilon_0\quad\mbox{for all }x\in\RR^n\backslash B_R\mbox{ and }t>0.$$
Since $g\in H(f)$ satisfies (f 3), we have
\begin{equation*} \xi(t,x)\leq -\alpha\quad\mbox{for all }x\in\RR^n\backslash B_R\mbox{ and }t>0.
\end{equation*} Let $\tilde{r}(t)=-2\epsilon_0 e^{-\alpha t}$.
Then $$\frac{\partial \tilde{r}}{\partial
t}\le \Delta
\tilde{r}+\xi(t,x)\tilde{r},\quad x\in
\RR^n\backslash\overline{B_R}, \, t>0.$$ Clearly, $\tilde{r}(t)< 0\le w(t,x)$ on $\partial B_R$.  Moreover,
\begin{equation*} \tilde{r}(0)=-2\epsilon_0\leq
\bar{u}_g(0,x)-v_0(x)=w(0,x),\quad\mbox{for
}x\in\RR^n\backslash B_R,
\end{equation*} and
$\tilde{r}(t)<0\leq\liminf\limits_{|x|\rightarrow\infty}w(t,x)$ for all $t>0.$
Then it follows from the comparison theorem
that $$\tilde{r}(t)\leq w(t,x)\quad\mbox{for all
}x\in\RR^n\backslash B_R\mbox{ and }t>0,$$ which
completes the proof.
\end{proof}

\begin{proposition}\label{compactness-or}
Let $\epsilon_0$ be given in {\rm (f 3)}.  Then, for any solution
$u(t,x,v_0,g)$ of {\rm ($\ref{IVP-sys}_g$)} satisfying
\begin{equation}\label{2}
\sup_{t\ge
0}\norm{u(t,\cdot,v_0,g)-\bar{u}_g(t,\cdot)}_{L^\infty}<\dfrac{\epsilon_0}{4},
\end{equation} the
forward orbit $O^+(v_0,g)$ is relatively compact in $X$.
\end{proposition}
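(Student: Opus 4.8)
The plan is to establish three facts — global existence together with an $L^\infty$ bound, uniform equicontinuity, and an exponential spatial-tail estimate — and then to patch them into total boundedness of $O^+(v_0,g)$ in $X=C_{\mathrm{unif}}(\RR^n)$. \emph{Boundedness and global existence.} Since $E=\{\bar u_g(0,\cdot):g\in H(f)\}$ is compact (it is the continuous image of the compact space $H(f)$), the family $\{\bar u_g(t,\cdot):g\in H(f),\ t\in\RR\}\subset E$ is bounded in $X$; hence \eqref{2} shows that $u(t,\cdot,v_0,g)$ stays in a fixed bounded subset of $X$, which precludes blow-up, so the solution is defined for all $t\ge 0$ and $O^+(v_0,g)$ is bounded. \emph{Equicontinuity.} Because $(\ref{IVP-sys}_g)$ has a translation-invariant principal part and coefficients bounded uniformly in $(t,x)$, interior parabolic estimates upgrade this $L^\infty$ bound, for $t\ge 1$, to a bound of $u(t,\cdot,v_0,g)$ in $C^{1+\theta}$ that is uniform in $t$ and in $x$; thus $\{u(t,\cdot,v_0,g):t\ge 1\}$ is uniformly equicontinuous on $\RR^n$.

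The crucial step is the \emph{tail estimate}, and it is where (f 3) enters. Set $w(t,x)=u(t,x,v_0,g)-\bar u_g(t,x)$, which, exactly as in Lemma~\ref{lemma 4.9}, solves $w_t=\Delta w+\xi(t,x)w$ with $\xi(t,x)=\int_0^1 g'_u\bigl(t,x,\theta\bar u_g(t,x)+(1-\theta)u(t,x,v_0,g)\bigr)\,d\theta$. Using Lemma~\ref{lemma 4.7}, choose $R\ge R_0$ so that $\abs{\bar u_g(t,x)}<\epsilon_0/4$ for $\abs{x}>R$, all $g\in H(f)$ and $t\in\RR$; together with \eqref{2}, the convex combinations entering $\xi$ then have modulus $<\epsilon_0/2<\epsilon_0$ on $\{\abs{x}>R\}$, so (f 3) gives $\xi(t,x)\le-\alpha$ there. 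I would then compare $w$ on $\{\abs{x}>R,\ t>0\}$ with the bounded functions $\pm\bar W$, where $\bar W(t,x)=\frac{\epsilon_0}{4}\bigl(e^{-\sqrt{\alpha}(\abs{x}-R)}+e^{-\alpha t}\bigr)$: a direct computation (using $\mu^2\le\alpha$ for the spatial exponent $\mu=\sqrt{\alpha}$) shows $\bar W_t\ge\Delta\bar W+\xi\bar W$ on $\{\abs{x}>R\}$, while \eqref{2} gives $\abs{w}<\epsilon_0/4\le\bar W$ on the parabolic boundary $\{t=0\}\cup\{\abs{x}=R\}$. Since $\xi\le0$ there and all functions are bounded, the comparison principle for bounded solutions (as already used in Lemma~\ref{lemma 4.9}) yields
\begin{equation*}
\abs{u(t,x,v_0,g)-\bar u_g(t,x)}\le \frac{\epsilon_0}{4}\bigl(e^{-\sqrt{\alpha}(\abs{x}-R)}+e^{-\alpha t}\bigr),\qquad \abs{x}\ge R,\ t\ge 0.
\end{equation*}

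Finally I would assemble relative compactness. Given $\eta>0$: the set $\{u(t,\cdot,v_0,g):t\in[0,T]\}$ is compact (continuous image of $[0,T]$ under the semiflow) for each $T$, hence covered by finitely many $\eta$-balls. Choose $T$ and $R'>R$ with $\frac{\epsilon_0}{4}e^{-\alpha T}<\eta$ and $\frac{\epsilon_0}{4}e^{-\sqrt{\alpha}(R'-R)}<\eta$; then for $t\ge T$ the restrictions $u(t,\cdot,v_0,g)\big|_{\overline{B_{R'}}}$ form a bounded, equicontinuous family on the \emph{compact} set $\overline{B_{R'}}$, hence are totally bounded in $C(\overline{B_{R'}})$, while the tail estimate forces $u(t,\cdot,v_0,g)$ to be within $2\eta$ of $\bar u_g(t,\cdot)\in E$ outside $B_{R'}$, and $E$ is totally bounded in $X$. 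Blending a finite $\eta$-net on $\overline{B_{R'}}$ with a finite $\eta$-net of $E$ (via a fixed partition of unity across $\partial B_{R'}$) produces a finite $C\eta$-net for $\{u(t,\cdot,v_0,g):t\ge T\}$; with the $[0,T]$ part this shows $O^+(v_0,g)$ is totally bounded, hence relatively compact in $X$. I expect the main obstacle to be exactly this last patching: uniform equicontinuity and boundedness do \emph{not} imply sup-norm precompactness on $\RR^n$, so the exponential tail control furnished by (f 3) is indispensable, and one must be slightly careful to turn the interior and exterior estimates into a genuine finite net of $X$.
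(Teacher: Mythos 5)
Your proof is correct, and its overall strategy is the same as the paper's: use (f 3) to control $u(t,\cdot,v_0,g)-\bar u_g(t,\cdot)$ on the exterior region $\{\abs{x}>R\}$ by a barrier that decays jointly as $t\to\infty$ and $\abs{x}\to\infty$, then combine this tail control with interior parabolic estimates and Arzel\`a--Ascoli to obtain precompactness in the sup norm. The genuine difference is in how the barrier is built. The paper stays with the nonlinear equation: it constructs super-/subsolutions $\bar u_g\pm\phi^{\pm}$, where $\phi^{\pm}$ solve the auxiliary exterior problem $\phi^{\pm}_t=\Delta\phi^{\pm}-\alpha\phi^{\pm}$ with constant data $\pm 3\epsilon_0/4$, and it quotes \cite{Ma85} for the joint decay \eqref{7} of $\phi^{\pm}$. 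You instead linearize, writing $w=u-\bar u_g$ as a solution of $w_t=\Delta w+\xi w$ with $\xi\le-\alpha$ for $\abs{x}>R$ (your verification that the convex combinations stay below $\epsilon_0$ in modulus is exactly right), and compare with the explicit barrier $\tfrac{\epsilon_0}{4}\bigl(e^{-\sqrt{\alpha}(\abs{x}-R)}+e^{-\alpha t}\bigr)$; since $\Delta e^{-\sqrt{\alpha}(\abs{x}-R)}=\bigl(\alpha-\tfrac{(n-1)\sqrt{\alpha}}{\abs{x}}\bigr)e^{-\sqrt{\alpha}(\abs{x}-R)}\le\alpha\,e^{-\sqrt{\alpha}(\abs{x}-R)}$, this is indeed a supersolution, and the parabolic-boundary data are dominated because $\abs{w}<\epsilon_0/4$ by \eqref{2}. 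This buys an explicit, quantitative decay rate and makes the step self-contained (no external citation); the one point you should state explicitly is that the comparison on the unbounded exterior domain is the Phragm\'en--Lindel\"of maximum principle for \emph{bounded} solutions with zeroth-order coefficient bounded above, which is the same form already implicit in Lemma \ref{lemma 4.9}. Your final total-boundedness patching (finite net on $\overline{B_{R'}}$ glued across an annulus to a finite net of the compact set $E$) is a more explicit rendering of the paper's appeal to \eqref{7} together with Arzel\`a--Ascoli, and is sound; as you note, the gluing should be done over an annulus on which both the interior net and the tail estimate apply.
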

\begin{proof}
Since
\begin{equation}\label{1}\sup_{t\in \RR}|\bar{u}_g(t,x)|\to 0~\mbox{
as }~|x|\rightarrow+\infty,\end{equation} let $R>R_0$ be such that
$\sup\limits_{t\in \RR}|\bar{u}_g(t,x)|\leq \epsilon_*$ for
$x\in\R^n\backslash B_{R}$, where $B_{R}=\{x\in \R^n:|x|<R\}$ and
$\epsilon_*=\frac{\epsilon_0}{4}$. In view of (\ref{2}), it yields
that \begin{equation}\label{3} |u(t,x,v_0,g)|\leq 2\epsilon_*\mbox{
for all }t\geq0 \mbox{ and } x\in \R^n\backslash B_{R}.
\end{equation}
Furthermore, $u(t,x,v_0,g)$ satisfies the initial boundary value
problem
\begin{equation}\label{IBVP-sys}
\left\{\begin{split} &\dfrac{\partial w}{\partial t}= \Delta w
+g(t,x,w), \quad x\in \R^n\backslash \overline{B_{R}},
\, t>0,\\
&w=\,u,\quad\quad \quad\quad x\in \p B_{R}, \, t>0,
\\
&w(0,x)=v_{0}(x), \quad\quad\quad x\in \R^n\backslash B_{R}.
 \end{split}
 \right.
\end{equation}
Now let $\phi^+$ satisfies
\begin{equation*}
\left\{\begin{split} &\dfrac{\partial \phi^+}{\partial t}= \Delta
\phi^+ -\alpha\phi^+, \quad x\in \R^n\backslash \overline{B_{R}},
\, t>0,\\
&\phi^+=\,3\epsilon_*,\quad\quad \quad\quad x\in \p B_{R}, \, t>0,
\\
&\phi^+(0,x)=3\epsilon_*, \quad\quad\quad x\in \R^n\backslash B_{R}.
 \end{split}
 \right.
\end{equation*}
Then $\hat{u}:=\bar{u}_g+\phi^+$ satisfies
\begin{equation*}\label{5}
\left\{\begin{split} &\dfrac{\partial u}{\partial t}= \Delta u
+g(t,x,\bar{u}_g)-\alpha\phi^+, \quad x\in \R^n\backslash
\overline{B_{R}},
\, t>0,\\
&u=\,3\epsilon_*+\bar{u}_g,\quad\quad \quad\quad x\in \p B_{R}, \,
t>0,
\\
&u(0,x)=3\epsilon_*+\bar{u}_g(0,x), \quad\quad\quad x\in
\R^n\backslash B_{R}.
 \end{split}
 \right.
\end{equation*}
Note that \begin{equation}\label{6}
 g(t,x,\hat{u})-g(t,x,\bar{u}_g)+\alpha\phi^+=[\int_0^1\frac{\p g}{\p u}(t,x,\bar{u}_g+\theta\phi^+)d\theta+\alpha]\cdot\phi^+.
\end{equation}
Since $|\bar{u}_g(t,x)|\leq\epsilon_*$ and
$|\theta\phi^+|\leq|\phi^+|\leq3\epsilon_*$ on $\R^n\backslash
B_{R}$, one has $|\bar{u}_g+\theta\phi^+|\leq \epsilon_0$. Thus by
(f 3) (with $f$ replaced by $g$), $\int_0^1\frac{\p g}{\p
u}(t,x,\bar{u}_g+\theta\phi^+)d\theta\leq -\alpha$. Note also that
$\phi^+>0$ on $\R^n\backslash\overline{B_R}$. It follows from
(\ref{6}) that $g(t,x,\hat{u})\leq g(t,x,\bar{u}_g)-\alpha\phi^+,$
which implies that
\begin{equation*}
\left\{\begin{split} &\dfrac{\partial \hat{u}}{\partial t}\geq
\Delta \hat{u} +g(t,x,\hat{u}), \quad x\in \R^n\backslash
\overline{B_{R}},
\, t>0,\\
&\hat{u}=\,3\epsilon_*+\bar{u}_g\geq2\epsilon_*,\quad\quad
\quad\quad x\in \p B_{R}, \, t>0,
\\
&\hat{u}(0,x)=3\epsilon_*+\bar{u}_g(0,x)\geq2\epsilon_*,
\quad\quad\quad x\in \R^n\backslash B_{R}.
 \end{split}
 \right.
\end{equation*} Combined with (\ref{3}) and (\ref{IBVP-sys}), the
comparison principle implies that
$$u(t,x,v_0,g)\leq \bar{u}_g(t,x)+\phi^+(t,x),~~\forall t\geq0,x\in\R^n\backslash B_{R}.$$
Similarly, we can construct $\phi^-$ satisfying
\begin{equation*}
\left\{\begin{split} &\dfrac{\partial \phi^-}{\partial t}= \Delta
\phi^- -\alpha\phi^-, \quad x\in \R^n\backslash \overline{B_{R}},
\, t>0,\\
&\phi^-=\,-3\epsilon_*,\quad\quad \quad\quad x\in \p B_{R}, \, t>0,
\\
&\phi^-(0,x)=-3\epsilon_*, \quad\quad\quad x\in \R^n\backslash
B_{R}.
 \end{split}
 \right.
\end{equation*} and obtain that
$$u(t,x,v_0,g)\geq \bar{u}_g(t,x)+\phi^-(t,x),~~\forall t\geq0,x\in\R^n\backslash B_{R}.$$
A direct estimate yields that (see \cite[P.94]{Ma85})
$$\lim_{\substack{t\rightarrow+\infty\\|x|\rightarrow+\infty}}\phi^{\pm}(t,x)=0,$$
which implies that
\begin{equation}\label{7}\lim_{\substack{t\rightarrow+\infty\\|x|\rightarrow+\infty}}|u(t,x,v_0,g)-\bar{u}_g(t,x)|=0.
\end{equation}

In order to prove the relative compactness of
$\{u(t,\cdot,v_0,g)\}_{t\in[0,\infty)}$ in $X$, we note that, by
(\ref{2})-(\ref{1}), $u(t,x,v_0,g)$ is a bounded solution of
($\ref{IVP-sys}_g$) in $X$. Then the standard parabolic estimate
shows that  $u(t,\cdot,v_0,g)$ is bounded in $C^{2}_{loc}(\R^n)$.
Combining (\ref{1}), (\ref{7}) and the Arzel\`{a}-Ascoli Theorem, we
 obtain the relative compactness of
$\{u(t,\cdot,v_0,g)\}_{t\in[0,\infty)}$ in $X$.
\end{proof}

\subsection{Traveling waves}
In this subsection, we will utilize the abstract results in Section 3 to investigate the monotonicity
of stable traveling waves for time-almost periodic reaction-diffusion equations with bistable nonlinearities.
Our aim is to study such kind of problems from a general point of view. As a simple illustrated example, we
consider the following time-almost periodic reaction-diffusion equation of the form:
\begin{equation}\label{equ.4.1}
\frac{\partial u}{\partial t}=\frac{\partial^2 u}{\partial
z^2}+f(t,u),\qquad z\in\mathbb{R},\,t>0,
\end{equation}
where the nonlinearity
$f(t,u):\mathbb{R}\times\mathbb{R}\rightarrow\mathbb{R}$ is a
$C^1$-admissible and uniformly almost periodic in $t$, real-valued
function. Of course, we remark that our approach for \eqref{equ.4.1} here can be applicable,
    with little modification, to monotonicity
of stable traveling waves for other various types of equations (see,
e.g. \cite{OM1,OM2}) with bistable nonlinearities.

A solution $u(z,t)$ of (\ref{equ.4.1}) is called an {\it almost
periodic traveling wave} (see, e.g. \cite[Section 2.2]{Sh1}), if
there are $\phi\in C^1(\R\times\R,\R)$ and $c\in C^1(\R,\R)$ such
that
$$u(z,t)=\phi(z-c(t),t),$$ where $\phi(x,t)$ (called {\it the wave
profile}) is almost periodic in $t$ uniformly with respect to $x$ in
bounded sets, and $c'(t)$ (called {\it the wave speed}) is almost periodic
in $t$;  and moreover, the frequency modules
$$\mathcal{M}(\phi(x,\cdot)),\quad\mathcal{M}(c'(\cdot))\subset \mathcal{M}(f).$$
We restrict our attention to traveling waves satisfying the connecting condition
\begin{equation*}\lim\limits_{x\rightarrow\pm\infty}\phi(x,t)=u^{f}_{\pm}(t),\qquad\mbox{
uniformly for } t\in\mathbb{R},
\end{equation*} where $u^{f}_{\pm}(t)$ are spatially homogeneous
time-almost periodic solutions of (\ref{equ.4.1}) with
$\mathcal{M}(u^{f}_{\pm}(\cdot))\subset \mathcal{M}(f).$ A traveling
wave is called a solitary wave if $u^f_+(t)=u^f_-(t)$ for all $t\in
\RR$, a traveling front if $u^f_-(t)<u^f_+(t)$ for all $t\in \RR$,
or $u^f_-(t)>u^f_+(t)$ for all $t\in \RR$.

In what follows we assume that

\vskip 2mm
{\bf (F)} ~~there exist an $\epsilon_0>0$ and a $\mu>0$ such that
$$\dfrac{\p f}{\p u}(t,u)\leq-\mu, \qquad \mbox{ for
}|u-u^{f}_{\pm}(t)|<\epsilon_0 ~\mbox{ and } t\in\mathbb{R}.
$$
\vskip 2mm

Let $X=C_{unif}(\mathbb{R})$ denote the space of bounded and
uniformly continuous functions on $\mathbb{R}$ endowed with the
$L^{\infty}(\mathbb{R})$ topology. For any $u_0\in X$, let
$u(\cdot,t;u_0,f)$ be the solution of (\ref{equ.4.1}) with
$u(\cdot,0;u_0,f)=u_0$.

A traveling wave $\phi(z-c(t),t)$ of (\ref{equ.4.1}) is called {\it
uniformly stable} if for every $\varepsilon>0$ there is a
$\delta(\varepsilon)>0$ such that, for every $u_0\in X$, if $s\geq0$
and
$\norm{u(\cdot,s;u_0,f)-\phi(\cdot-c(s),s)}_{L^\infty}\leq\delta(\varepsilon)$
then
$$\norm{u(\cdot,t;u_0,f)-\phi(\cdot-c(t),t)}_{L^\infty}<\varepsilon~\mbox{ for each }~t\geq s.$$
Moreover, $\phi(z-c(t),t)$ is called {\it uniformly stable with asymptotic phase} if it is
uniformly stable and there exists a $\delta>0$ such that if
$\norm{u_0-\phi(\cdot-c(0),0)}_{L^\infty}<\delta$ then
$$\norm{u(\cdot,t;u_0,f)-\phi(\cdot-c(t)-\sigma,t)}_{L^\infty}\rightarrow0~\mbox{ as }~t\rightarrow\infty$$
for some $\sigma\in\mathbb{R}$. A traveling wave $\phi(z-c(t),t)$ is
called {\it spatially monotone} if $\phi(x,t)$ is a non-decreasing or non-increasing function of $x$ for every $t\in\mathbb{R}$.

Based on our main abstract results, Theorems B and D, in Section 3, we derive the following results:
\vskip 2mm

\begin{theorem}\label{theorem 4.1} Any uniformly stable traveling
wave of \eqref{equ.4.1} is spatially monotone. In particular, solitary waves of \eqref{equ.4.1} are not uniformly stable.
\end{theorem}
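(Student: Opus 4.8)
\noindent\textbf{Proof plan for Theorem \ref{theorem 4.1}.} The plan is to recast the travelling wave as a uniformly stable $1$-cover of a monotone skew-product semiflow carrying a translation action, and then read off spatial monotonicity from the dichotomy of Theorem B. First I would pass to the moving coordinate $x=z-c(t)$: writing $u(x,t)=w(x+c(t),t)$ turns \eqref{equ.4.1} into $u_t=u_{xx}+c'(t)u_x+f(t,u)$, and since $\mathcal{M}(c'(\cdot))\subset\mathcal{M}(f)$ there is a continuous $c^*:H(f)\to\RR$ with $c^*(f\cdot t)=c'(t)$, so the family of hull equations $u_t=u_{xx}+c^*(g\cdot t)u_x+g(t,u)$, $g\in\Omega:=H(f)$, generates a (local) skew-product semiflow $\Pi_t$ on $X\times\Omega$, $X=C_{\mathrm{unif}}(\RR)$. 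Let $G=\RR$ act on $X$ by translations $(a\cdot u)(x)=u(x-a)$. Then (A1) holds since $f$ is uniformly almost periodic so $\Omega$ is minimal; (A2) holds since $G$ is connected, locally compact, and translations are order-preserving homeomorphisms of $X$; (A3) holds because the coefficients of the moving-frame equation do not depend on $x$, so translations commute with $\Pi_t$; monotonicity of $\Pi_t$ is the parabolic comparison principle on $\RR$ (the first-order term $c^*u_x$ is harmless). The profile gives $E=\mathrm{cl}\{\phi(\cdot,t):t\in\RR\}\subset X$, and exactly as in the remark preceding Lemma \ref{lemma 4.7}, uniform stability of the wave makes $K:=\{(\bar u_g,g):g\in\Omega\}$ with $\bar u_{f\cdot t}=\phi(\cdot,t)$ a uniformly stable $1$-cover of $\Omega$; the change of frame is an $L^\infty$-isometry, so uniform stability transfers intact.

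The main work, and the part I expect to be the principal obstacle, is to verify (A4), which is where the bistable hypothesis (F) enters. Since $\phi(x,t)\to u^f_\pm(t)$ uniformly in $t$ and $\partial_uf\le-\mu$ near $u^f_\pm$, I would run the exponentially-decaying super/sub-solution comparison of Lemma \ref{lemma 4.9} and Proposition \ref{compactness-or} on $\RR\setminus\overline{B_R}$, with $B_R=(-R,R)$ and $R$ large enough that every solution under consideration stays in the $\mu$-contractive zone outside $B_R$. This yields: (i) any forward orbit starting in a small $\delta$-neighborhood $B_\delta(K)$ stays uniformly close to $u^f_\pm$ near $x=\pm\infty$, hence, together with the interior $C^2_{loc}$ parabolic estimate and Arzel\`{a}--Ascoli, is relatively compact in $X$; and (ii) if $\mathcal{O}(u_0,g_0)\subset B_\delta(K)$ and $\mathcal{O}(u_0,g_0)\prec hK$, then applying the strong maximum principle on each bounded region $\overline{B_R}$ together with the decay estimate at infinity (as in Proposition \ref{lemma 4.8}) produces a uniform gap that survives translation by all $a$ in a neighborhood of $h$, giving $\mathcal{O}(u_0,g_0)\prec ghK$ for $g$ near $e$; the opposite inequalities are symmetric. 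Thus (A1)--(A4) hold.

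Now Theorem B applies. In alternative (i), $GK=K$ forces $g\bar u_\omega=\bar u_\omega$ for every translation $g$ and every $\omega$, i.e.\ $\phi(\cdot,t)$ is constant in $x$ for every $t$, which is (trivially) spatially monotone. In alternative (ii), for each fixed $\omega$ (equivalently each fixed $t$) the group orbit $G\bar u_\omega=\{\phi(\cdot-a,t):a\in\RR\}$ coincides with $H(\omega,\RR)$ and is therefore totally ordered in the pointwise order. I would then invoke the elementary fact that a uniformly continuous function $\psi$ all of whose translates are pairwise comparable is monotone: the sets $D=\{h\ge 0:\psi(\cdot+h)\ge\psi\}$ and $I=\{h\ge 0:\psi(\cdot+h)\le\psi\}$ are closed sub-semigroups of $[0,\infty)$ with $D\cup I=[0,\infty)$; by connectedness $D\cap I\ne\emptyset$ unless one of them is all of $[0,\infty)$, and $D\cap I\ne\emptyset$ forces (iterating the resulting periodicity) $\psi$ constant, so in every case $\psi$ is monotone. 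Applying this with $\psi=\phi(\cdot,t)$ shows $\phi(\cdot,t)$ is spatially monotone, which completes the proof of the first assertion.

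For the last assertion, suppose $\phi(z-c(t),t)$ is a uniformly stable solitary wave, so $u^f_+(t)=u^f_-(t)$ and hence $\phi(x,t)\to u^f_+(t)$ as $x\to\pm\infty$. By the first part $\phi(\cdot,t)$ is spatially monotone with equal limits at $\pm\infty$, so $\phi(x,t)\equiv u^f_+(t)$ for all $x$; that is, the only solitary waves that can be uniformly stable are the spatially constant steady states, and no genuine (non-constant in $x$) solitary wave of \eqref{equ.4.1} is uniformly stable. $\hfill\square$
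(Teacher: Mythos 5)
Your proposal is correct and follows essentially the same route as the paper: pass to the moving frame, build the skew-product semiflow over the hull with the translation group $G$ acting on $X=C_{\mathrm{unif}}(\RR)$, verify (A1)--(A4) (with (A4) resting on the bistable condition (F) via the exponentially decaying comparison outside a large ball, exactly as in the paper's Propositions \ref{compactness-or} and \ref{lemma 4.4}), and read off monotonicity of $\phi(\cdot,t)$ from the total ordering of the fibre $G[\phi(\cdot,t)]$ given by Theorem B. The only cosmetic differences are that you base the flow on $H(f)$ with an induced speed function $c^*$ rather than on the joint hull $H(c',f)$ (equivalent by the module containment $\mathcal{M}(c')\subset\mathcal{M}(f)$), and that you spell out the elementary step ``all translates pairwise comparable $\Rightarrow$ monotone,'' which the paper leaves implicit.
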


\begin{theorem}\label{theorem 4.2} Any uniformly stable
traveling wave of (\ref{equ.4.1}) is uniformly stable with
asymptotic phase.
\end{theorem}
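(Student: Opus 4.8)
\noindent\textbf{Proof proposal for Theorem \ref{theorem 4.2}.}

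The plan is to recast \eqref{equ.4.1} as a monotone skew-product semiflow under the translation group $G=\RR$ and then apply Theorem D. Passing to the moving coordinate $x=z-c(t)$, the function $w(x,t)=u(x+c(t),t)$ solves $w_t=w_{xx}+c'(t)w_x+f(t,w)$, and the wave profile $\phi$ is precisely the solution of this equation issuing from $\phi(\cdot,0)$. Since $c'$ is almost periodic with $\mathcal M(c')\subset\mathcal M(f)$, the hull $\Omega:=H((f,c'))$ is a minimal almost periodic flow (in fact isomorphic to $H(f)$); for each $\omega\in\Omega$ one has limit functions $f_\omega(t,v)$ and $\gamma_\omega(t)$, the original equation being recovered at the base point over $f$, and the family $v_t=v_{xx}+\gamma_\omega(t)v_x+f_\omega(t,v)$ generates a skew-product semiflow $\Pi_t(v_0,\omega)=(v(t,\cdot;v_0,\omega),\omega\cdot t)$ on $X\times\Omega$, $X=C_{\mathrm{unif}}(\RR)$. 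Because the coefficients do not depend on $x$, the translation action $(a\cdot v)(\cdot)=v(\cdot-a)$ of $G=\RR$ is order-preserving and commutes with $\Pi_t$; $\Pi_t$ is monotone by the parabolic comparison principle; $\Omega$ is minimal; and $G$ is connected and locally compact. Hence (A1)--(A3) and the standing hypotheses on $G$ in Theorems B and D hold, and $K:=\mathrm{cl}\{(\phi(\cdot,t),(f,c')\cdot t):t\in\RR\}$ is a compact invariant set which, by the assumed uniform stability of the wave (together with $\mathcal M(\phi(x,\cdot))\subset\mathcal M(f)$, exactly as for the set $E$ in the rotational-symmetry case), is a uniformly stable $1$-cover of $\Omega$ in the sense of Definition \ref{stability}.

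The technical core is the verification of (A4) for $K$, and this proceeds as in Propositions \ref{lemma 4.8} and \ref{compactness-or}, with the bistable condition {\bf (F)} playing the role of {\bf (f3)} and the spatially homogeneous solutions $u^f_\pm$ playing the role of $0$. For (A4)(i), a solution starting in $B_\delta(K)$ stays, by {\bf (F)}, uniformly close to $u^f_\pm(t)$ for $|x|$ large; together with interior parabolic estimates this controls the oscillation of $v(t,\cdot;v_0,\omega)$ near $x=\pm\infty$ uniformly in $t$, so that $O^+(v_0,\omega)$ is relatively compact in $X$ by the Arzel\`a--Ascoli theorem. For (A4)(ii), one confines the relevant solutions to the region $\{\partial_u f\le-\mu\}$ outside a large interval (using {\bf (F)} and the smallness of the quantities involved), applies the strong maximum principle on the complementary bounded interval, and then propagates the strict ordering to all of $\RR$ via a comparison argument of the type of Lemma \ref{lemma 4.9} (with $u^f_\pm$ in place of $0$); this shows that $\mathcal O(v_0,\omega)\prec hK$ (resp.\ $\succ hK$) is preserved under all sufficiently small translations. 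I expect this step --- the only place where the bistable structure {\bf (F)} is genuinely used --- to be the main obstacle; everything else is a dictionary between the PDE and the abstract framework.

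With (A1)--(A4) in hand I would argue $GK\ne K$ and conclude via Theorem D. By Theorem \ref{theorem 4.1} a uniformly stable traveling wave cannot be a solitary wave, so $u^f_-\not\equiv u^f_+$ and hence each profile $\bar u_\omega$ is nonconstant in $x$; thus $G\bar u_\omega\ne\{\bar u_\omega\}$ and $GK\ne K$. Theorem D then yields a $\delta_*>0$ such that every $(v_0,\omega)\in B_{\delta_*}(K)$ has $\mathcal O(v_0,\omega)=hK$ for some $h\in\RR$ and $\norm{v(t,\cdot;v_0,\omega)-h\bar u_{\omega\cdot t}}_{L^\infty}\to0$ as $t\to\infty$. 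Taking $\omega$ to be the base point over $f$ and undoing the change of variables --- so that $v_0(x)=u_0(x+c(0))$, $\bar u_{\omega\cdot t}(\cdot)=\phi(\cdot,t)$, $h\bar u_{\omega\cdot t}(\cdot)=\phi(\cdot-h,t)$ and $v(t,x;v_0,\omega)=u(x+c(t),t;u_0,f)$ --- the inequality $\norm{u_0-\phi(\cdot-c(0),0)}_{L^\infty}<\delta_*$ forces $\norm{u(\cdot,t;u_0,f)-\phi(\cdot-c(t)-\sigma,t)}_{L^\infty}\to0$ with $\sigma=h$. Since the wave is uniformly stable by hypothesis, this is precisely the assertion that it is uniformly stable with asymptotic phase. $\square$
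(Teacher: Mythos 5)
Your proposal follows essentially the same route as the paper: pass to the moving frame, build the skew-product semiflow over the hull $H(c',f)$, act by the translation group, verify (A1)--(A4) via analogues of Propositions \ref{lemma 4.8}, \ref{lemma 4.9} and \ref{compactness-or} adapted to the bistable condition {\bf (F)} and the limits $u^f_{\pm}$, and then invoke Theorem D. The only point where you go beyond the paper's write-up is in explicitly checking the hypothesis $GK\ne K$ of Theorem D (the paper leaves this implicit), which is a welcome addition.
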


\begin{remark}
\textnormal{A converse result to Theorem \ref{theorem 4.1}, i.e., spatially monotone time-almost periodic traveling waves are
uniformly stable, was first obtained by Shen \cite{Sh1}. In \cite{Sh2,Sh3}, she further proved
 the existence of such traveling wave. The same result as Theorem \ref{theorem 4.2} can also be found in
Shen \cite{Sh1}. Note that our approach (Theorem D) was introduced in a very general framework,
 and hence, it can be applied to wider classes of equations
with little modification.}
\end{remark}

\noindent {\it Proof of Theorems \ref{theorem 4.1} and \ref{theorem 4.2}}. We first
rewrite equation (\ref{equ.4.1}) with the moving
coordinate $x=z-c(t)$:
\begin{equation}\label{equ.4.2}
\frac{\partial u}{\partial t}=\frac{\partial^2 u}{\partial
x^2}+c'(t)\frac{\partial u}{\partial x}+f(t,u),\qquad\qquad
x\in\mathbb{R},\,t>0.
\end{equation} Obviously, $\phi(z-c(t),t)$ is an almost
periodic traveling wave of (\ref{equ.4.1}) if and only if
$\phi(x,t)$ is an almost periodic entire solution of
(\ref{equ.4.2}) satisfying $\mathcal{M}(\phi(x,\cdot))\subset
\mathcal{M}(f)$.  In the following, we rewrite $\phi(x,t)$ as $\phi^{y_0}(x,t)$, with $y_0=(c^\prime,f)$,
for the sake of completeness. Therefore, it is easy to see that
\begin{equation}\label{equ.4.3}\lim\limits_{x\rightarrow\pm\infty}\phi^{y_0}(x,t)=u^{f}_{\pm}(t),\qquad\mbox{
uniformly in } t\in\mathbb{R}.
\end{equation}

Let $Y=H(c',f)$ be the hull of the function $y_0=(c',f)$. By the
standard theory of reaction-diffusion systems (see, e.g.
\cite{Fri,Hen}), it follows that for every $v_0\in X$ and
$y=(d,g)\in Y$, the system
\begin{equation}\tag{\ref{equ.4.2}$_y$}
\frac{\partial u}{\partial t}=\frac{\partial^2 u}{\partial
x^2}+d(t)\frac{\partial u}{\partial x}+g(t,u),\qquad\qquad
x\in\mathbb{R},\,t>0
\end{equation} admits a (locally) unique regular solution $v(\cdot,t;v_0,y)$ in
$X$ with $v(\cdot,0;v_0,y)=v_0$. This solution also continuously
depends on $y\in Y$ and $v_0\in X$ (see, e.g. \cite[Sec.3.4]{Hen}).
Therefore, (\ref{equ.4.2}$_y$) induces a (local) skew-product
semiflow $\Pi$ on $X\times Y$ with
$$\Pi_t(v_0,y)=(v(\cdot,t;v_0,y),y\cdot t),\quad\forall (v_0,y)\in X\times Y,\,t\geq0.$$
We define an order relation in $X$ by $$u\leq v~\mbox{ if }~u(x)\leq
v(x),~\forall x\in\mathbb{R}.$$ Let
$G=\{a_\sigma:\sigma\in\mathbb{R}\}$ be the group of translations
\begin{equation*}a_\sigma:u(\cdot)\mapsto u(\cdot-\sigma)
\end{equation*} acting on the space $X$. Then (A1)-(A3) are fulfilled.

Note that $\phi^{y_0}(x,t)$ is an uniformly almost periodic solution of
(\ref{equ.4.2}) with $\mathcal{M}(\phi^{y_0}(x,\cdot))\subset
\mathcal{M}(f)=\mathcal{M}(y_0)$. So, the closure $K$ of the orbit
$\{(\phi^{y_0}(\cdot,t),y_0\cdot t):t\in \RR\}$ of $\Pi_t$ is
 a uniformly stable 1-cover of $Y$. As a consequence, $K$ can be written as
 $$K=\{(\phi^y(\cdot,0),y)\in X\times Y:y=(d,g)\in Y\},$$ where the map $y\mapsto \phi^y(\cdot,0)\in X$
 is continuous and satisfies $\phi^{y_0}(\cdot,t)=\phi(\cdot,t)$ and
 $\phi^{y\cdot
t}(\cdot,0)=\phi^y(\cdot,t)$ for all $y\in Y$ and $t\in \RR.$ By virtue of \eqref{equ.4.3}, it is not difficult to see that
\begin{equation}\label{equ.4.3*}\lim\limits_{x\rightarrow\pm\infty}\phi^{y}(x,t)=u^{g}_{\pm}(t),\quad\mbox{
uniformly for } y=(d,g)\in Y\mbox{ and } t\in\mathbb{R},
\end{equation}
where $\{(u_{\pm}^g(0),g)\in \mathbb{R}\times H(f):g\in H(f)\}$ is a
1-cover of $H(f)$ and satisfies
$u_{\pm}^{g\cdot t}(0)=u_{\pm}^g(t)$ for all $g\in H(f)$ and $t\in \RR.$
 Of course, one can also easily see that, for any
$g\in H(f)$, the function-pair $(g,u^{g}_{\pm}(t))$ also satisfies
the condition (F), i.e.,
\vskip 2mm
{\bf (F)$_g$:} ~~there exist an $\epsilon_0>0$ and a $\mu>0$ such that
$$\dfrac{\p g}{\p u}(t,u)\leq-\mu, \qquad \mbox{ for
}|u-u^{g}_{\pm}(t)|<\epsilon_0 ~\mbox{ and } t\in\mathbb{R}.
$$
\vskip 2mm

In order to apply Theorems B and D in Section 3, we have to check (A4) there. By virtue of \eqref{equ.4.3*}
 and the condition (F)$_g$ above,
(A4-i) can be shown by repeating an analogue of Proposition \ref{compactness-or}, with $\bar{u}_g$ replaced by
$\phi^y-u^{g}_{\pm}$ (see also the similar arguments in \cite[Lemma 5.6]{OM1}). We omit the detail here.

As for (A4-ii), we will deduce it from Proposition \ref{lemma 4.4}
below. Based on this, we can apply Theorem B to obtain that the
group orbit $GK$ of $K$ is a $1$-D subbundle of $X\times Y$. In
particular, fix $y_0\cdot t\in Y$, the fibre
$$GK_{y_0\cdot t}=G[\phi^{y_0\cdot t}(x,0)]=G[\phi^{y_0}(x,t)]=G[\phi(x,t)]=\{\phi(x-\sigma,t):\sigma\in \RR\}$$ is totally-ordered,
which implies that $\phi(x,t)$ is monotone in $x$ for every $t\in \RR$. Furthermore, it follows from
Theorem D that the traveling wave $\phi(z-c(t),t)$ is uniformly stable with
asymptotic phase. This completes the proof of Theorems \ref{theorem 4.1} and \ref{theorem 4.2}.
$\qquad\qquad\qquad\qquad\qquad\qquad\qquad\qquad\qquad\qquad\qquad\qquad\qquad\qquad\square$

\begin{proposition}\label{lemma 4.4}
Let $\epsilon_0$ be given in {\rm (F)}.
For $(u_0,y_0)\in X\times Y$, suppose
that the omega limit set $\mathcal{O}(u_0,y_0)$ exists and satisfies
\begin{equation}\label{linf-close}\norm{v(\cdot)-\phi^y(\cdot,0)}_{L^\infty}<\frac{\epsilon_0}{2}\quad\mbox{ for
all }(v,y)\in\mathcal{O}(u_0,y_0),\end{equation} as well as
\begin{equation}\label{h-com-s}
(v(x),y)\leq (\phi^y(x-h,0),y),~v(x)\not\equiv
\phi^y(x-h,0),\quad\forall
(v,y)\in\mathcal{O}(u_0,y_0),\,x\in\mathbb{R},\end{equation} for some
$h\in\mathbb{R}$. Then there exists some $\delta>0$ such that
$$(v(x),y)\leq (\phi^y(x-h-\sigma,0),y),~v(x)\not\equiv
\phi^y(x-h-\sigma,0)$$ for all $(v,y)\in\mathcal{O}(u_0,y_0)$,
$x\in\mathbb{R}$ and $|\sigma|<\delta$. The assertion remains true
if the inequality sign $\leq$ is replaced by $\geq$.
\end{proposition}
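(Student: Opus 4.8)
\noindent The plan is to adapt the argument behind Proposition \ref{lemma 4.8} (see also \cite[Lemma 5.8]{OM1,OM2}): first upgrade the weak inequality \eqref{h-com-s} to a \emph{strict} one on a large compact set, then absorb a small translation $\sigma$ by that gap in the bounded region where the wave ``transits'', and finally control the tails $|x|\to\infty$ by exploiting the dissipativity (F)$_g$ via an exterior-domain comparison in the spirit of Lemma \ref{lemma 4.9}. I only discuss the ``$\le$'' case; the ``$\ge$'' case is symmetric.

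First I would fix $\mu,\epsilon_0$ as in (F) and, using \eqref{equ.4.3*}, pick $R_0>0$ independent of $y=(d,g)\in Y$ with $|\phi^y(x,0)-u^g_{\pm}(0)|<\epsilon_0/4$ for $\pm x\ge R_0$; then set $R:=R_0+|h|+1$, so that for $|x|\ge R$ and $|\sigma|\le 1$ both $\phi^y(x-h,0)$ and $\phi^y(x-h-\sigma,0)$, as well as (by \eqref{linf-close}) every $v(x)$ with $(v,y)\in\mathcal{O}(u_0,y_0)$, lie within $\epsilon_0$ of $u^g_{\pm}(0)$; since $\mathcal{O}(u_0,y_0)$ is totally invariant the same stays true along $u(t,\cdot;v,y)$ for all $t\ge0$. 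I would also record that $\{\phi^y(\cdot,0):y\in Y\}$, being a compact subset of $C_{{\rm unif}}(\RR)$, is uniformly equicontinuous, so for any $\eta>0$ there is $\delta(\eta)>0$ with $\norm{\phi^y(\cdot-\sigma,0)-\phi^y(\cdot,0)}_{L^\infty}<\eta$ for all $|\sigma|<\delta(\eta)$ and all $y\in Y$.

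The heart of the proof is the strict-ordering upgrade. Given $(v,y)\in\mathcal{O}(u_0,y_0)$ and $\tau>0$, use total invariance to write $\Pi_\tau(v_{-\tau},y_{-\tau})=(v,y)$ with $(v_{-\tau},y_{-\tau})\in\mathcal{O}(u_0,y_0)$; then $w(t,x):=\phi^{y_{-\tau}\cdot t}(x-h,0)-u(t,x;v_{-\tau},y_{-\tau})$ is a nonnegative solution of a linear parabolic equation on $\RR$ (note $\phi^{y_{-\tau}\cdot t}(\cdot-h,0)=u(t,\cdot;\phi^{y_{-\tau}}(\cdot-h,0),y_{-\tau})$ by the $x$-translation invariance of $(\ref{equ.4.2}_y)$), and it is not identically zero, for otherwise $v\equiv\phi^y(\cdot-h,0)$, contradicting \eqref{h-com-s}. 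The strong maximum principle then gives $w(t,\cdot)>0$ for $t>0$, and evaluating at $t=\tau$ yields $(v(x),y)<(\phi^y(x-h,0),y)$ for all $x\in\RR$ and all $(v,y)\in\mathcal{O}(u_0,y_0)$. By compactness of $\mathcal{O}(u_0,y_0)$ and continuity of $y\mapsto\phi^y(\cdot,0)$ there is $\tilde\epsilon>0$ with $v(x)\le\phi^y(x-h,0)-\tilde\epsilon$ for all $|x|\le R$ and $(v,y)\in\mathcal{O}(u_0,y_0)$. I would then \emph{fix} $\delta:=\min\{1,\delta(\tilde\epsilon)\}$. For $|\sigma|<\delta$ and $|x|\le R$, combining the last estimate with equicontinuity gives $v(x)\le\phi^y(x-h,0)-\tilde\epsilon<\phi^y(x-h-\sigma,0)$; in particular $v\not\equiv\phi^y(\cdot-h-\sigma,0)$.

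It remains to get $v(x)\le\phi^y(x-h-\sigma,0)$ for $|x|\ge R$, which is where the exterior comparison of Lemma \ref{lemma 4.9} enters (the extra drift $d(t)\partial_x$ in $(\ref{equ.4.2}_y)$ is harmless since the barrier depends on $t$ only). Fix $(v,y)\in\mathcal{O}(u_0,y_0)$, $|\sigma|<\delta$, $\tau>0$ and $(v_{-\tau},y_{-\tau})$ as above, and set $w(t,x):=\phi^{y_{-\tau}\cdot t}(x-h-\sigma,0)-u(t,x;v_{-\tau},y_{-\tau})$ on $\{|x|>R,\,t>0\}$. Then $w$ solves a linear equation whose zeroth-order coefficient is $\le-\mu$ by (F)$_g$ and the tail bounds; on $|x|=R$ the compact-set estimate applied to $u(t,\cdot;v_{-\tau},y_{-\tau})\in\mathcal{O}(u_0,y_0)$ together with equicontinuity gives $w(t,\pm R)>0$ for $t>0$; $w(0,x)\ge-2\epsilon_0$ for $|x|\ge R$ by the tail bounds; and since \eqref{h-com-s} gives $u(t,\cdot;v_{-\tau},y_{-\tau})\le\phi^{y_{-\tau}\cdot t}(\cdot-h,0)$ while $\phi^{y_{-\tau}\cdot t}(x-h-\sigma,0)-\phi^{y_{-\tau}\cdot t}(x-h,0)\to0$ as $|x|\to\infty$, one gets $\liminf_{|x|\to\infty}w(t,x)\ge0$. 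Comparing $w$ with the subsolution $\tilde r(t)=-2\epsilon_0 e^{-\mu t}$ yields $w(t,x)\ge-2\epsilon_0 e^{-\mu t}$ for $|x|\ge R$, $t>0$; taking $t=\tau$ and letting $\tau\to\infty$ gives $v(x)\le\phi^y(x-h-\sigma,0)$ for $|x|\ge R$. Together with the compact-region estimate this proves the proposition. I expect the main obstacle to be the bookkeeping of the two small parameters — the gap $\tilde\epsilon$ coming from compactness plus the strong maximum principle, and the translation radius $\delta$ coming from equicontinuity — which must be selected in the right order (namely $\delta$ \emph{after} $\tilde\epsilon$) so that the boundary inequality $w(t,\pm R)>0$ survives the shift by $\sigma$.
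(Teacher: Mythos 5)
Your proposal is correct and follows essentially the same route as the paper's proof: tail estimates from \eqref{equ.4.3*} plus \eqref{linf-close} to control the region $|x|>R$, the strong maximum principle and compactness of $\mathcal{O}(u_0,y_0)$ to produce a uniform gap $\tilde\epsilon$ on $|x|\le R$ that absorbs small translations $\sigma$, and the exterior comparison with the barrier $-2\epsilon_0 e^{-\mu t}$ (the analogue of Lemma \ref{lemma 4.9}) followed by $t=\tau$, $\tau\to\infty$ to close the tails. The only differences are cosmetic (you build the shift tolerance into the choice of $R$ rather than invoking continuity of the translation action separately, and you spell out the order in which $\tilde\epsilon$ and $\delta$ are chosen), so no further comment is needed.
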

\begin{proof} We use the similar arguments in Proposition \ref{lemma 4.8}.
Let $\mu$, $\epsilon_0$ be such that (F) holds.  By
\eqref{equ.4.3*}, we have
$$\lim\limits_{x\rightarrow\pm\infty}\phi^{y}(x-h,0)=\lim\limits_{x\rightarrow\pm\infty}\phi^{y}(x,0)=u^{g}_{\pm}(0),\qquad\mbox{
uniformly for } y=(d,g)\in Y.$$ Thus there exist some $R',~R''>0$
such that
\begin{equation}\label{equ.4.11}|\phi^{y}(x,0)-u^{g}_{\pm}(0)|<\frac{\epsilon_0}{2}\quad\mbox{ for all
}|x|>R'\mbox{ and }y\in Y,
\end{equation}
as well as
\begin{equation}\label{equ.4.12}|\phi^{y}(x-h,0)-u^{g}_{\pm}(0)|<\frac{\epsilon_0}{2}\quad\mbox{ for all
}|x|>R''\mbox{ and }y\in Y.\end{equation} Let $R=\max\{R',R''\}$, In view of \eqref{linf-close}, it follows from
(\ref{equ.4.11}) that
\begin{equation}\tag{{\bf A$'$}}|v(x)-u^{g}_{\pm}(0)|<\epsilon_0\quad\mbox{ for all
}(v,y)\in\mathcal{O}(u_0,y_0)\mbox{ and }|x|>R.\end{equation}
Moreover, combined with (\ref{equ.4.12}), the continuity of the
translation-group action on $X$ implies that
there exists a $\delta_0>0$ such that if
$|\sigma|<\delta_0$ then
\begin{equation}\tag{{\bf B$'$}}|\phi^{y}(x-h-\sigma,0)-u^{g}_{\pm}(0)|<\epsilon_0,\quad\mbox{
for all }|x|>R\mbox{ and }y\in Y.\end{equation}

Due to the assumption \eqref{h-com-s},
the strong maximum
principle yields that
$$(v(x,t;v,y),y\cdot t)< (\phi^{y\cdot t}(x-h,0),y\cdot t),\quad\forall
(v,y)\in\mathcal{O}(u_0,y_0),\,x\in\mathbb{R},\,t>0.$$ By virtue of the
invariance of $\mathcal{O}(u_0,y_0)$, we get that
$$(v(x),y)< (\phi^y(x-h,0),y),\quad\forall
(v,y)\in\mathcal{O}(u_0,y_0),\,x\in\mathbb{R}.$$ Since $\mathcal{O}(u_0,g_0)$
is compact in $X\times Y$, 
it follows from the continuity of $\phi^y(\cdot,0)$ on $y$ that for
a sufficiently small $\tilde{\epsilon}>0$,
$$(v(x),y)<(\phi^y(x-h,0)-\tilde{\epsilon},y)\quad\mbox{for all }(v,y)\in\mathcal{O}(u_0,y_0)\mbox{ and }|x|\leq R.$$
So one can find a $\delta>0$
$(\delta\leq\delta_0)$ such that if $|\sigma|<\delta$ then
\begin{equation}\tag{{\bf C$'$}}(v(x),y)<(\phi^y(x-h-\sigma,0),y)\quad\mbox{for all }(v,y)\in\mathcal{O}(u_0,y_0)\mbox{ and }|x|\leq R.
\end{equation}
Note also that
$\phi^{y}(x-h-\sigma,0)-v(x)\geq
\phi^{y}(x-h-\sigma,0)-\phi^{y}(x-h,0),\quad\forall
(v,y)\in\mathcal{O}(u_0,y_0),\,x\in\mathbb{R}.$ Then
\begin{equation}\tag{{\bf D$'$}}
\liminf\limits_{|x|\rightarrow\infty}(\phi^{y}(x-h-\sigma,0)-v(x))\geq\liminf\limits_{|x|\rightarrow\infty}(\phi^{y}(x-h-\sigma,0)-\phi^{y}(x-h,0))=0
\end{equation} for all $(v,y)\in\mathcal{O}(u_0,y_0)$ and $|\sigma|<\delta$.

Similarly as (A)-(D) in the proof of Proposition \ref{lemma 4.8}, we
can deduce from (A$'$)-(D$'$) that, for any
$(v,y)\in\mathcal{O}(u_0,y_0)$ and $\tau>0$, there exists some
$(v_{-\tau},y_{-\tau})\in\mathcal{O}(u_0,y_0)$ with
$\Pi_\tau(v_{-\tau},y_{-\tau})=(v,y).$ Moreover, for any
$|\sigma|<\delta$, the following statements hold true:
\begin{equation*}\begin{aligned}{\rm (i)}\quad &|v(x,t;v_{-\tau},y_{-\tau})-u^{g_{-\tau}\cdot t}_{\pm}(0)|<\epsilon_0\quad\mbox{ for all
}t>0\mbox{ and
}|x|>R,\\{\rm (ii)}\quad &|\phi^{y}(x-h-\sigma,t)-u^{g}_{\pm}(t)|<\epsilon_0\quad\mbox{
for all }|x|>R,~y\in Y\mbox{ and
}t\in\R^+,\\
{\rm (iii)}\quad &v(x,t;v_{-\tau},y_{-\tau})<\phi^{y_{-\tau}\cdot
t}(x-h-\sigma,0)\quad\mbox{ for all }t>0\mbox{ and }|x|\leq R,\mbox{
and }\\
{\rm (iv)}\quad &\liminf\limits_{|x|\rightarrow\infty}(\phi^{y_{-\tau}\cdot
t}(x-h-\sigma,0)-v(x,t;v_{-\tau},y_{-\tau}))\geq 0 \quad\mbox{ for
all }t>0.\end{aligned}
\end{equation*} Therefore, by using an analogue of the last paragraph in the proof of Proposition \ref{lemma 4.8} (The
proof of this modified version of Lemma \ref{lemma 4.9} is almost identical to that of Lemma \ref{lemma 4.9}),   we obtain that
\begin{equation*}\phi^{y_{-\tau}\cdot t}(x-h-\sigma,0)-v(x,t;v_{-\tau},y_{-\tau})\geq-2\epsilon_0 e^{-\mu t}\quad\mbox{for all }|x|>R\mbox{ and
}t>0.
\end{equation*}
In particular, by letting $t=\tau$,
\begin{equation*}\phi^{y}(x-h-\sigma,0)-v(x)=\phi^{y_{-\tau}\cdot \tau}(x-h-\sigma,0)-v(x,\tau;v_{-\tau},y_{-\tau})\geq-2\epsilon_0 e^{-\mu \tau},~\forall
~|x|>R.
\end{equation*}  Since $\tau>0$ is arbitrarily chosen, by letting
$\tau\rightarrow\infty$ we have that $$\phi^{y}(x-h-\sigma,0)\geq
v(x)$$ for all $|x|>R$, $(v,y)\in\mathcal{O}(u_0,y_0)$ and
$|\sigma|<\delta$. Note also (C$'$). We have proved the Proposition.
\end{proof}



\begin{thebibliography}{99}
\bibitem{BS} A. V. Babin and G. Sell, Attractors of non-autonomous
parabolic equations and their symmetry properties, {\it J.
Differential Equations} {\bf 160} (2000), 1-50.

\bibitem{CGW} F. Cao, M. Gyllenberg and Y. Wang, Asymptotic behavior of
comparable skew-product semiflows with applications, {\it Proc. London Math. Soc.}
{\bf  103} (2011), 271-293.

\bibitem{CH} R. G. Casten and C. J. Holland, Instability results for reaction
diffusion equations with Neumann boundary conditions, {\it J.
Differential Equations} {\bf 27} (1978), 266-273.

\bibitem{Fi} A. M. Fink, {\it Almost Periodic Differential
Equations}, Lecture Notes in Mathematics, vol. 377, Springer-Verlag,
Berlin, 1974.

\bibitem{Fri} A. Friedman, {\it Partial Differential Equations of Parabolic Type}, Prentice-Hall Inc., Englewood Cliffs,
New Jersey, 1964.

\bibitem{Hen} D. Henry, {\it Geometric Theory of Semilinear Parabolic
Equations}, Lecture Notes in Mathematics, vol. 840, Springer, New
York, 1981.

\bibitem{HP} P. Hess and P. Pol\'{a}\v{c}ik, Symmetry and converence properties for nonnegative
 solutions of nonautonomous reaction-diffusion problems, {\it Proc. Roy. Soc. Edingburgh Sect. A}
 {\bf 124} (1994), 573-587.

\bibitem{Hi} M. Hirsch, Differential equations and convergence
almost everywhere in strongly monotone flows, {\it Contemp. Math.}
{\bf 17}, Amer. Math. Soc., Providence, R. I., (1983), 267-285.



\bibitem{HS} M. Hirsch and H. Smith, Monotone dynamical systems,
{\it Handbook of Differential Equations: Ordinary Differential
Equations}, vol. 2, A. Canada, P. Drabek, A. Fonda (eds.), Elsevier,
2005, 239-357.

\bibitem{LW} Q. Liu and Y. Wang, Phase-translation group actions on
strongly monotone skew-product semiflows, {\it Trans. Amer. Math. Soc.}, in press.

\bibitem{Ma} H. Matano, Asymptotic behavior and stability of solutions of
semilinear diffusion equations, {\it Publ. RIMS, Kyoto Univ.} {\bf
15} (1979), 401-454.


\bibitem{Ma85} H. Matano, $L^\infty$ stability of an exponentially decreasing solution
of the problem $\triangle u+f(x,u)=0$ in $\RR^n$, {\it Japan J.
Appl. Math.} {\bf 2} (1985), 85-110.


\bibitem{MP} J. Mierczy\'{n}ski and P. Pol\'{a}\v{c}ik, Group actions on
strongly monotone dynamical systems, {\it Math. Ann.} {\bf 283}
(1989), 1-11.

\bibitem{MS} J. Mierczy\'{n}ski and W. Shen, {\it Spectral Theory for Random
and Nonautonomous Parabolic Equations and Applications}, Chapman
Hall/CRC Monographs and Surveys in Pure and Applied Mathematics,
vol. 139, London: Chapman and Hall, 2008.

\bibitem{MZ} D. Montgomery and L. Zippin, {\it Topological Transformation Groups}, Interscience Tracts in Pure
and Applied Mathematics 1, New York, Interscience, 1955.


\bibitem{NPY} W.-M. Ni, P. Pol\'{a}\v{c}ik and E. Yanagida, Monotonicity of stable solutions in shadow
systems, {\it Trans. Amer. Math. Soc.} {\bf 353} (2001), 5057-5069.

%

\bibitem{NOS} S. Novo, R. Obaya and A. Sanz, Stability and extensibility
results for abstract skew-product semiflows, {\it J. Differential
Equations} {\bf 235} (2007), 623-646.

\bibitem{OM1} T. Ogiwara and H. Matano, Stability analysis in
order-preserving systems in the presence of symmetry, {\it Proc.
Roy. Soc. Edinburgh} {\bf 129} (1999), 395-438.

\bibitem{OM2} T. Ogiwara and H. Matano, Monotonicity and convergence results
in order-preserving systems in the presence of symmetry, {\it
Discrete and Continuous Dynamical Systems} {\bf 5} (1999), 1-34.
%

\bibitem{P1} P. Pol\'{a}\v{c}ik,
Symmetry properties of positive solutions of  parabolic equations on
$R^N$:  I. Asymptotic symmetry for the Cauchy problem, {\it Comm.
Partial Differential Equations} {\bf 30} (2005), 1567-1593.

\bibitem{P2} P. Pol\'{a}\v{c}ik, Symmetry properties  of
positive solutions of  parabolic equations on $R^N$: II. Entire
solutions. {\it Comm. Partial Differential Equations} {\bf 31}
(2006), 1615-1638.

\bibitem{P3} P. Pol\'{a}\v{c}ik, Estimates of solutions and
asymptotic symmetry for parabolic equations on bounded domains, {\it
Arch. Rational Mech. Anal.} {\bf 183} (2007), 59-91.

\bibitem{P5} P. Pol\'{a}\v{c}ik, Threshold solutions and sharp transitions for nonautonomous parabolic equations on $\RR^n$, {\it
Arch. Rational Mech. Anal.} {\bf 199} (2011), 69-97.


\bibitem{P4} P. Pol\'{a}\v{c}ik, Parabolic  equations:  asymptotic
behavior  and  dynamics  on  invariant  manifolds (survey), {\it
Handbook on Dynamical Systems}, vol. 2, B. Fiedler (ed.), Amsterdam:
Elsevier, 2002, 835-883.



\bibitem{Se} G. Sell, {\it Topological Dynamics and Ordinary
Differential Equations}, Van Norstand Reinhold, London, 1971.

\bibitem{ShYi} W. Shen and Y. Yi, {\it Almost Automorphic and Almost
Periodic Dynamics in Skew-Product Semiflows}, Memoirs Amer. Math.
Soc. {\bf 136}, Providence, R. I., 1998.

\bibitem{Sh1} W. Shen, Traveling waves in time almost periodic structures
governed by bistable nonlinearities: I. Stability and Uniqueness,
{\it J. Differential Equations} {\bf 159} (1999), 1-54.

\bibitem{Sh2} W. Shen, Traveling waves in time almost periodic structures
governed by bistable nonlinearities: II. Existence, {\it J.
Differential Equations} {\bf 159} (1999), 55-101.

\bibitem{Sh3} W. Shen, Dynamical systems and traveling waves in almost periodic structures,
{\it J. Differential Equations} {\bf 169} (2001), 493-548.

\bibitem{Ta1} P. Tak\'{a}\v{c}, Asymptotic behavior of strongly
monotone time-periodic dynamical process with symmetry, {\it J.
Differential Equations} {\bf 100} (1992), 355-378.

\bibitem{Ve} W. A. Veech, Almost automorphic functions on groups,
{\it Amer. J. Math.} {\bf 87} (1965), 719-751.

\bibitem{WY} Y. Wang, Asymptotic symmetry in strongly monotone skew-product
semiflows with applications, {\it Nonlinearity} {\bf 22} (2009),
765-782.

\end{thebibliography}
\end{document}